\pgfplotsset{compat=1.18}
\newtheorem{theorem}{Theorem}
\newtheorem{lemma}[theorem]{Lemma}
\newtheorem{proposition}[theorem]{Proposition}
\newtheorem{assumption}{Assumption}
\newtheorem{example}{Example}
\newcommand{\Z}{{\mathbb Z}}
\newcommand{\E}{{\mathbb E}}
\newcommand{\PP}{{\mathbb P}}
\newcommand{\WW}{{\mathbb W}}
\newcommand{\QQ}{{\mathbb Q}}
\newcommand{\EXP}{{\operatorname{exp}}}
\newcommand*\diff{\mathop{}\!\mathrm{d}}
\newcommand{\C}{\mathcal{C}}
\newcommand{\D}{\mathcal{D}}
\newcommand{\CD}{\operatorname{cl} \D}
\newcommand{\EM}{\textrm{EM}}
\newcommand{\SEM}{\textrm{SEM}}
\newcommand{\SEXP}{\textrm{EXP}}
\newcommand{\LTE}{\textrm{LTE}}
\title[]{Boundary-preserving weak Approximation for some semilinear stochastic partial differential equations}
\date{\today}
\author[]{Johan Ulander}
\begin{document}

\begin{abstract}
We propose and analyse a boundary-preserving numerical scheme for the weak approximation for some stochastic partial differential equations (SPDEs) with bounded state-space. We impose regularity assumptions on the drift and diffusion coefficients only locally on the state-space. In particular, the drift and diffusion coefficients may be non-globally Lipschitz continuous and superlinearly growing. The scheme consists of a finite difference discretisation in space and a Lie--Trotter time splitting followed by exact simulation and exact integration in time. The proposed scheme converges in the weak sense of order $1/4$ in time and of order $1/2$ in space, for globally Lipschitz continuous test functions. We prove the weak convergence order in time by proving strong convergence towards a strong solution driven by a different noise process. The convergence order in space follows from known results. The boundary-preserving property is ensured by the use of Lie--Trotter time splitting followed by exact simulation and exact integration. Numerical experiments confirm the theoretical results and demonstrate the practical advantages of the proposed Lie--Trotter-Exact (LTE) scheme compared to existing schemes for SPDEs.
\end{abstract}

\maketitle
\begin{sloppypar}
{\bf AMS Classification.} 60H15. 60H35. 65C30. 65J08.

\bigskip\noindent{\bf Keywords.} Stochastic partial differential equations, boundary-preserving numerical scheme, finite differences, time splitting scheme, exact simulation, weak convergence.


\section{Introduction}\label{sec:intro}

Stochastic differential equations (SDEs) arise in modelling various phenomena across scientific disciplines \cite{GraySIS, Karlin1981ASC,introTostocCalc,MR1214374,MR2001996}. Certain stochastic partial differential equations (SPDEs) take values only in a strict subset of the target-space, referred to as the invariant domain of the SPDE. For instance, the stochastic heat equation (SHE) with non-negative initial data should remain non-negative to be physically relevant. However, the SHEs with a globally Lipschitz source term and additive noise does not preserve non-negativity. 

Many SPDEs of interest result from adding noise to partial differential equations (PDEs) that take values only in a half-bounded or bounded domain. For instance, the solutions of the classical Allen--Cahn PDE take values only in $[-1,1]$. Introducing noise of the form $(1 - u^2) \diff W$ yields a stochastic Allen--Cahn equation whose solutions are also confined to $[-1,1]$. 

Classical numerical schemes for SPDEs are known to leave the invariant domains of SPDEs \cite{MR4729657}. In this work, we construct and analyse an implementable numerical scheme that preserves the invariant domain of the SPDE while converging in the weak sense to the mild solution.

We consider $(1+1)$-dimensional semilinear stochastic partial differential equations (SPDEs) of the form
\begin{equation}\label{intro:SPDE}
\left\lbrace
\begin{aligned}
& \diff u(t) = \left(\Delta u(t) + f(u(t)) \right) \diff t + g(u(t)) \diff W(t),\ (t,x) \in (0,T] \times [0,1], \\ 
& u(0,x) = u_{0}(x) \in \CD,\, x \in [0,1],
\end{aligned}
\right.
\end{equation}
where $\CD = [a,b]$ denotes the closure of some open and bounded domain $\D = (a,b) \subset \mathbb{R}$ and where $\diff W(t)$ is space-time white noise, subject to constant Dirichlet boundary conditions equal to a value in $\D$. The coefficient functions $f, g: \mathbb{R} \to \mathbb{R}$ are such that the mild solution $u(x,t) \in \CD$ for every $x \in [0,1]$ and $t \in [0,T]$, almost surely. Without loss of generality, we assume that $u$ satisfies homogeneous Dirichlet boundary conditions and that $0 \in \D$. See Section~\ref{sec:setting} for the precise definitions and assumptions. We remark that $f$ and $g$ are not assumed to be globally Lipschitz continuous. In fact, the numerical examples considered in this work (see Section~\ref{sec:num}) violate the global Lipschitz property in both $f$ and $g$. This lack of global Lipschitz continuity poses difficulties to approximate the solution of the SPDE in~\eqref{intro:SPDE}. Traditional SPDE theory requires $f$ and $g$ to be globally Lipschitz continuous to ensure well-posedness and convergence of classical numerical schemes. In fact, it was shown in \cite{MR2795791} that classical schemes may diverge in moments for SDEs violating the global Lipschitz property, and such SDEs may arise after discretising an SPDE in space.

A numerical scheme $\hat{u}_{m,n}$, for some indexing $m$ and $n$, for~\eqref{intro:SPDE} is called boundary-preserving if $\hat{u}_{m,n} \in \CD$ for all $m$ and $n$, almost surely. In many applications, the invariant domain of a model has physical significance, and numerical approximations that leave the invariant domain can be interpreted as non-physical. For example, population dynamics models \cite{GraySIS,KERMACK199133} and phase-separation dynamics models \cite{ALLEN19791085} are typically defined in $[0,1]$ and temperature models (e.g, the classical heat equation) are only physical in $[0,\infty)$. 

If a numerical approximation leaves the physical invariant domain of the model, then it can be influenced by forces that are not present in the model. To illustrate, considered the SPDE in~\eqref{intro:SPDE} with solutions confined to $\CD = [0,1]$. The solution remains unaffected by any modifications to the coefficient functions $f$ and $g$ outside $\CD=[0,1]$. However, modifying $f$ and $g$ outside $\CD = [0,1]$ can influence a numerical approximation that leaves the invariant domain $\CD = [0,1]$. Boundary-preserving numerical schemes are designed to avoid such discrepancies, ensuring that the numerical approximation remains in the relevant physical invariant domain.

The considered setting includes several important SPDE. For example, the stochastic Allen--Cahn equation \cite{Funaki1995TheSL} with $f(r) = r-r^3$ and $g(r) = 1 - r^2$, the stochastic Nagumo equation \cite{MCKEAN1970209} with $f(r)= r (1-r) (r - \gamma)$, for some $\gamma \in (0,1/2)$, and $g(r) = r (1-r)$, and the SIS SPDE with $f(r)= r ( 1-r)$ and $g(r) = r (1-r)$. For more details on these SPDEs and their properties, see Section~\ref{sec:num}, where we use them as test cases for the numerical experiments.

The proposed numerical scheme combines a finite difference discretisation in space with a Lie--Trotter splitting followed by exact integration and exact simulation in time. This ensures that the numerical scheme converges in the weak sense to the mild solution while preserving the invariant domain of the considered SPDE in~\eqref{intro:SPDE}. We abbreviate the scheme by $\LTE$ for \textbf{L}ie--\textbf{T}rotter and \textbf{E}xact integration and simulation.

The main results of the paper are the following:
\begin{itemize}
\item We propose a boundary-preserving numerical scheme for the weak approximation for SPDEs of the form~\eqref{intro:SPDE}.
\item We prove the boundary-preserving property and weak convergence of order $1/4$ in time for Lipschitz continuous test functions, see Proposition~\ref{prop:LTE-BP} and Theorem~\ref{th:mainWeak}.
\item We numerically verify the boundary-preserving property and weak convergence order of $1/4$ in time.
\end{itemize}
To the best of our knowledge, this is the first boundary-preserving numerical scheme for boundary-preserving SPDEs with coefficient functions that violate the global Lipschitz condition. 

The proof of weak convergence is based on establishing strong convergence to the mild solution of the SPDE~\eqref{intro:SPDE} with respect to a different driving noise. The change of the driving noise is necessary due to the use of exact simulation as part of the numerical time integration. For globally Lipschitz continuous test functions, this approach yields sharp order of weak convergence (see Section~\ref{sec:num}). It is important to note, however, that the proposed scheme does not convergence in the strong sense to the mild solution of the SPDE~\eqref{intro:SPDE}. This issue arises because of the use of exact simulation.

%
Before closing the introduction, we compare the proposed scheme to existing numerical schemes for similar problems with a focus on weakly convergent schemes and boundary-preserving schemes. The fully discrete numerical schemes consisting of FD approximation in space and EM, SEM, or exponential integrator approximation in time is known to convergence in the weak sense (in fact, the stronger property of convergence in probability) to the mild solution of~\eqref{intro:SPDE}. We refer to \cite{MR1644183,MR1699161} for the former and to \cite{MR4050540} for the latter. Although classical numerical schemes to approximate the solution of~\eqref{intro:SPDE} are known to convergence in the weak sense, such schemes are known to not be boundary-preserving \cite{MR4780408}. 

Boundary-preserving schemes for SDEs has been extensively studied over the last two decades, with approaches ranging from applying specific transforms (e.g., the Lamperti transform) to using truncation schemes. We mention the works \cite{MR4220738,MR4489741, MR4242953, MR2341800,MR3248050,artBar, MR4737060} as references to works on boundary-preserving schemes for SDEs. In contrast, boundary preservation for SPDEs has received significantly less attention. The first positivity-preserving scheme for some semilinear stochastic heat equations with space-dependent noise was proposed and studied in previous work by the author and collaborators in~\cite{MR4780408}. To the best of our knowledge, this is the second work on boundary-preserving numerical schemes for SPDEs and the first to treat SPDEs with bounded invariant domains, and allowing $f$ and $g$ to be superlinearly growing. 

This paper is organised as follows. In Section~\ref{sec:setting}, we specify the setting, assumptions, and the necessary preliminaries for later sections. Next, we describe the proposed numerical scheme, establish important properties  of it, and prove convergence in Section~\ref{sec:scheme}. Finally, we provide numerical experiments to support our theoretical findings in Section~\ref{sec:num}.

\section{Setting}\label{sec:setting}
In this section, we discuss the assumptions on the SPDE in~\eqref{intro:SPDE} that guarantees that the proposed scheme is well-defined, is boundary-preserving, and converges weakly to the mild solution. We consider a fixed probability space $(\Omega, \mathcal{F}, \mathbb{P})$ equipped with a filtration $\left( \mathcal{F}_{t} \right)_{t \in [0,T]}$ satisfying the usual conditions of completeness and right-continuity. We let $C$ denote a generic constant that may vary from line to line. To emphasise dependency on parameters $a_{1},\ldots,a_{j}$, with $j \in \mathbb{N}$, of interest, we use the notation $C(a_{1},\ldots,a_{j})$. The expectation operator is denoted by $\E$. We let $\C(\mathcal{A})$ denote the space of $\mathbb{R}$-valued continuous functions on $\mathcal{A}$ and we let $\C^{k}(\mathcal{A})$, with $k \in \mathbb{N}$, denote the space of $\mathbb{R}$-valued continuous functions on $\mathcal{A}$ with $k$ continuous derivatives. Most equalities and inequalities are to be interpreted in the almost sure sense, sometimes we omit to state this to avoid repetition.

We begin with describing the considered SPDEs and the assumptions imposed on the them in Section~\ref{sec:spde}. In this section, we also briefly discuss the boundary classifications of these SPDEs. Lastly, we provide the necessary background for the spatial and temporal discretisations, respectively. More precisely, the spatial discretisation using finite differences is described in Section~\ref{sec:spacedisc}, Section~\ref{sec:LTsplit} provides an overview of Lie--Trotter time splitting, and Section~\ref{sec:exactSim} is devoted to a construction of the exact simulation algorithm of SDEs.

\subsection{Description of the SPDEs}\label{sec:spde}
We start with discussing the necessary preliminaries and assumptions on the considered SPDE in~\eqref{intro:SPDE} for the construction and analysis of the proposed fully discrete numerical scheme.

We impose the following assumptions on the initial value. The regularity assumption on $u_{0}$ is the same as in \cite{MR4780408}.
\begin{assumption}\label{ass:u0}
The initial value $u_{0} \in \C^{3} \left( [0,1] \right)$, satisfies $u_{0}(x) \in [a,b]$ for every $x \in [0,1]$ and $u_{0}(0)=u_{0}(1)=0$.
\end{assumption}
Note that weaker regularity assumptions on $u_{0}$ could be used (but with lower convergence order in space), see for example \cite{MR4050540,MR1644183}. The condition $u_{0}(x) \in [a,b]$ for every $x \in [0,1]$ is needed for the mild solution $u$ to remain in $[a,b]$ for all times.

In the convergence proof, we need the drift and diffusion coefficients $f$ and $g$ to be locally Lipschitz functions and that the mild solution, the space-discrete FD solution and the fully discrete approximation all remain in the domain $\CD$ for all times. Moreover, we have to impose additional regularity assumptions on $f$ and $g$ to guarantee that exact simulation over time intervals of size $\Delta t$ is applicable. We use the result from \cite{MR2187299} to guarantee that exact simulation of the scalar time-homogeneous Itô-type SDEs
\begin{equation}\label{eq:SDE}
\left\lbrace
\begin{aligned}
& \diff X(t) = f(X(t)) \diff t + \lambda g(X(t)) \diff B(t),\ t \in (0,\Delta t], \\ 
& X(0) = x_{0} \in (a,b),
\end{aligned}
\right.
\end{equation}
for any $\lambda > 0$, on a small time interval $(0,\Delta t]$ is possible. Note that, by the assumptions on $f$ and $g$ below, the case $x_{0} \in \{a,b \}$ is trivial since then $X(t)=x_{0}$ for all times $t \in [0,T]$ (as $f$ and $g$ vanish on $\{ a,b \}$). In essence, the exact simulation algorithm requires that the Lamperti transform defined as
\begin{equation}\label{eq:LampTrans-sett}
\Phi(r) = \int_{r_{0}}^{r} \frac{1}{g(w)} \diff w,\ r \in (a,b),
\end{equation}
for any $r_{0} \in (a,b)$, is well-defined and that the following function
\begin{equation*}
\alpha(r) = \frac{f(\Phi^{-1}(r))}{g(\Phi^{-1}(r))} - \frac{1}{2} g'(\Phi^{-1}(r)),\ r \in \mathbb{R},
\end{equation*}
is well-defined and has certain regularity properties. Observe that the process defined by $Y(t) = \Phi \left( X(t) \right)$, by Itô's formula, then satisfies the following SDE with drift coefficient function $\alpha$ and with additive noise
\begin{equation}\label{eq:SDE-Lamp}
\left\lbrace
\begin{aligned}
& \diff Y(t) = \alpha(Y(t)) \diff t + \lambda \diff B(t),\ t \in (0,\Delta t], \\ 
& Y(0) = \Phi(x_{0}).
\end{aligned}
\right.
\end{equation}
We remark that we will transfer the constant diffusion coefficient $\lambda$ in~\eqref{eq:SDE-Lamp} to the drift coefficient by a deterministic time-changed. See Section~\ref{sec:exactSim} for more details on this. 

We impose the following conditions on $f$ and $g$ to guarantee that equations~\eqref{eq:SDE},~\eqref{eq:LampTrans-sett} and~\eqref{eq:SDE-Lamp} are well-defined.
\begin{assumption}\label{ass:f}
The drift coefficient $f \in \C^{2} \left([a,b]\right)$.
\end{assumption}

\begin{assumption}\label{ass:g}
The diffusion coefficient $g \in \C^{3} \left([a,b] \right)$ and is strictly positive on $(a,b)$,  and, for any $r_{0} \in (a,b)$, the following non-integrability conditions are satisfied
\begin{equation*}
\int_{r_{0}}^{a} \frac{1}{g(w)} \diff w = - \infty,\ \int_{r_{0}}^{b} \frac{1}{g(w)} \diff w = \infty.
\end{equation*}
\end{assumption}
Assumption~\ref{ass:g} is natural for any numerical scheme that utilises the Lamperti transform. Note that Assumption~\ref{ass:g} implies that $g(a)=g(b)=0$. We also remark that Assumption~\ref{ass:f} and Assumption~\ref{ass:g} only specifies $f$ and $g$ on $[a,b]$ and that the behaviour of $f$ and $g$ outside $[a,b]$ is of no importance. For instance, in Section~\ref{sec:num}, we provide numerical experiments for three SPDEs with superlinearly growing $f$ and $g$ that satisfy the assumptions in this section.

\begin{assumption}\label{ass:fg}
The drift coefficient $f$ decays at least as fast as the diffusion coefficient $g$ near the boundary points $\{a,b \}$; that is, the following limits exist and are finite
\begin{equation}\label{eq:limCond}
\left| \lim_{r \searrow a} \frac{f(r)}{g(r)} \right| + \left| \lim_{r \nearrow b} \frac{f(r)}{g(r)} \right| < \infty.
\end{equation}
\end{assumption}
Assumptions~\ref{ass:f},~\ref{ass:g} and~\ref{ass:fg} are used also in \cite{MR4737060} to guarantee that the Lamperti transform is well-defined and has desired properties. Note that Assumption~\ref{ass:fg} combined with Assumption~\ref{ass:g} implies that $f(a)=f(b)=0$. We also remark that Assumptions~\ref{ass:f},~\ref{ass:g} and~\ref{ass:fg} are sufficient, but possibly not necessary, assumptions.

The above assumptions implies regularity properties of $\Phi^{-1}$ and of $\alpha$ that we summarise in the following two propositions. The statements are taken from \cite{MR4737060} and we refer to \cite{MR4737060} for the proofs.
\begin{proposition}\label{prop:phiinv}
Suppose Assumpion~\ref{ass:g} is satisfied. Then $\Phi^{-1}: \mathbb{R} \to (a,b)$ is bounded, bijective, continuously differentiable and has bounded derivative. 
\end{proposition}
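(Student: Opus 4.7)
The plan is to unpack the definition of $\Phi$ and apply the inverse function theorem. Under Assumption~\ref{ass:g}, the function $w \mapsto 1/g(w)$ is continuous and strictly positive on $(a,b)$, so $\Phi$ is well-defined on $(a,b)$, continuously differentiable, and satisfies $\Phi'(r) = 1/g(r) > 0$. In particular, $\Phi$ is strictly increasing on $(a,b)$.

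The next step is to establish that $\Phi$ is a bijection from $(a,b)$ onto $\mathbb{R}$. The two non-integrability conditions in Assumption~\ref{ass:g} translate directly into the limits $\lim_{r \searrow a} \Phi(r) = -\infty$ and $\lim_{r \nearrow b} \Phi(r) = +\infty$. Combined with strict monotonicity and continuity, the intermediate value theorem gives $\Phi((a,b)) = \mathbb{R}$, and strict monotonicity gives injectivity, so $\Phi$ is a bijection and $\Phi^{-1} \colon \mathbb{R} \to (a,b)$ exists. Boundedness of $\Phi^{-1}$ then follows immediately from the fact that its range is contained in the bounded interval $(a,b)$.

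For the regularity of $\Phi^{-1}$, I would invoke the inverse function theorem: since $\Phi \in \C^{1}((a,b))$ with nowhere-vanishing derivative $\Phi' = 1/g > 0$, the inverse $\Phi^{-1}$ is continuously differentiable on $\mathbb{R}$ with
\begin{equation*}
(\Phi^{-1})'(y) = \frac{1}{\Phi'(\Phi^{-1}(y))} = g(\Phi^{-1}(y)), \quad y \in \mathbb{R}.
\end{equation*}
Because $g \in \C^{3}([a,b])$ is continuous on the compact set $[a,b]$, it is bounded there, and hence $(\Phi^{-1})'$ is bounded on $\mathbb{R}$.

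I do not expect a genuine obstacle; the only mildly delicate point is justifying the surjectivity onto all of $\mathbb{R}$, but the non-integrability conditions in Assumption~\ref{ass:g} are tailored exactly for this and reduce the argument to monotone convergence together with the intermediate value theorem.
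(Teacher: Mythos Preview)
Your argument is correct and complete: the non-integrability conditions force $\Phi$ to be a strictly increasing $\C^{1}$ bijection from $(a,b)$ onto $\mathbb{R}$, and the inverse function theorem then gives $(\Phi^{-1})' = g \circ \Phi^{-1}$, which is bounded since $g$ is continuous on the compact interval $[a,b]$. The paper does not supply its own proof but defers to \cite{MR4737060}; your write-up is exactly the standard argument one would expect there.
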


\begin{proposition}\label{prop:alpha}
Suppose Assumpions~\ref{ass:f},~\ref{ass:g} and~\ref{ass:fg} are satisfied. Then $\alpha \in \C^{2} \left(\mathbb{R}\right)$ and
\begin{equation*}
\sup_{r \in \mathbb{R}} |\alpha(r)| + \sup_{r \in \mathbb{R}} |\alpha'(r)| + \sup_{r \in \mathbb{R}} |\alpha''(r)| < \infty.
\end{equation*}
In particular, $\alpha, \alpha': \mathbb{R} \to \mathbb{R}$ are globally Lipschitz continuous.
\end{proposition}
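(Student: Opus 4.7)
The plan is to write $\alpha = h \circ \Phi^{-1}$ with $h(y) = f(y)/g(y) - \tfrac{1}{2} g'(y)$ and to exploit the identity $(\Phi^{-1})'(r) = g(\Phi^{-1}(r))$, which follows from differentiating $\Phi(\Phi^{-1}(r)) = r$ together with $\Phi'(y) = 1/g(y)$. Each application of the chain rule therefore introduces an extra factor of $g(\Phi^{-1}(r))$, and this factor is precisely what will absorb the $1/g$-type singularities produced by differentiating $h$ near the boundary points $\{a,b\}$.

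First I would verify continuity and boundedness of $\alpha$ itself. Writing $y = \Phi^{-1}(r) \in (a,b)$, the one-sided limits $\lim_{y \to a^+} f(y)/g(y)$ and $\lim_{y \to b^-} f(y)/g(y)$ exist and are finite by Assumption~\ref{ass:fg}, so $f/g$ extends continuously to $[a,b]$. Since $g' \in \C^2([a,b])$ is bounded, the same holds for $h$, and Proposition~\ref{prop:phiinv} then yields continuity and boundedness of $\alpha = h \circ \Phi^{-1}$ on $\mathbb{R}$.

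Next, by the chain rule and the identity above,
\begin{equation*}
\alpha'(r) = h'(\Phi^{-1}(r)) \, g(\Phi^{-1}(r)) = f'(\Phi^{-1}(r)) - \frac{f(\Phi^{-1}(r)) g'(\Phi^{-1}(r))}{g(\Phi^{-1}(r))} - \tfrac{1}{2} g''(\Phi^{-1}(r)) g(\Phi^{-1}(r)).
\end{equation*}
The first and third summands are bounded by Assumptions~\ref{ass:f} and~\ref{ass:g}, while the middle summand is bounded since $f/g$ has finite limits at $\{a,b\}$ and $g'$ is continuous on $[a,b]$. Differentiating once more, each derivative of $\Phi^{-1}$ again contributes a factor $g(\Phi^{-1}(r))$, and after algebraic simplification every term in $\alpha''(r)$ contains at most one factor $1/g(\Phi^{-1}(r))$, paired with a factor of $f(\Phi^{-1}(r))$; the resulting quotient is bounded by Assumption~\ref{ass:fg}, and all other factors are continuous, hence bounded, on $[a,b]$.

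The main obstacle is precisely this bookkeeping for $\alpha''$: a naive expansion appears to produce terms with $1/g(\Phi^{-1}(r))^2$, and one must recombine them so that each negative power of $g$ is paired with a copy of $f$, leaving only the single bounded quotient $f/g$. Once boundedness of $\alpha'$ and $\alpha''$ has been established, the mean value theorem gives global Lipschitz continuity of $\alpha$ and of $\alpha'$ respectively. Since these computations are technical but routine, I would cite the detailed argument in \cite{MR4737060} rather than reproduce it in full.
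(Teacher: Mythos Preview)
Your proposal is correct and aligns with the paper's treatment: the paper does not prove Proposition~\ref{prop:alpha} in-text but simply refers to \cite{MR4737060}, which is exactly what you propose to do after sketching the key identities $(\Phi^{-1})' = g\circ\Phi^{-1}$ and $\alpha = h\circ\Phi^{-1}$. Your sketch in fact supplies more detail than the paper itself, and the bookkeeping concern you flag for $\alpha''$ is handled in the cited reference.
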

Before moving on, we mention that, by the inverse function theorem and by the chain rule,
\begin{equation}\label{eq:diffPhiinv}
\frac{\diff}{\diff r} \Phi^{-1}(r) = g \left( \Phi^{-1}(r) \right).
\end{equation}
The relation~\eqref{eq:diffPhiinv} will be needed for the exact simulation implementation used for the numerical experiments in Section~\ref{sec:num}.

In this work, we consider mild solutions of the SPDE in~\eqref{intro:SPDE}. Therefore, we introduce the heat kernel with Dirichlet boundary conditions
\begin{equation}\label{eq:heatKer}
G(t,x,y) = 2 \sum_{j=1}^{\infty} e^{- j^{2} \pi^{2} t} \sin(j \pi x) \sin(j \pi y)
\end{equation}
associated with the heat equation subject to Dirichlet boundary conditions:
\begin{equation*}
\left\lbrace
\begin{aligned}
& \diff v(t) = \Delta v(t) \diff t,\ (t,x) \in (0,T] \times [0,1], \\ 
& v(0,x) = \delta_{0}(x),\, \forall x \in [0,1].
\end{aligned}
\right.
\end{equation*}
We say that $u$ is a mild solution of the SPDE in~\eqref{intro:SPDE} if it satisfies
\begin{equation}\label{eq:umild}
  \begin{split}
    u(t,x) &= \int_{0}^{1} G(t,x,y) u_{0}(y) dy \\ &+ \int_{0}^{t} \int_{0}^{1} G(t-s,x,y) f(u(s,y)) ds dy \\ &+ \int_{0}^{t} \int_{0}^{1} G(t-s,x,y) g(u(s,y)) \diff W(s,y),
  \end{split}
\end{equation}
for $(t,x) \in [0,T] \times [0,1]$, almost surely.

We now briefly discuss boundary classification for SPDEs, with a particular focus on the SPDEs in~\eqref{intro:SPDE}. In contrast to the extensive boundary classification for one-dimensional SDEs, the literature on boundary classification for SPDEs is not as well-established. One of the earliest results in this direction was \cite{MR1149348} where strict positivity was established for stochastic heat equations with non-negative initial values. In this section, we establish that the solution $u$ of the considered SPDE in~\eqref{intro:SPDE} remains in the domain $\CD = [a,b]$, hence referred to as the invariant domain of the SPDE. We state and prove this together with existence and uniqueness of a mild solution to the SPDE~\eqref{intro:SPDE} in the following proposition.
\begin{proposition}\label{prop:u-BP}
Suppose the assumptions in Section~\ref{sec:spde} are satisfied. Then there exists a unique continuous mild solution $u$ to the considered SPDE in~\eqref{intro:SPDE} and it satisfies
\begin{equation*}
\PP \left( u(t,x) \in \CD = [a,b],\ \forall t \in [0,T],\ \forall x \in [0,1] \right) = 1.
\end{equation*}
\end{proposition}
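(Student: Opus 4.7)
The plan has two phases: (i) obtain existence and uniqueness by truncating $f$ and $g$ outside $[a,b]$ and appealing to classical SPDE theory; (ii) prove boundary preservation via a spatial finite-difference approximation together with the Feller-type non-attainability encoded in Assumption~\ref{ass:g}.

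First I would extend the coefficients. Assumptions~\ref{ass:g} and~\ref{ass:fg} force $f(a)=f(b)=g(a)=g(b)=0$, so $f$ and $g$ extend continuously to $\mathbb{R}$ by setting them equal to zero outside $[a,b]$. The $\C^{2}$ and $\C^{3}$ regularity on $[a,b]$ then guarantees that the extensions $\tilde f, \tilde g$ are bounded and globally Lipschitz continuous on $\mathbb{R}$. Classical Walsh-type existence and uniqueness theory (see, e.g., \cite{MR1644183}) yields a unique continuous mild solution $\tilde u$ of the SPDE~\eqref{intro:SPDE} with $\tilde f, \tilde g$ replacing $f,g$.

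Next, and this is the core step, I would show $\tilde u(t,x) \in [a,b]$ almost surely. Consider the finite-difference spatial semi-discretisation $\tilde u_N$ on a uniform grid of $N+2$ points with coefficients $\tilde f, \tilde g$; this is a system of scalar It\^o SDEs in which each interior component $\tilde u_N^{(i)}$ has drift equal to a discrete Laplacian coupling to the two nearest neighbours plus $\tilde f(\tilde u_N^{(i)})$, and diffusion $\tilde g(\tilde u_N^{(i)})$. Three observations are crucial: (i) if a component reaches the value $b$ while its neighbours lie in $[a,b]$, the discrete Laplacian contribution is non-positive and the local drift points inward (and symmetrically at $a$); (ii) both $\tilde f$ and $\tilde g$ vanish at $a$ and $b$; (iii) the non-integrability conditions in Assumption~\ref{ass:g} constitute the scalar Feller test for non-attainability of the boundary. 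A stopping-time argument applied to the first exit time of $\tilde u_N$ from $[a,b]^{N}$, combined with these three observations, yields $\tilde u_N(t) \in [a,b]^{N}$ for every $t \in [0,T]$ almost surely; this mirrors the discrete argument used in \cite{MR4780408} for half-bounded domains.

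Finally I would pass to the limit $N \to \infty$. Under the globally Lipschitz coefficients $\tilde f, \tilde g$, the finite-difference semi-discretisation $\tilde u_N$ converges to $\tilde u$ in probability, uniformly on $[0,T] \times [0,1]$, by classical results such as those in \cite{MR4050540,MR1644183}. Extracting a subsequence with almost sure pointwise convergence and using that $[a,b]$ is closed shows $\tilde u(t,x) \in [a,b]$ almost surely at each $(t,x)$; sample-path continuity of $\tilde u$ upgrades this to a single full-probability event. Since $\tilde u$ takes values in $[a,b]$, one has $\tilde f(\tilde u) = f(\tilde u)$ and $\tilde g(\tilde u) = g(\tilde u)$, so $\tilde u$ is in fact the unique continuous mild solution of~\eqref{intro:SPDE}. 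The main obstacle is the boundary preservation at the semi-discrete level: the Laplacian coupling destroys pathwise independence of the scalar components, so Feller-type non-attainability has to be propagated through the coupled system, typically by induction on the ordering of boundary-hitting times or by a joint comparison against the constant profiles equal to $a$ and $b$.
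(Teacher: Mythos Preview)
Your overall architecture---truncate to globally Lipschitz coefficients, prove boundary preservation at the finite-difference level, then pass to the limit---matches the paper's. Two points of departure are worth flagging.

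First, for the semi-discrete boundary preservation (the paper's Proposition~\ref{prop:N-BP}) the paper does \emph{not} argue directly via a Feller/stopping-time analysis of the coupled SDE system. Instead it invokes the very $\LTE$ scheme that is the subject of the paper: the $\LTE$ approximation of the truncated system is boundary-preserving by construction (each split step stays in $[a,b]$), and a variant of the paper's strong-convergence estimate (Proposition~\ref{prop:mainStrong}) gives $\hat{u}^{\LTE}_{L} \to u^{N}_{L}$ along a subsequence almost surely, hence $u^{N}_{L} \in [a,b]$. This neatly sidesteps exactly the obstacle you identify, namely pushing scalar non-attainability through the Laplacian coupling. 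Your direct route is plausible but would require a genuine comparison argument (e.g.\ dominating each component by a scalar SDE with the worst-case bounded Laplacian drift and then applying the scalar Feller test), which you have only sketched.

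Second, and more concretely, your uniqueness claim has a gap. You show that the solution $\tilde u$ of the truncated SPDE lies in $[a,b]$ and hence solves the original SPDE; but to conclude that $\tilde u$ is the \emph{unique} continuous mild solution of the original (non-Lipschitz) SPDE you must show that \emph{every} continuous mild solution $v$ of~\eqref{intro:SPDE} stays in $[a,b]$---only then does $v$ also solve the truncated SPDE and uniqueness for the latter apply. The paper supplies this as a separate step: with $\tau_{k}$ the first exit time of $v$ from a $1/k$-enlargement of $[a,b]$, on $[0,\tau_{k})$ the process $v$ coincides with the solution $u_{k}$ of the $k$-truncated SPDE, hence $v \in [a,b]$ there, which by continuity forces $\tau_{k}=\infty$. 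Without this localisation argument you have established existence and boundary preservation of \emph{one} solution, but not uniqueness.
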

To the best of our knowledge, Proposition~\ref{prop:u-BP} has not been proved before and the question of whether or not the boundary values $\{a,b \}$ will be attain at later times $t \in (0,T]$ is not clear. We leave this complete boundary classification to future works. As we focus on the numerical analysis in this work, we provide the proof in Appendix~\ref{sec:app-EUB}. Note that all moments of $u$ are bounded by Proposition~\ref{prop:u-BP}.

\subsection{Spatial discretisation}\label{sec:spacedisc}
In this section, we provide the necessary background on finite difference approximations with uniform meshes for the specific case of the considered SPDE in~\eqref{intro:SPDE}. The presented material is essentially the same as in \cite{MR4780408,MR1644183}, we include it, in a slightly different style, for completeness. Recall that the spatial domain of the SPDE in~\eqref{intro:SPDE} is $[0,1]$. 

The finite difference method approximates differentials with finite difference quotients. We let $N \in \mathbb{N}$ and introduce the space grid size $\Delta x = 1/N$ and the corresponding space grid points $x_{n} = n \Delta x$, for $n = 0, \ldots,N$. Both $N$ and $\Delta x$ will be used for notational convenience. Associated with the introduced space grid, we let $\kappa^{N}:[0,1] \to \{ x_{0},\ldots, x_{N} \}$ be the step function defined by $\kappa^{N}(x) = x_{n}$ for $x \in [x_{n},x_{n+1})$, for $n=0,\ldots,N-1$, and $\kappa^{N}(1)=1$.

First, we discretise the initial value of the SPDE in~\eqref{intro:SPDE} by $u^{N}_{0,n} = u_{0}(\kappa^{N}(x_{n}))$, for $n=1,\ldots,N$. Let us denote by $u^{N}_{0} \in \mathbb{R}^{N-1}$ the vector with elements $\left( u^{N}_{0} \right)_{n} = u^{N}_{0,n}$. Next, we discretise the SPDE in~\eqref{intro:SPDE} in space by replacing differentials with finite difference quotients to obtain the following system of SDEs
\begin{align*}
\diff u^{N}(t,x_{n}) &= N^{2} \left( u^{N}(t,x_{n+1}) - 2 u^{N}(t,x_{n}) + u^{N}(t,x_{n-1}) \right) \diff t + f(u^{N}(t,x_{n})) \diff t \\ &+ N g(u^{N}(t,x_{n})) \diff \left( W(t,x_{n+1})-W(t,x_{n}) \right),
\end{align*}
for $n=1,\dots,N-1$, and
\begin{equation*}
u^{N}(t,0) = u^{N}(t,1) = 0.
\end{equation*}
We now re-write the above in a more compact form. Let us define the $\mathbb{R}^{N-1}$-valued stochastic processes $u^{N}(t)$ as the vector of size $N-1$ with elements $u^{N}_{n}(t) = \left(u^{N}(t) \right)_{n} = u^{N}(t,x_{n})$, for $n = 1,\ldots,N-1$ and for $t \in [0,T]$, and $W^{N}(t)$ as the vector of size $N-1$ with elements $W^{N}_{n}(t) = \left( W^{N}(t) \right)_{n} = \sqrt{N} \left( W(t,x_{n+1}) - W(t,x_{n}) \right)$, for $n = 1,\ldots,N-1$ and for $t \in [0,T]$. Note that we exclude the elements corresponding to $n=0$ and $n = N$, as $u^{N}(t,0) = u^{N}(t,1)=0$ by the Dirichlet boundary conditions. Then $W^{N}(t)$ is a vector containing $N-1$ independent standard Brownian motions and $u^{N}$ is the solution to the following system of SDEs in the Itô sense
\begin{equation}\label{eq:uNSDE}
\left\lbrace
\begin{aligned}
& \diff u^{N}(t) = N^{2} D^{N} u^{N}(t) \diff t + f(u^{N}(t)) \diff t + \sqrt{N} g(u^{N}(t)) \diff W^{N}(t),\ t \in (0,T], \\ 
& u^{N}(0) = u^{N}_{0},
\end{aligned}
\right.
\end{equation}
where $D^{N} = \left( D^{N}_{i,j} \right)_{1 \leq i,j, \leq N-1}$ denotes the standard finite difference matrix coming from the discretisation of the Laplacian with Dirichlet boundary conditions given by
\begin{equation*}
D^N=
\begin{pmatrix}
-2 & 1 & 0 & \ldots & 0 & 0 & 0\\
1 & -2 & 1 & \ddots & 0 & 0 & 0\\
0 & 1 & -2 & \ddots & 0 & 0 & 0\\
\vdots & \ddots & \ddots & \ddots & \ddots & \ddots & \vdots\\
0 & 0 & 0 & \ddots & -2 & 1 & 0\\
0 & 0 & 0 & \ddots & 1 & -2 & 1\\
0 & 0 & 0 & \ldots & 0 & 1 & -2
\end{pmatrix}.
\end{equation*}
We interpret $g(u^{N}(t)) \diff W^{N}(t)$ in~\eqref{eq:uNSDE} as a vector of size $N-1$ with elements $g(u^{N}_{n}(t)) \diff W^{N}_{n}(t)$.

For the purpose of the convergence proof, it will be convenient to define a piecewise linear extension $u^{N}: [0,T] \times [0,1] \to \mathbb{R}$ that satisfy an integral equation similar to that of $u$ (see equation~\eqref{eq:umild}). The idea comes from \cite{MR1644183}, but we include it for completeness. For $x \in [x_{n},x_{n+1})$, for any $n = 0,\ldots, N-1$, let
\begin{equation*}
u^{N}(t,x) = u^{N}_{n}(t) + (N x - n) \left( u^{N}_{n+1}(t) - u^{N}_{n}(t) \right)
\end{equation*}
and let $u^{N}(t,1) = u^{N}_{N}(t)=0$ (by homogeneous Dirichlet boundary conditions). The above two definitions of $u^{N}$ should not be confusing as they coincide on the space grid points. Finally, we introduce the kernel $G^{N}$ given by
\begin{equation}\label{eq:discKer}
G^{N}(t,x,y) = \sum_{j=1}^{N-1} e^{-\lambda_{j}^{N} t} \varphi_{j}^{N}(x) \varphi_{j}(\kappa^{N}(y)),
\end{equation} 
for $t \in [0,T]$ and for $x,y \in [0,1]$, where
\begin{equation*}
\lambda^{N}_{j} = 4 N \sin^{2} \left( \frac{j \pi}{2 N}  \right),\ \varphi_{j}(x) = \sqrt{2} \sin (j \pi x),
\end{equation*}
and where $\varphi^{N}_{j}$ is the linear interpolation of $\varphi_{j}$ at the space grid points $x_{0}, \ldots, x_{N}$. More precisely, $\varphi^{N}_{j}(x_{n}) = \varphi_{j}(x_{n})$ and
\begin{equation*}
\varphi^{N}_{j}(x) = \varphi_{j}(x_{n}) + (N x - n) \left( \varphi_{j}(x_{n+1}) - \varphi_{j}(x_{n}) \right)
\end{equation*}
for $x \in (x_{n},x_{n+1})$, for $n=1, \ldots, N-1$. Observe that the kernel $G^{N}$ defined in equation~\eqref{eq:discKer} resembles the heat kernel defined in equation~\eqref{eq:heatKer}. Also note that $\lambda_{j}^{N}$ and $(\varphi_{j}(x_{1}),\ldots,\varphi_{j}(x_{N-1}))$ is the $j$th eigenvalue-eigenvector pair of the matrix $D^{N}$. The kernel $G^{N}$ can also be viewing in a different way that we now discuss, which will be useful later on. Let us introduce the matrix $\left( G^{N}_{ij} \right)_{1 \leq i,j \leq N-1} = e^{t N^{2} D^{N}}$ for $t \in [0,T]$. Then one can show that $G^{N}(t,x_{i},_x{j}) = N G^{N}_{ij}(t)$, for $t \in [0,T]$. In other words, $G^{N}(t,x,y)$ is an extension to continuous space of the matrix exponential of $t N^{2} D^{N}$.

The kernel $G^{N}$ satisfies the following two estimates that will be used in the proof of weak convergence.
\begin{enumerate}
\item[\textbullet] For every $t \in (0,T]$ (see Lemma $2.3$ in \cite{MR4050540})
\begin{equation}\label{eq:intGNest}
\sup_{N \in \mathbb{N}} \sup_{x \in [0,1]} \int_{0}^{1} \left| G^{N}(t,x,y) \right|^{2} \diff y \leq C(T) \frac{1}{\sqrt{t}}.
\end{equation}
\item[\textbullet] For every $t \in (0,T]$ (see equation $(25b)$ in \cite{MR4780408})
\begin{equation}\label{eq:intGNdiffEst}
\sup_{N \in \mathbb{N}} \sup_{x \in [0,1]} \int_{0}^{t} \int_{0}^{1} \left| G^{N}(t-\ell^{M}(s),x,y) - G^{N}(t-s,x,y) \right|^{2} \diff s \diff y \leq \frac{\Delta t}{\Delta x}.
\end{equation}
\end{enumerate}
Similarly, we collect the following needed properties that the space-discrete approximation $u^{N}$ satisfies.
\begin{enumerate}
\item[\textbullet] $u^{N}$ satisfies the following integral equation (see equation $2.7$ in \cite{MR1644183})
\begin{equation}\label{eq:Nmild}
  \begin{split}
    u^{N}(t,x) &= \int_{0}^{1} G^{N}(t,x,y) u_{0}(\kappa^{N}(y)) \diff y \\ &+ \int_{0}^{t} \int_{0}^{1} G^{N}(t-s,x,y) f(u^{N}(s,\kappa^{N}(y))) \diff s \diff y \\ &+ \int_{0}^{t} \int_{0}^{1} G^{N}(t-s,x,y) g(u^{N}(s,\kappa^{N}(y))) \diff W(s,y),
  \end{split}
\end{equation}
for $s,t \in [0,T]$ and for $x,y \in [0,1]$.
\item[\textbullet] $u^{N}$ satisfies the following time regularity estimate (see Proposition $3$ in \cite{MR4780408})
\begin{equation}\label{eq:NstReg}
\E \left[ \left| u^{N}(s,x) - u^{N}(t,x) \right|^{2} \right] \leq C(T,f,g) \frac{|t-s|}{\Delta x}
\end{equation}
for $s,t \in (0,T]$ and for $x,y \in [0,1]$, almost surely.
\item[\textbullet] The difference between $u^{N}$ and the mild solution $u$ satisfies (see Theorem $3.1$ in \cite{MR1644183})
\begin{equation}\label{eq:Nstrong}
\sup_{t \in [0,T]} \sup_{x \in [0,1]} \left( \E \left[ \left| u^{N}(t,x) - u(t,x) \right|^{2} \right] \right)^{1/2} \leq C(T,f,g) \Delta x^{1/2}.
\end{equation}
\end{enumerate}
The three properties of $u^{N}$ stated above and the following proposition giving boundary preservation of $u^{N}$ will be used to prove weak convergence of the proposed $\LTE$ scheme. We provide the proof of Proposition~\ref{prop:N-BP} in Appendix~\ref{sec:app-uN}.
\begin{proposition}\label{prop:N-BP}
Suppose that the assumptions in Section~\ref{sec:spde} are satisfied. Let $u^{N}$ be the finite difference approximation with initial value $u^{N}(0,x) = u_{0}(\kappa^{N}(x))$. Then
\begin{equation*}
\PP \left( u^{N}(t,x) \in \CD = [a,b],\ \forall t \in [0,T],\ \forall x \in [0,1] \right) = 1.
\end{equation*}
\end{proposition}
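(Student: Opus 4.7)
The plan is to establish the boundary preservation of $u^N$ by an It\^o/energy argument with a carefully chosen convex test function that vanishes on $[a,b]$ and whose growth on the complement is cancelled by the vanishing of $f$ and $g$ at the boundary.

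First, I extend $f$ and $g$ from $[a,b]$ to globally Lipschitz functions on $\mathbb{R}$ by setting them to zero outside $[a,b]$. This extension is Lipschitz because $g(a)=g(b)=0$ by Assumption~\ref{ass:g} and $f(a)=f(b)=0$ follows by combining Assumptions~\ref{ass:g} and~\ref{ass:fg}. With these Lipschitz extensions the SDE system~\eqref{eq:uNSDE} admits a unique strong solution, and once I show this solution never leaves $[a,b]^{N-1}$ the choice of extension outside $[a,b]$ becomes irrelevant.

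Next, I introduce the convex nonnegative test function $\phi(r)=((r-b)_+)^4\in C^3(\mathbb{R})$, which satisfies $\phi(r)=\phi'(r)=\phi''(r)=0$ for all $r\le b$ (in particular $\phi'(0)=0$, since $0\in\D$), and I define the energy $V(v)=\sum_{n=1}^{N-1}\phi(v_n)$. Applying It\^o's formula to each $\phi(u^N_n(t))$ and summing over $n$, I exploit the crucial pointwise cancellations: whenever $u^N_n(t)\le b$ the derivatives $\phi',\phi''$ vanish, and whenever $u^N_n(t)>b$ the extended $f,g$ vanish. Consequently the drift term containing $f$, the It\^o correction containing $g^2$, and the stochastic integral containing $g$ all vanish identically along every sample path. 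Only the discrete Laplacian contribution $N^2\sum_{n=1}^{N-1}\phi'(u^N_n)(u^N_{n+1}-2u^N_n+u^N_{n-1})$ remains; using the Dirichlet boundary conditions $u^N_0=u^N_N=0$ together with $\phi'(0)=0$, a discrete integration by parts rewrites this sum as $-\sum_{n=0}^{N-1}[\phi'(u^N_{n+1})-\phi'(u^N_n)](u^N_{n+1}-u^N_n)$, which is non-positive by convexity (monotonicity of $\phi'$).

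Since $V(u^N(0))=0$ by the assumption $u_0(x)\in[a,b]$ and the resulting integrand is pointwise non-positive, I conclude $V(u^N(t))=0$ for every $t\in[0,T]$ almost surely, giving $u^N_n(t)\le b$ at every grid point and every time. The symmetric argument with $\tilde\phi(r)=((a-r)_+)^4$ yields $u^N_n(t)\ge a$, and for $x\in(x_n,x_{n+1})$ the piecewise linear extension $u^N(t,x)$ is a convex combination of $u^N_n(t)$ and $u^N_{n+1}(t)$ and therefore also lies in $[a,b]$. The main obstacle is arranging the It\^o calculation so that the stochastic contributions vanish sample-wise rather than only in expectation; this is what avoids any Gronwall bootstrap or moment estimate and hinges on the precise matching between the support of $\phi',\phi''$ (above $b$) and the vanishing of $f,g$ (also above $b$, via the extension). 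A secondary technicality is the absence of boundary terms in the discrete summation by parts, which is ensured by $\phi'(0)=0$ together with the homogeneous Dirichlet conditions.
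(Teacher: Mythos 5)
Your proof is correct, but it takes a genuinely different route from the paper. The paper establishes Proposition~\ref{prop:N-BP} indirectly: it truncates $f$ and $g$ to globally Lipschitz functions $f_L, g_L$, observes that the $\LTE$ scheme applied to the truncated system~\eqref{eq:app-truncSDE} is boundary-preserving by construction (Feller classification for the split SDEs plus Lemma~\ref{lem:heatKBP}), invokes a variant of the strong convergence argument of Proposition~\ref{prop:mainStrong} to pass to the limit $\Delta t \to 0$, and extracts an almost surely convergent subsequence. You instead give a direct pathwise Lyapunov argument: with the zero-extension of $f,g$ (which is indeed globally Lipschitz since $f(a)=f(b)=g(a)=g(b)=0$), the function $V(v)=\sum_n ((v_n-b)_+)^4$ has It\^o differential in which the $f$-drift, the It\^o correction and the stochastic integrand all vanish identically because the supports of $\phi',\phi''$ and of the extended $f,g$ are disjoint, while the discrete Laplacian contribution is nonpositive by summation by parts and convexity (the boundary terms vanish because $0\in\D$ forces $\phi'(0)=0$). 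This is a clean discrete maximum principle and is arguably preferable here: it is self-contained, needs no reference to the numerical scheme or to any convergence result, and yields the conclusion sample-wise rather than via an a.s.\ convergent subsequence. What the paper's route buys in exchange is economy — it reuses machinery (the $\LTE$ boundary preservation and the Gr\"onwall estimate) that is needed elsewhere anyway, and the same truncation-plus-approximation template is then recycled for the continuum statement in Proposition~\ref{prop:u-BP}, where a direct energy argument would be more delicate. Two minor points to make explicit if you write this up: the It\^o formula applies since $\phi\in\C^{2}(\mathbb{R})$ and the extended coefficients are globally Lipschitz (so the strong solution of~\eqref{eq:uNSDE} exists and has bounded moments), and the passage from ``$V(u^N(t))=0$ for the extended system'' back to the original $u^N$ uses, exactly as in the paper, that modifying $f$ and $g$ outside $\CD$ does not affect a solution confined to $\CD$.
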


We remark that Theorem~$3.1$ in \cite{MR1644183} cannot be directly applied to obtain the strong convergence in~\eqref{eq:Nstrong} as the drift and diffusion coefficients $f$ and $g$ are not globally Lipschitz continuous in the present setting. But, thanks to Proposition~\ref{prop:u-BP} and Proposition~\ref{prop:N-BP}, we can modify the coefficients $f$ and $g$ outside of $\CD = [a,b]$, without affecting the solution $u$ and the approximation $u^{N}$, to obtain an SPDE with globally Lipschitz continuous coefficients. We also remark that~\eqref{eq:Nstrong} requires $u_{0} \in \C^{3} \left( [0,1] \right)$ to be true, weaker regularity assumptions on $u_{0}$ would imply weaker conclusion in~\eqref{eq:Nstrong} (see \cite{MR1644183} for more details).

\subsection{Lie--Trotter time splitting}\label{sec:LTsplit}
The idea of a Lie--Trotter time splitting scheme for ordinary differential equations (ODEs) is to decompose an ODE of the form
\begin{equation*}
\left\lbrace
\begin{aligned}
& \frac{\diff y(t)}{\diff t} = f_{1}(y(t)) + f_{2}(y(t)),\ t \in (0,\Delta t], \\ 
& y(0) = y_{0},
\end{aligned}
\right.
\end{equation*}
that we wish to solve on a small interval $[0,\Delta t]$, into two ODEs
\begin{equation*}
\left\lbrace
\begin{aligned}
& \frac{\diff y_{1}(t)}{\diff t} = f_{1}(y_{1}(t)),\ t \in (0,\Delta t], \\ 
& y_{1}(0) = y_{0},
\end{aligned}
\right.
\end{equation*}
and
\begin{equation*}
\left\lbrace
\begin{aligned}
& \frac{\diff y_{2}(t)}{\diff t} = f_{2}(y_{2}(t)),\ t \in (0,\Delta t], \\ 
& y_{2}(0) = y_{1}(\Delta t),
\end{aligned}
\right.
\end{equation*}
that are solved in sequence. We refer the interested reader to \cite{MR2009376} for a complete exposition of splitting schemes.

\subsection{Exact simulation of SDEs}\label{sec:exactSim}
We dedicate this section to a summary of exact simulation for one-dimensional time-homogeneous SDEs of the form
\begin{equation}\label{eq:SDE-exactSim}
\left\lbrace
\begin{aligned}
& \diff X(t) = f(X(t)) \diff t + \lambda g(X(t)) \diff B(t),\ t \in (0,T], \\ 
& X(0) = x_{0} \in (a,b),
\end{aligned}
\right.
\end{equation}
for some $\lambda > 0$, where $f$ and $g$ satisfy the assumptions in Section~\ref{sec:spde}. Note that we in this section assume that $x_{0} \in (a,b)$. If $x_{0} \in \{a,b \}$, then the drift $f$ and the diffusion $g$ vanish (see Assumption~\ref{ass:g} and Assumption~\ref{ass:fg}) and $X(t) = x_{0}$ for all $t \in [0,T]$. In essence, the exact simulation algorithm developed in \cite{MR2274855,MR2187299} is an accept-reject sampling algorithm on sample paths. The exact simulation algorithm samples a proposal path of the SDE~\eqref{eq:SDE-exactSim} and then accepts or rejects based on a condition to preserve the distribution. The assumptions in Section~\ref{sec:spde} guarantees that, with probability one, the solution $X$ of~\eqref{eq:SDE-exactSim} with initial value $x_{0} \in (a,b)$ can only take values in the open domain $(a,b)$. We refer to \cite{MR4737060}, where the same assumptions on $f$ and $g$ are used, for more details on the boundary behaviour of the solution $X$ of~\eqref{eq:SDE-exactSim}. Relevant for this paper is $\lambda = \sqrt{N}$, where $N$ is the space discretisation parameter (see Section~\ref{sec:spacedisc}). The objective of this section is not to give a complete description of exact simulation, but rather to introduce enough for our purposes. See \cite{MR2274855,MR2187299} for a comprehensive exposition of exact simulation of diffusion processes. 

It turns out to be convenient to first apply a deterministic time-change to the SDE in~\eqref{eq:SDE-exactSim} to transfer the constant $\lambda>0$ in the diffusion coefficient to a factor $\lambda^{-2}$ in the drift coefficient. Most importantly, the setting used in exact simulation for SDEs~\cite{MR2187299} is easier to apply to the current setting after this deterministic time-change. For example, the Lamperti transform $\Phi$ in~\eqref{eq:LampTrans} below would be dependent on $\lambda$ without the deterministic time-change. See Appendix~\ref{sec:app-TC} for more details on this time-change. We introduce $\Theta (t) = \lambda^{-2} t$ and let $\widetilde{X}(t) = X(\Theta(t)) = X(\lambda^{-2} t)$. Then $\widetilde{X}$ satisfies
\begin{equation}\label{eq:SDE-exactSim-TC}
\left\lbrace
\begin{aligned}
& \diff \widetilde{X}(t) = \lambda^{-2} f(\widetilde{X}(t)) \diff t + g(\widetilde{X}(t)) \diff B(t),\ t \in (0,\lambda^{2} T], \\ 
& \widetilde{X}(0) = x_{0} \in (a,b),
\end{aligned}
\right.
\end{equation}
in the weak sense, and we recover the solution $X$ of the SDE in~\eqref{eq:SDE-exactSim} by $X(t) = \widetilde{X}(\Theta^{-1}(t)) = \widetilde{X}(\lambda^{2} t)$. Thus, sampling $X(\Delta t)$ given $x_{0}$ and is equivalent to sampling $\widetilde{X}(\lambda^{2} \Delta t)$ given $x_{0}$.

The Lamperti tranform $\Phi$ associated with the SDE in~\eqref{eq:SDE-exactSim-TC} is defined by
\begin{equation}\label{eq:LampTrans}
\Phi(r) = \int_{r_{0}}^{r} \frac{1}{g(w)} \diff w,\ r \in (a,b),
\end{equation}
for some choice of $r_{0} \in (a,b)$, and has the property that the stochastic process defined by $Y(t) = \Phi(\widetilde{X}(t))$ satisfies the following SDE with additive noise
\begin{equation}\label{eq:SDEexactLamp}
\left\lbrace
\begin{aligned}
& \diff Y(t) = \alpha(Y(t)) \diff t + \diff B(t),\ t \in (0,\lambda^{2} T], \\ 
& Y(0) = \Phi(x_{0}) \in \mathbb{R},
\end{aligned}
\right.
\end{equation}
where the drift coefficient $\alpha$ is given by
\begin{equation*}
\alpha(r) = \frac{1}{\lambda^{2}} \frac{f(\Phi^{-1}(r))}{g(\Phi^{-1}(r))} - \frac{1}{2} g'(\Phi^{-1}(r)),\ r \in \mathbb{R}.
\end{equation*}
Recall that the function $\Phi^{-1}: \mathbb{R} \to (a,b)$ is bounded, bijective, continuously differentiable and has bounded derivative by Proposition~\ref{prop:phiinv}. Observe that the transformation $Y(t) = \Phi(\widetilde{X}(t))$ is well-defined if $\widetilde{X}$ stays in the open domain $(a,b)$ and observe that if we let $r_{0} = x_{0}$ then $Y(0) = 0$. In fact, we let $r_{0} = x_{0}$ to comply with \cite{MR2187299}. Let $T_{\lambda} = \lambda^{2} T$ for ease of presentation.

We are now in a position to describe the exact simulation and we do so for the SDE in~\eqref{eq:SDE-exactSim-TC} for convenience and as this resembles the exposition in \cite{MR2187299} the most. Here we provide an overview of the idea behind exact simulation, in Appendix~\ref{app:implementation} we provide some extra implementation details for the numerical experiments in Section~\ref{sec:num} and we refer to \cite{MR2187299} for the complete picture. The exact simulation algorithm can be briefly summarised as follows
\begin{enumerate}[(i)]
\item Use the Lamperti transform $\Phi$ defined in~\eqref{eq:LampTrans} to convert the SDE in~\eqref{eq:SDE-exactSim-TC} into the SDE in~\eqref{eq:SDEexactLamp} with drift $\alpha$ and additive noise.
\item Sample a skeleton $\omega_{1}, \ldots, \omega_{L}$ (at certain random time points $\tau_{1},\ldots,\tau_{L}$) from the distribution of a biased Brownian motion with end point distribution $h$, where $\omega_{L} \sim h$ and $\tau_{L} = T_{\lambda}$ is the (deterministic) end point of the interval under consideration and where
\begin{equation}\label{eq:hdef}
h(r) \propto \exp \left( A(r) - \frac{r^{2}}{2 T_{\lambda}} \right),\ r \in \mathbb{R},
\end{equation}
with
\begin{equation}\label{eq:Adef}
A(r) = \int_{0}^{r} \alpha(y) \diff y,\ r \in \mathbb{R}.
\end{equation}
Note that $\propto$ in~\eqref{eq:hdef} means that we have discarded some factors not depending on $r \in \mathbb{R}$.
\item Compute a reject probability based on $\omega_{1},\ldots,\omega_{L}$ to decide whether or not to reject $\omega_{1},\ldots,\omega_{L}$. If rejected, then go back to (ii). If required or desired, sample additional point(s) $\widetilde{\omega}$ at any (non-random) time point(s) $\widetilde{\tau} \in (\tau_{k},\tau_{k+1})$ using a Brownian bridge(s) between $\omega_{k}$ and $\omega_{k+1}$.
\item Apply the inverse Lamperti transform $\Phi^{-1}$ to the skeleton (re-enumerated to include additional points) $\omega_{1},\ldots,\omega_{L}$.
\end{enumerate}
The above described algorithm produces sampled points $\Phi^{-1}(\omega_{0}),\ldots,\Phi^{-1}(\omega_{L})$ corresponding to the time points $\tau_{0},\ldots,\tau_{L}$ that have the same distribution as $\widetilde{X}(\tau_{0}),\ldots,\widetilde{X}(\tau_{L})$. To obtain sampled points from the original SDE~\eqref{eq:SDE-exactSim} we apply the inverse time-change $\Theta^{-1}$. At this point, we would like to raise three points. Firstly, in order to use the exact simulation algorithm we have to be able to exactly sample from the above defined distribution function $h$, and $h$ depends on the choices of $f$ and $g$. If $h$ is not a standard distribution that is easily sampled from, then we have to resort to other means (e.g., standard accept-reject sampling). We refer to the classical book \cite{MR836973} for many approaches to sampling from distributions. Secondly, the fact that step (iii) can be exactly evaluated in a computer is nontrivial, and we refer the interested reader to \cite{MR2187299}. Thirdly, the above construction is well-defined and executable under the assumptions in Section~\ref{sec:spde}.

(i) and (iv) are self-explanatory, it remains to consider (ii) and (iii) for the transformed SDE with additive noise in~\eqref{eq:SDEexactLamp}. For (ii), given the random time points $\tau_{1},\ldots,\tau_{L-1}$ and $\omega_{L} \sim h$ at $\tau_{L} = T_{\lambda}$, we sample $\omega_{1},\ldots,\omega_{L-1}$ using Brownian bridges. The random time points $\tau_{1},\ldots,\tau_{L-1}$ are chosen so that the accept-reject probability in (iii) can be computed from only finitely many values of $\left( \omega(t) \right)_{t \in [0,T_{\lambda}]}$. More details on this will be included below.

We now derive the accept-reject probability for (iii), this will also motivate the precise construction in (ii). First we compare the sample paths $\omega \in \C([0,T_{\lambda}])$ of the SDE for standard Brownian motion under the standard probability measure $\PP$ (on $\Omega$)
\begin{equation}\label{eq:exact-BM}
\left\lbrace
\begin{aligned}
& \diff Y(t) = \diff B(t),\ t \in (0, T_{\lambda}], \\ 
& Y(0) = 0,
\end{aligned}
\right.
\end{equation}
to the sample paths of the SDE with additive noise
\begin{equation}\label{eq:exact-ad}
\left\lbrace
\begin{aligned}
& \diff Y(t) = \alpha(Y(t)) \diff t + \diff B(t),\ t \in (0,T_{\lambda}], \\ 
& Y(0) = 0,
\end{aligned}
\right.
\end{equation}
using Girsanov's theorem. To this end, we let $\WW$ be the probability measure on $\C([0,T_{\lambda}])$ induced by~\eqref{eq:exact-BM}, $\QQ$ be the probability measure on $\C([0,T_{\lambda}])$ induced by~\eqref{eq:exact-ad} and
\begin{equation}\label{eq:Mtexpr1}
M_{t}(\omega) = \EXP \left( \int_{0}^{t} \alpha \left( \omega(s) \right) \diff B(s) - \frac{1}{2} \int_{0}^{t} \alpha^{2} \left( \omega(s) \right) \diff s \right),\ t \in [0,T_{\lambda}],\ \omega \in \C([0,T_{\lambda}]).
\end{equation}
The Radon-Nikodym derivative is then given by $\frac{\diff \QQ}{\diff \WW} = M_{T_{\lambda}}$. This follows from by Girsanov's theorem (see, e.g., Theorem $8.6.4$ in \cite{MR2001996}), provided that $\left( M_{t} \right)_{t \in [0,T_{\lambda}]}$ is a martingale under the probability measure $\WW$. Indeed, $\left( M_{t} \right)_{t \in [0,T_{\lambda}]}$ is a martingale under $\WW$, by, e.g., Novikov's condition (see, e.g., equation $8.6.8$ in \cite{MR2001996}) and Proposition~\ref{prop:alpha}. The natural approach would be to construct an accept-reject probability based on the Radon-Nikodym derivative $\frac{\diff \QQ}{\diff \WW}(\omega) = M_{T_{\lambda}}(\omega)$, for $\omega \in \C([0,T_{\lambda}])$. The problem with this approach is that we, in that case, would have to evaluate~\eqref{eq:Mtexpr1} (for $t = T_{\lambda}$) exactly, and this is not possible by the presence of the stochastic integral. 

By recalling the definition of $A$ in equation~\eqref{eq:Adef} and by using Ito's formula, we can avoid the need to compute the stochastic integral in~\eqref{eq:Mtexpr1} by rewrite $M_{t}$ as follows
\begin{equation}\label{eq:Mt}
M_{t}(\omega) = \exp \left( A(\omega(t) - A(\omega(0)) \right) \times \exp \left( - \frac{1}{2} \int_{0}^{t} \alpha'(\omega(s)) + \alpha^{2}(\omega(s)) \diff s \right),\ t \in [0,T_{\lambda}].
\end{equation}
Recall that $\alpha$ is bounded and twice differentiable with bounded first and second derivatives by Proposition~\ref{prop:alpha}. The first factor involving $A(\omega(t))$ can be unbounded and is hence problematic. We therefore, instead, consider the Random-Nikodym derivative $\frac{\diff \QQ}{\diff \Z} = \frac{\diff \QQ}{\diff \WW} \frac{\diff \WW}{\diff \Z}$ where $\Z$ is the probability measure on $\C([0,T_{\lambda}])$ induced by Brownian motion conditioned on $B(T_{\lambda}) \sim h$ where $h$ is the distribution function given in equation~\eqref{eq:hdef}. One can then show that (see, e.g., Proposition $3$ in \cite{MR2187299})
\begin{equation*}
\frac{\diff \Z}{\diff \WW}(\omega) = \frac{\sqrt{2 \pi T_{\lambda}} h(\omega(T_{\lambda}))}{ \exp \left( - \frac{(\omega(T_{\lambda})-\omega(0))^{2}}{2 T_{\lambda}} \right)}
\end{equation*}
and in turn that
\begin{equation}\label{eq:QZderiv}
\frac{\diff \QQ}{\diff \Z}(\omega) \propto \exp \left( - \frac{1}{2} \int_{0}^{T_{\lambda}} \alpha'(\omega(s)) + \alpha^{2}(\omega(s)) \diff s \right),
\end{equation}
where $\propto$ means that we have discarded some factors not depending on $\omega$. Notice that~\eqref{eq:QZderiv} is similar to the expression in~\eqref{eq:Mt} except that the possibly unbounded factor of $A(Y(t))$ is no longer present, and there is hope that we can construct an accept-reject probability based on~\eqref{eq:QZderiv}. With the aim to normalise the right hand side of~\eqref{eq:QZderiv}, we let $k_{1},k_{2} \in \mathbb{R}$ be such that
\begin{equation}\label{eq:phiB}
k_{1} \leq \frac{1}{2} \alpha'(r) + \frac{1}{2} \alpha^{2}(r) \leq k_{2},
\end{equation}
for every $t \in \mathbb{R}$, which is possible by Proposition~\ref{prop:alpha}. We remark that the assumption in equation~\eqref{eq:phiB} can be relaxed without hindering us from using the exact simulation algorithm. See, for example, Section $3$ in \cite{MR2274855} for more details. By introducing
\begin{equation*}
\phi(r) = \frac{1}{2} \alpha'(r) + \frac{1}{2} \alpha^{2}(r) - k_{1},\ r \in \mathbb{R},
\end{equation*}
we can write~\eqref{eq:QZderiv} on the more compact form (see also, e.g., equation $5$ in \cite{MR2187299})
\begin{equation*}
\frac{\diff \QQ}{\diff \Z}(\omega) \propto \exp \left( - \int_{0}^{T_{\lambda}} \phi(\omega(s)) \diff s \right) \leq 1.
\end{equation*}
We accept or reject an element $\omega \in \C([0,T_{\lambda}])$ as follows. Suppose that $\omega \sim \Z$ is a sample path from the Brownian motion conditioned on $B(T_{\lambda}) \sim h$. We accept the path $\omega$ with probability $p(\omega) = \exp \left( - \int_{0}^{T_{\lambda}} \phi(\omega(s)) \diff s \right)$ and we  reject the path $\omega$ with probability $1-p(\omega)$. If we reject $\omega$, then we repeat the procedure with a new sample $\omega$ from probability measure $\Z$. In practice, we cannot evaluate $p(\omega)$ as it depends on infinitely many values $\left( \omega(t) \right)_{t \in [0,T_{\lambda}]}$. To circumvent this issue, let $\mathcal{N}$ denote the number of points below the graph $\{ (t,\phi(\omega(t))), t \in [0,T_{\lambda}] \}$ of a homogeneous Poisson process with unit intensity on $[0,T_{\lambda}] \times [k_{2}-k_{1}]$. Then, as proved and stated in a clear way in Theorem $1$ in \cite{MR2274855}, we have that
\begin{equation*}
\PP \left( \mathcal{N} = 0 \ | \ \omega \right) = \exp \left( - \int_{0}^{T_{\lambda}} \phi(\omega(s)) \diff s \right) = p(\omega);
\end{equation*}
that is, we can compute $p(\omega)$ from finitely many values of $\omega$ at random time points.
 
Lastly, to guarantee that the probability of accepting a sample path $\omega \in \C([0,T_{\lambda}])$ is bounded from below by $e^{-1}$, we assume that the end time point $T_{\lambda}$ is bounded by
\begin{equation}\label{eq:TLbound}
T_{\lambda} \leq \frac{1}{k_{2} - k_{1}}.
\end{equation}
See Proposition $4$ in \cite{MR2187299} or Proposition $3$ in \cite{MR2274855} for more details. The condition~\eqref{eq:TLbound} is satisfies for each numerical example in Section~\ref{sec:num} when using the coupling $\Delta t = \Delta x^{2}$. This completes the description of the exact simulation algorithm.

\section{The boundary-preserving scheme}\label{sec:scheme}
In this core part of the paper, we give a complete description of the boundary-preserving scheme and state and prove that the scheme is boundary-preserving and sharp weak convergence order. First, we let $M \in \mathbb{N}$ and introduce the time grid size $\Delta t = T/M$ and the corresponding time grid points $t_{m} = m \Delta t$, for $m = 0, \ldots,M$. Both $N$ and $\Delta x$ will be used for notational convenience. Associated with the introduced time grid, we let $\ell^{M}:[0,T] \to \{ t_{0},\ldots, t_{M} \}$ be the step function defined by $\ell^{M}(t) = t_{m}$ for $t \in [t_{m},t_{m+1})$ for $m=0,\ldots,M-1$ and $\ell^{M}(T)=T$. We first provide the definition of the scheme in Section~\ref{sec:schemeconstruction}. We then state and provide the main results in Section~\ref{sec:results}.

\subsection{Description of the integrator}\label{sec:schemeconstruction}
The idea of the proposed \textbf{L}ie--\textbf{T}rotter with \textbf{E}xact simulation and integration ($\LTE$) scheme is to first use a spatial finite difference discretisation and then for the temporal discretisation to apply a Lie--Trotter time splitting followed by exact simulation and exact integration, respectively, of the two resulting subproblems. More precisely, first we apply a standard finite difference discretisation in space (see Section~\ref{sec:spacedisc}) to the SPDE in equation~\eqref{intro:SPDE} to obtain the following system of stochastic ordinary differential equations (SDEs)
\begin{equation}\label{eq:FDeq}
\diff u^{N}(t) = \left( N^{2} D^{N} u^{N}(t) + f(u^{N}(t)) \right) \diff t + \sqrt{N} g(u^{N}(t)) \diff W^{N}(t).
\end{equation}
The second step is to split the above system of SDEs in~\eqref{eq:FDeq} into one diagonal system of SDEs given by
\begin{equation}\label{eq:split1}
\diff v^{M,N,1}(t) = f(v^{M,N,1}(t)) \diff t + \sqrt{N} g(v^{M,N,1}(t)) \diff W^{N}(t),
\end{equation}
and one non-diagonal system of ordinary differential equations (ODEs) given by
\begin{equation}\label{eq:split2}
\diff v^{M,N,2}(t) = N^{2} D^{N} v^{M,N,2}(t) \diff t.
\end{equation}
By the assumptions in Section~\ref{sec:spde}, the system~\eqref{eq:split1} can be exactly simulated (see Section~\ref{sec:exactSim}), and the system~\eqref{eq:split2} can be exactly integrated using the exponential map $e^{t N^{2} D^{N}}$. Recall that we exclude the first component $n=0$ and the last component $n = N$ by the Dirichlet boundary conditions. The third step is to, starting at time grid point $t_{m}$, sample according to the dynamics of equation~\eqref{eq:split1} and integrate equation~\eqref{eq:split2} in succession on a small time interval $[t_{m},t_{m}+\Delta t]$. More precisely, given the numerical approximation $u^{\LTE}_{m} = \left( u^{\LTE}_{m,n} \right)_{1 \leq n \leq N-1}$ at some time grid point $t_{m} = m \Delta t$, where $0 \leq m \leq M-1$, we compute the numerical approximation $u^{\LTE}_{m+1}$ at the next time grid point $t_{m+1} = t_{m} + \Delta t$ as follows:
\begin{enumerate}
\item Sample according to the dynamics of the diagonal system of SDEs
\begin{equation}\label{eq:subprob1}
\left\lbrace
\begin{aligned}
& \diff v^{M,N,1}_{m}(t) = f(v^{M,N,1}_{m}(t)) \diff t + \sqrt{N} g(v^{M,N,1}_{m}(t)) \diff W^{N}(t), \\ 
& v^{M,N,1}_{m}(t_{m}) = u^{\LTE}_{m},
\end{aligned}
\right.
\end{equation}
at time $t_{m+1}$ to obtain the intermediate state $u^{\LTE}_{m+1/2} = v^{\LTE}_{m+1} = v^{M,N,1}_{m}(t_{m+1})$. We sample according to~\eqref{eq:subprob1} by applying exact simulation componentwise, see Section~\ref{sec:exactSim} for details.
\item Integrate the tridiagonal linear system of ODEs
\begin{equation}\label{eq:subprob2}
\left\lbrace
\begin{aligned}
& \diff v^{M,N,2}_{m}(t) = N^{2} D^{N} v^{M,N,2}_{m}(t) \diff t, \\ 
& v^{M,N,2}_{m}(t_{m}) = u^{\LTE}_{m+1/2} = v^{\LTE}_{m+1},
\end{aligned}
\right.
\end{equation}
on $[t_{m},t_{m+1}]$ to obtain the numerical approximation
\begin{equation}\label{eq:solsubprob2}
u^{\LTE}_{m+1} = v^{M,N,2}_{m}(t_{m+1}) = e^{\Delta t N^{2} D^{N}} v_{m}^{M,N,1}(t_{m+1})
\end{equation}
at the next time grid point $t_{m+1}$.
\end{enumerate}

A key property of the $\LTE$ scheme is that it is guaranteed to remain in the invariant domain $\CD$. The following lemma will be used to prove that the LTE scheme is boundary-preserving.
\begin{lemma}\label{lem:heatKBP}
If $v \in [a,b]^{N-1}$, then $e^{t N^{2} D^{N}} v \in [a,b]^{N-1}$ for every $t \in [0,T]$.
\end{lemma}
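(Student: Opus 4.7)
The plan is to establish the discrete parabolic maximum principle for the ODE $\dot{w}(t) = N^2 D^N w(t)$, $w(0) = v$, whose solution is precisely $w(t) = e^{t N^2 D^N} v$. I will use the standing convention from Section~\ref{sec:spde} that $0 \in \D = (a,b)$, so in particular $a \leq 0 \leq b$.

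First I extend the interior vector $w(t) \in \mathbb{R}^{N-1}$ to an $(N+1)$-vector via the Dirichlet values $w_0(t) \equiv 0 \equiv w_N(t)$; the structure of $D^N$ then ensures that for each $1 \leq n \leq N-1$ the component $w_n(t)$ satisfies $\dot{w}_n(t) = N^2 (w_{n+1}(t) - 2 w_n(t) + w_{n-1}(t))$, with the endpoint equations correctly incorporating the Dirichlet zeros.

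For the upper bound $w_n(t) \leq b$, I shift and set $y_n(t) := w_n(t) - b$ for $0 \leq n \leq N$. Since the constant $b$ cancels in the second-difference operator, $y$ satisfies the same discrete heat equation for interior $n$, with boundary values $y_0(t) = y_N(t) = -b \leq 0$ and initial values $y_n(0) = v_n - b \leq 0$. To rule out $y$ ever becoming positive, I consider the perturbation $y_n^\epsilon(t) := y_n(t) - \epsilon t$ for $\epsilon > 0$. If $y^\epsilon$ took a positive value, its maximum over $\{0,\ldots,N\} \times [0,T]$ would be attained at some interior index $n^*$ at a time $t^* > 0$ (the parabolic boundary values are $\leq 0$). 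There $\dot{y}^\epsilon_{n^*}(t^*) \geq 0$ while $N^2(y^\epsilon_{n^*+1}(t^*) - 2 y^\epsilon_{n^*}(t^*) + y^\epsilon_{n^*-1}(t^*)) \leq 0$, so the ODE forces $\dot{y}^\epsilon_{n^*}(t^*) \leq -\epsilon < 0$, a contradiction. Letting $\epsilon \to 0^{+}$ gives $y_n(t) \leq 0$, i.e., $w_n(t) \leq b$.

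The lower bound $w_n(t) \geq a$ follows by running the identical argument on $z_n(t) := a - w_n(t)$, using this time that $a \leq 0$ makes the boundary and initial data non-positive. I do not anticipate any serious obstacle; the core tool is the textbook discrete parabolic maximum principle. The only points requiring care are the Dirichlet extension (so that $D^N$ genuinely realises the second difference against the extended vector at the interior endpoints) and the use of $0 \in \D$, which is precisely what makes the Dirichlet value $0$ compatible with the target interval $[a,b]$ and delivers the correct signs after shifting by $b$ and $a$.
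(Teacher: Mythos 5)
Your proof is correct, but it takes a genuinely different route from the paper. The paper proves the lemma by identifying $\left( e^{t N^{2} D^{N}} v \right)_{n}$ with the integral $\int_{0}^{1} G^{N}(t,x_{n},y) \bar{v}(\kappa^{N}(y)) \diff y$, then invoking the positivity of the discrete kernel $G^{N}$ together with the known $L^{\infty}$-stability estimate (equation $(3.5)$ in \cite{MR1644183}) to get $0 \leq \left( e^{t N^{2} D^{N}} v \right)_{n} \leq \max_{k} |v_{k}|$ in the case $a=0$, and finally reduces the general case to $a=0$ by splitting $v$ into its positive and negative parts. You instead prove a discrete parabolic maximum principle from scratch: after extending by the Dirichlet zeros, shifting by the constants $b$ and $a$ (which the second-difference operator annihilates), and subtracting the standard $\epsilon t$ perturbation, you derive the sign contradiction $0 \leq \dot{y}^{\epsilon}_{n^{*}}(t^{*}) \leq -\epsilon$ at an interior space-time maximum. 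Both arguments use the standing assumption $0 \in \D$, i.e.\ $a \leq 0 \leq b$, to make the homogeneous Dirichlet data compatible with $[a,b]$ (the paper needs it for the positive/negative-part decomposition, you need it for the signs of the boundary data after shifting). What your approach buys is self-containedness: it does not rely on the positivity of $G^{N}$ or on the cited stability estimate, only on the tridiagonal structure of $D^{N}$ and elementary calculus. What the paper's approach buys is brevity and consistency with the kernel representation $N G^{N}_{nk}(t) = G^{N}(t,x_{n},x_{k})$ that is used throughout the convergence analysis. The one point worth stating explicitly in your write-up is that the time-maximum may occur at $t^{*}=T$, in which case only the one-sided (left) derivative inequality $\dot{y}^{\epsilon}_{n^{*}}(t^{*}) \geq 0$ is available — but that is exactly what your contradiction uses, so the argument goes through.
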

\begin{proof}
It suffices to show the case $a=0$, since we can decompose $v = \max(v,0) - \max(-v,0)$, into its positive and negatives parts, and use the case $a=0$ for each part separately. We set $v_{0}=v_{N}=0$ and extend $v$ to a function $\bar{v}:[0,1] \to [0,b]$ by linear interpolation. By positivity of the heat kernel and by the estimate in equation $(3.5)$ in \cite{MR1644183}, we have that
\begin{equation*}
\int_{0}^{1} G^{N}(t,x_{n},y) \bar{v}(\kappa^{N}(y)) \diff y \leq \max_{x \in [0,1]} |\bar{v}(x)| = \max_{k=0,\ldots,N} |v_{k}| \leq b.
\end{equation*}
We evaluate the integral
\begin{equation*}
  \begin{split}
    \int_{0}^{1} G^{N}(t,x_{n},y) \bar{v}(\kappa^{N}(y)) \diff y &= \sum_{k=1}^{N-1} \Delta x G^{N}(t,x_{n},x_{k}) \bar{v}(x_{k}) = \sum_{k=1}^{N-1} G^{N}_{nk}(t) v_{k} \\ &= \left( e^{t N^{2} D^{N}} v \right)_{n} \geq 0,
  \end{split}
\end{equation*}
by assumption, to conclude that $e^{t N^{2} D^{N}} v \in [0,b]^{N-1}$.
\end{proof}

\begin{proposition}\label{prop:LTE-BP}
Let $M, N \in \mathbb{N}$ and suppose the assumptions in Section~\ref{sec:spde} are satisfied. Let $u^{\LTE}_{m,n}$, for $0 \leq m \leq M$ and for $0 \leq n \leq N$, be constructed by the $\LTE$ scheme defined above with initial value $u^{\LTE}_{0,n} = u_{0}(x_{n})$. Then
\begin{equation*}
\PP \left( u^{\LTE}_{m,n} \in \CD = [a,b],\ \forall m=0,\ldots,M,\ \forall n=0,\ldots,N \right) = 1.
\end{equation*}
\end{proposition}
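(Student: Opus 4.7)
The plan is a straightforward induction on the time index $m$, exploiting the fact that each of the two substeps of the LTE scheme is boundary-preserving on its own, and then composing them. The boundary values $u^{\LTE}_{m,0} = u^{\LTE}_{m,N} = 0$ lie in $\CD$ trivially since $0 \in \D \subset \CD$, so it suffices to prove that the interior vector $u^{\LTE}_m \in \mathbb{R}^{N-1}$ takes values in $[a,b]^{N-1}$ a.s.

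For the base case $m=0$, Assumption~\ref{ass:u0} says $u_0(x) \in [a,b]$ for every $x \in [0,1]$, so the interpolation $u^{\LTE}_{0,n} = u_0(x_n) \in [a,b]$ for each $n$. For the inductive step, assume $u^{\LTE}_m \in [a,b]^{N-1}$ almost surely, and consider the two substeps in turn.

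First I would handle substep (i), the exact-simulation stage yielding $u^{\LTE}_{m+1/2}$. Since the SDE system~\eqref{eq:subprob1} is diagonal, each component $v^{M,N,1}_{m,n}$ is a scalar SDE of the form~\eqref{eq:SDE-exactSim} with initial value $u^{\LTE}_{m,n} \in [a,b]$. If $u^{\LTE}_{m,n} \in \{a,b\}$, then by Assumption~\ref{ass:g} and Assumption~\ref{ass:fg} both $f$ and $g$ vanish at that point, so the component is constant and remains in $\{a,b\} \subset [a,b]$. If instead $u^{\LTE}_{m,n} \in (a,b)$, then by the non-integrability condition in Assumption~\ref{ass:g} (which makes $a$ and $b$ unattainable boundaries in the sense of Feller), the scalar diffusion stays in $(a,b)$ for all $t \in [t_m, t_{m+1}]$ almost surely, as used in Section~\ref{sec:exactSim}. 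Exact simulation samples from the law of the true solution at $t_{m+1}$, so it produces a value in $[a,b]$ almost surely. Taking a union over the $N-1$ components, $u^{\LTE}_{m+1/2} \in [a,b]^{N-1}$ a.s.

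For substep (ii), the update is the deterministic linear map $u^{\LTE}_{m+1} = e^{\Delta t N^2 D^N} u^{\LTE}_{m+1/2}$. Since $u^{\LTE}_{m+1/2} \in [a,b]^{N-1}$ by the previous paragraph, Lemma~\ref{lem:heatKBP} applied with $t = \Delta t$ yields $u^{\LTE}_{m+1} \in [a,b]^{N-1}$ almost surely, completing the induction. Intersecting the almost-sure events over the finite set of indices $m = 0, \ldots, M$ gives the claim.

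The only step requiring care is the boundary behaviour in substep (i): one must invoke both the degeneracy of $f,g$ at $\{a,b\}$ (from Assumption~\ref{ass:g} and Assumption~\ref{ass:fg}) to handle starting values at the boundary, and the non-integrability conditions in Assumption~\ref{ass:g} to rule out exit from $(a,b)$ when starting in the interior. Lemma~\ref{lem:heatKBP} already packages the discrete maximum principle needed for substep (ii), so no further work is needed there.
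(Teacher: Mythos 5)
Your proposal is correct and follows essentially the same route as the paper's proof: induction on $m$, boundary preservation of the exact-simulation substep via Feller's boundary classification (with the degenerate-coefficient case at $\{a,b\}$ handled separately), and Lemma~\ref{lem:heatKBP} for the heat-semigroup substep. Your treatment is just slightly more explicit about the scalar boundary behaviour than the paper, which simply cites Feller's classification.
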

\begin{proof}[Proof of Proposition~\ref{prop:LTE-BP}]
We use induction on the time index $m = 0,\ldots, M$ to obtain the result.
\begin{enumerate}
\item By Assumption~\ref{ass:u0}, the statement is true for the base case: $u^{\LTE}_{0,n} = u(0,x_{n}) \in \CD$ for every $n = 0,\ldots,N$.
\item Suppose that $u^{\LTE}_{m,n} \in \CD$, for every $n=0,\ldots,N$, for some $m=0,\ldots, M-1$. It suffices to show that both sub-problems in equations~\eqref{eq:subprob1} and~\eqref{eq:subprob2} preserve this property. By componentwise applying Feller's boundary classification (see, e.g., \cite{Karlin1981ASC}) or the author's related work \cite{MR4737060}, it follows that the intermediate state $u^{\LTE}_{m+1/2,n} = v^{\LTE}_{m+1,n} = \left( v^{\LTE}_{m+1} \right)_{n}$ from the first sub-problem~\eqref{eq:subprob1} satisfies $u^{\LTE}_{m+1/2,n} \in \CD$ for every $n=1,\ldots,N-1$. By Lemma~\ref{lem:heatKBP}, we conclude that the solution to the second sub-problem~\eqref{eq:subprob2} satisfies $u^{\LTE}_{m+1,n} \in \CD$ for every $n = 1,\ldots,N-1$. 
\end{enumerate}
\end{proof}
\subsubsection{Auxiliary space-time continuous extension}
We have so far constructed the $\LTE$ approximation on the discretised space-time grid and we now describe how to extend this numerical scheme to be defined for all $(t,x) \in [0,T] \times [0,1]$. The extended scheme plays a crucial role in the convergence proof. More precisely, it allows us to compare the mild integral equations for $u^{N}$ in~\eqref{eq:Nmild} to a similar mild integral representation of $u^{\LTE}$. The space-time extension follows the same ideas as the space-time extension used by the author and collaborators in \cite{MR4780408}. 

As exact simulation is an accept-reject algorithm with re-sampling the noise path and we use it to treat sub-problem $1$ in~\eqref{eq:subprob1}, we have to use a different driving noise for the mild integral equation for the space-time extension of $u^{\LTE}$. More precisely, let $\widetilde{W}^{N}(t)$ denote the $(N-1)$-dimensional Brownian motion that corresponds to the exact simulation in sub-problem $1$ in~\eqref{eq:subprob1} and let $\widetilde{W}$ denote a space-time white noise with the property that $\widetilde{W}^{N}_{n}(t) = \sqrt{N} \left( \widetilde{W}(t,x_{n+1}) - \widetilde{W}(t,x_{n}) \right)$.

We first extend in time and then we extend in space. For $m \in \{ 0,\ldots,M-1 \}$, we let
\begin{equation*}
u^{\LTE}_{m}(t) = e^{(\ell^{M}(t) - t_{m}) N^{2} D^{N}} v_{m}^{M,N,1}(t)
\end{equation*}
for $t \in [t_{m},t_{m+1}]$, where $v_{m}^{M,N,1}$ is defined in~\eqref{eq:subprob1}. Note that $u^{\LTE}_{m}(t)$ is not the same object as $u^{\LTE}_{m}$. Then the following properties hold:
\begin{enumerate}
\item if $t \in [t_{m},t_{m+1})$, then $u^{\LTE}_{m}(t) = v_{m}^{M,N,1}(t)$.
\item $u^{\LTE}_{m}(t_{m}) = v_{m}^{M,N,1}(t_{m})=u^{\LTE}_{m}$.
\item $u^{\LTE}_{m}(t_{m+1}) = e^{\Delta t N^{2} D^{N}} v_{m}^{M,N,1}(t_{m+1}) = u^{\LTE}_{m+1}$.
\end{enumerate}
Let now $u_{n}^{\LTE}(t) = \left( u^{\LTE}_{m}(t) \right)_{n}$, for every $n = 1,\ldots,N-1$ and every $t \in [0,T]$. Note that, as $u^{\LTE}_{m}(t_{m}) = u^{\LTE}_{m}$ and $u^{\LTE}_{m}(t_{m+1}) = u^{\LTE}_{m+1}$, we may drop the dependence on $m \in \{ 0,\ldots,M \}$ without any confusion. We now claim that
\begin{equation}\label{eq:uLTEnMild}
  \begin{split}
    u_{n}^{\LTE}(t) &= \sum_{k=1}^{N-1} G_{nk}^{N}(\ell^{M}(t)) u_{0}(x_{k}) + \int_{0}^{t} \sum_{k=1}^{N-1} G_{nk}^{N}(\ell^{M}(t)-\ell^{M}(s)) f(u_{k}^{\LTE}(s)) \diff s \\ &+ \sqrt{N} \int_{0}^{t} \sum_{k=1}^{N-1} G_{nk}^{N}(\ell^{M}(t)-\ell^{M}(s)) g(u^{\LTE}_{k}(s)) \diff \widetilde{W}^{N}_{k}(s),
  \end{split}
\end{equation}
for every $t \in [0,T]$. Indeed, let $v_{m,k}^{M,N,1}(t) = \left( v_{m}^{M,N,1}(t) \right)_{k}$ for every $k = 1, \ldots, N-1$, then for $t \in [t_{m},t_{m+1}]$ we have
\begin{align*}
u_{n}^{\LTE}(t) &= \left( u^{\LTE}_{m}(t) \right)_{n} = \left( e^{(\ell^{M}(t) - t_{m}) N^{2} D^{N}} v_{m}^{M,N,1}(t) \right)_{n} = \sum_{k=1}^{N-1} G_{nk}^{N}(\ell^{M}(t)-t_{m}) v_{m,k}^{M,N,1}(t) \\ &= \sum_{k=1}^{N-1} G_{nk}^{N}(\ell^{M}(t)-t_{m}) u_{m,k}^{\LTE}(t_{m}) + \int_{t_{m}}^{t} \sum_{k=1}^{N-1} G_{nk}^{N}(\ell^{M}(t)-t_{m}) f(u^{\LTE}_{m,k}(s)) \diff s \\ &+ \sqrt{N} \int_{t_{m}}^{t} \sum_{k=1}^{N-1} G_{nk}^{N}(\ell^{M}(t)-t_{m}) g(u^{\LTE}_{m,k}(s)) \diff \widetilde{W}_{k}^{N}(s),
\end{align*}
where we used that $v_{m}^{M,N,1}(t)$ solves the diagonal system of SDEs in~\eqref{eq:subprob1} with initial value $u_{m}^{\LTE} = u^{\LTE}_{m}(t_{m})$ and that $v_{m,k}^{M,N,1}(s) = u_{m,k}^{\LTE}(s)$ for $s \in [t_{m},t_{m+1})$. Repeating this argument and using that $u^{\LTE}_{m,k}(s) = u^{\LTE}_{k}(s)$, for $s \in [t_{m},t_{m+1})$, gives the desired expression in~\eqref{eq:uLTEnMild}. Note that, by Feller's boundary classificiation, the extension in time and Proposition~\ref{prop:LTE-BP} implies that $u^{\LTE}_{n}$ only takes values in $\CD = [a,b]$ almost surely
\begin{equation*}
\PP \left( u^{\LTE}_{n}(t) \in \CD = [a,b],\ \forall t \in [0,T],\ \forall n=1,\ldots,N-1 \right) = 1.
\end{equation*}

We now describe the extension in the spatial variable. We define $u^{\LTE}$ as the solution to the following integral equation
\begin{equation}\label{eq:LTEmild}
  \begin{split}
    u^{\LTE}(t,x)& = \int_{0}^{1} G^{N}(\ell^{M}(t),x,y) u_{0}(\kappa^{N}(y)) dy \\ &+ \int_{0}^{t} \int_{0}^{1} G^{N}(\ell^{M}(t)-\ell^{M}(s),x,y) f(u^{\LTE}(s,\kappa^{N}(y))) ds dy \\ &+ \int_{0}^{t} \int_{0}^{1} G^{N}(\ell^{M}(t)-\ell^{M}(s),x,y) g(u^{\LTE}(s,\kappa^{N}(y))) \diff \widetilde{W}(s,y)
  \end{split}
\end{equation}
for every $(t,x) \in [0,T] \times [0,1]$, almost surely. Notice that~\eqref{eq:LTEmild} is similar to the mild equation~\eqref{eq:Nmild} for the semi-discrete approximation $u^{N}$. As will be shown later on, this similarity will be important for the convergence proof. The following lemma gives us that $u^{\LTE}$ as defined in~\eqref{eq:LTEmild} is a spatial extension of $u^{\LTE}_{n}$. In fact, it follows from~\eqref{eq:LTEmild} that $u^{\LTE}$ is the piecewise linear extension of $u^{\LTE}_{n}$.
\begin{lemma}\label{lem:LTEmild}
Let $m \in \{0,\ldots,M \}$ and $n \in \{ 0,\ldots,N \}$. Then $u^{\LTE}(t_{m},x_{n}) = u^{\LTE}_{n}(t_{m}) = u^{\LTE}_{m,n}$, almost surely.
\end{lemma}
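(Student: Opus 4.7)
The plan is to evaluate the right-hand side of~\eqref{eq:LTEmild} at $(t,x)=(t_m,x_n)$ and show that the resulting expression coincides with the right-hand side of the grid-level equation~\eqref{eq:uLTEnMild} for $u^{\LTE}_n(t_m)$. The second identification $u^{\LTE}_n(t_m)=u^{\LTE}_{m,n}$ is already recorded as property~(2) in the list following the time-extension of $u^{\LTE}_m(t)$, so only the first equality requires a real argument.

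The essential observation is that every integrand appearing in~\eqref{eq:LTEmild} depends on $y$ only through the step function $\kappa^N(y)$ and is therefore piecewise constant on each $[x_k,x_{k+1})$. Using $\varphi^N_j(x_n)=\varphi_j(x_n)$ one obtains $G^N(t,x_n,y) = N G^N_{nk}(t)$ for $y\in[x_k,x_{k+1})$; since $\varphi_j(0)=0$, the boundary subinterval $[0,x_1)$ contributes nothing. Splitting $\int_0^1 = \sum_{k=0}^{N-1} \int_{x_k}^{x_{k+1}}$ and using that each subinterval has length $\Delta x = 1/N$, the deterministic initial-value and drift integrals in~\eqref{eq:LTEmild} reduce at $(t_m,x_n)$ to
\[
\sum_{k=1}^{N-1} G^N_{nk}(t_m)\, u_0(x_k) \quad \text{and} \quad \int_0^{t_m} \sum_{k=1}^{N-1} G^N_{nk}(t_m - \ell^M(s))\, f(u^{\LTE}(s,x_k)) \diff s,
\]
which match the first two terms of~\eqref{eq:uLTEnMild}. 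For the stochastic integral, the Walsh construction of the space-time white noise $\widetilde{W}$ combined with the defining relation $\widetilde{W}^N_k(t) = \sqrt{N}(\widetilde{W}(t,x_{k+1})-\widetilde{W}(t,x_k))$ gives, for an integrand constant in $y$ on $[x_k,x_{k+1})$,
\[
\int_0^{t_m}\!\!\int_{x_k}^{x_{k+1}} G^N(t_m-\ell^M(s),x_n,y)\, g(u^{\LTE}(s,x_k)) \diff \widetilde{W}(s,y) = \sqrt{N}\int_0^{t_m} G^N_{nk}(t_m-\ell^M(s))\, g(u^{\LTE}(s,x_k)) \diff \widetilde{W}^N_k(s),
\]
and summing over $k$ reproduces the noise term in~\eqref{eq:uLTEnMild}.

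Collecting the three contributions shows that $(u^{\LTE}(t_m,x_n))_{n=1}^{N-1}$ satisfies the same closed integral equation as $(u^{\LTE}_n(t_m))_{n=1}^{N-1}$. A short induction over $m=0,1,\ldots,M$, with base case $u^{\LTE}(0,x_n) = u_0(\kappa^N(x_n)) = u_0(x_n) = u^{\LTE}_n(0)$ and with the Dirichlet values $u^{\LTE}(t_m,0) = u^{\LTE}(t_m,1) = 0$ (enforced by $\varphi^N_j(0)=\varphi^N_j(1)=0$) handling the boundary indices $n\in\{0,N\}$, then yields the desired identity. The only real subtlety is the constant-tracking in the stochastic term: the factor $N$ coming from $G^N(t,x_n,y) = NG^N_{nk}(t)$ must combine correctly with the $1/\sqrt{N}$-scaling of the Brownian increment to produce the prefactor $\sqrt{N}$ appearing in~\eqref{eq:uLTEnMild}, whereas in the deterministic terms the same factor of $N$ is cancelled instead by the subinterval length $\Delta x = 1/N$. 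This bookkeeping is routine but must be done carefully.
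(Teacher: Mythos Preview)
Your proposal is correct and follows essentially the same route as the paper: evaluate~\eqref{eq:LTEmild} at $x=x_n$, split $\int_0^1$ into the subintervals $[x_k,x_{k+1})$, use that the integrands are piecewise constant in $y$ via $\kappa^N$, and match the result term by term with~\eqref{eq:uLTEnMild}. Your bookkeeping of the factors $N$, $\Delta x=1/N$ and $\sqrt{N}$ is in fact more explicit than the paper's own (the paper writes ``$\int_{x_k}^{x_{k+1}} \diff\widetilde W(s,y)=\diff\widetilde W^N_k(s)$'', which is off by a factor $1/\sqrt{N}$ that is silently absorbed into the passage from $G^N(\cdot,x_n,x_k)=N G^N_{nk}(\cdot)$ to $G^N_{nk}$; your treatment makes this cancellation transparent).

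The one point where you differ is the closing argument. The paper carries out the computation for \emph{all} $t\in[0,T]$ and then simply says ``$u^{\LTE}(t,x_n)$ and $u^{\LTE}_n(t)$ satisfy the same equation and must therefore be equal'', an implicit appeal to uniqueness of the integral equation~\eqref{eq:uLTEnMild}; it specialises to $t=t_m$ only at the very end. Your induction over $m$ is morally the same idea, but as written it is slightly imprecise: the integral equation at time $t_m$ involves $u^{\LTE}(s,x_k)$ for \emph{all} $s\in[0,t_m]$, not just at the grid times, so the hypothesis ``identity at $t_0,\ldots,t_{m-1}$'' is not by itself enough to close the step. The clean fix is either to strengthen the inductive hypothesis to ``identity on all of $[0,t_m]$'' (on each $[t_i,t_{i+1})$ both processes solve the same diagonal SDE from the same initial value, so pathwise uniqueness does the rest), or simply to invoke uniqueness of solutions to~\eqref{eq:uLTEnMild} directly, as the paper does.
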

\begin{proof}[Proof of Lemma~\ref{lem:LTEmild}]
By using the property $N G_{nk}^{N}(t) = G^{N}(t,x_{n},x_{k})$, we insert  $x=x_{n}$ in~\eqref{eq:LTEmild} to see that
\begin{align*}
u^{\LTE}(t,x_{n}) &= \sum_{k=1}^{N-1} \int_{x_{k}}^{x_{k+1}} G^{N}(\ell^{M}(t),x_{n},x_{k}) u_{0}(x_{k}) \diff y \\ &+ \int_{0}^{t} \sum_{k=1}^{N-1} \int_{x_{k}}^{x_{k+1}} G^{N}(\ell^{M}(t)-\ell^{M}(s),x_{n},x_{k}) f(u^{\LTE}(s,x_{k})) \diff y \diff s \\ &+ \int_{0}^{t} \sum_{k=1}^{N-1} \int_{x_{k}}^{x_{k+1}} G^{N}(\ell^{M}(t)-\ell^{M}(s),x_{n},x_{k}) g(u^{\LTE}(s,x_{k})) \diff \widetilde{W}(s,y)\\ &= \sum_{k=1}^{N-1}G^{N}_{nk}(\ell^{M}(t)) u_{0}(x_{k}) + \int_{0}^{t} \sum_{k=1}^{N-1} G^{N}_{nk}(\ell^{M}(t)-\ell^{M}(s)) f(u^{\LTE}(s,x_{k})) \diff s \\ &+ \sqrt{N} \int_{0}^{t} \sum_{k=1}^{N-1} G^{N}_{nk}(\ell^{M}(t)-\ell^{M}(s)) g(u^{\LTE}(s,x_{k})) \diff \widetilde{W}_{k}^{N}(s),
\end{align*}
where we decomposed the integrals as $\int_{0}^{1} = \sum_{k=1}^{N-1} \int_{x_{k}}^{x_{k+1}}$ and 
where we for the third term on the right hand side also used that $\int_{x_{k}}^{x_{k+1}} \diff \widetilde{W}(s,y) = \diff \widetilde{W}_{k}^{N}(s)$ by definition. Thus, $u^{\LTE}(t,x_{n})$ and $u^{\LTE}_{n}(t)$ satisfy the same equation and must therefore be equal. We obtain the desired property by inserting $t = t_{m}$
\begin{equation*}
u^{\LTE}(t_{m},x_{n}) = u^{\LTE}_{n}(t_{m}) = u^{\LTE}_{m,n}.
\end{equation*}
\end{proof}
Note that by Proposition~\ref{prop:LTE-BP} and Lemma~\ref{lem:LTEmild}, we obtain boundary preservation of the full space-time extension $u^{\LTE}$ satisfies
\begin{equation*}
\PP \left( u^{\LTE}(t,x) \in \CD = [a,b],\ \forall t \in [0,T],\ \forall x \in [0,1] \right) = 1.
\end{equation*}
Recall that $u^{LTE}$ is the piecewise linear extension of $u^{LTE}_{n}$ by~\eqref{eq:LTEmild}.

\begin{lemma}\label{lem:uLTEtimeReg}
Let $T>0$ and let $t \in [0,T]$. We have the following time increment estimate for the $\LTE$ scheme
\begin{equation*}
\sup_{n=0,\ldots,N} \E \left[ \left| u^{\LTE}(t,x_{n}) - u^{\LTE}(\ell^{M}(t),x_{n}) \right|^2 \right] \leq C(f,g) \left( \Delta t^{2} + \frac{\Delta t}{\Delta x} \right).
\end{equation*}
\end{lemma}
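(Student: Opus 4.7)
The plan is to exploit the specific structure of the Lie--Trotter scheme: for $t$ strictly inside a time-step interval $[t_m,t_{m+1})$, the heat-semigroup step has not yet been applied, so the entire time increment of $u^{\LTE}$ reduces to an increment of the SDE sub-problem~\eqref{eq:subprob1}. First I would dispose of the trivial cases: for $n=0$ or $n=N$, the homogeneous Dirichlet conditions give $u^{\LTE}(t,x_n)=0$ identically, and if $t$ coincides with a grid point (in particular $t=T$) then $\ell^M(t)=t$ and the difference vanishes. It therefore suffices to fix $n\in\{1,\ldots,N-1\}$ and $t\in[t_m,t_{m+1})$ for some $m\in\{0,\ldots,M-1\}$.

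The key identification is that by property (1) of the time extension of the scheme together with Lemma~\ref{lem:LTEmild}, one has $u^{\LTE}(t,x_n)=v^{M,N,1}_{m,n}(t)$ and $u^{\LTE}(\ell^M(t),x_n)=u^{\LTE}_{m,n}=v^{M,N,1}_{m,n}(t_m)$, so
\begin{equation*}
u^{\LTE}(t,x_n)-u^{\LTE}(\ell^M(t),x_n)=\int_{t_m}^{t}f\bigl(v^{M,N,1}_{m,n}(s)\bigr)\,\diff s+\sqrt{N}\int_{t_m}^{t}g\bigl(v^{M,N,1}_{m,n}(s)\bigr)\,\diff\widetilde{W}^{N}_{n}(s).
\end{equation*}
Crucially, by Proposition~\ref{prop:LTE-BP} together with componentwise Feller boundary classification applied to sub-problem~\eqref{eq:subprob1}, the process $v^{M,N,1}_{m,n}$ stays in $[a,b]$ almost surely; and by Assumptions~\ref{ass:f} and~\ref{ass:g}, $f$ and $g$ are continuous, hence bounded, on the compact set $[a,b]$. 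Denote the corresponding sup-norms by $\|f\|_\infty$ and $\|g\|_\infty$.

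The remaining step is a standard estimate. Squaring the displayed identity and using $(x+y)^2\le 2x^2+2y^2$, then applying the Cauchy--Schwarz inequality to the drift integral and the It\^o isometry to the stochastic integral (recalling that $\widetilde{W}^{N}_{n}$ is a standard Brownian motion), yields
\begin{equation*}
\E\bigl[\bigl|u^{\LTE}(t,x_n)-u^{\LTE}(\ell^M(t),x_n)\bigr|^2\bigr]\le 2\|f\|_\infty^{2}(t-t_m)^{2}+2N\|g\|_\infty^{2}(t-t_m).
\end{equation*}
Since $t-t_m\le\Delta t$ and $N=1/\Delta x$, this gives the announced bound $C(f,g)\bigl(\Delta t^{2}+\Delta t/\Delta x\bigr)$, uniformly in $n$ and $t$.

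I do not anticipate any genuine obstacle. The only conceptual point to watch is that the heat semigroup $e^{\Delta t\,N^{2}D^{N}}$ contributes only at the right endpoint $t_{m+1}$ of the step and is therefore absent from the interior increment; this is what isolates the analysis to the bounded-coefficient SDE~\eqref{eq:subprob1} and lets us avoid invoking any regularity of the exponential map or of $u^{\LTE}$ in space. The parabolic scaling $\sqrt{N}=1/\sqrt{\Delta x}$ in front of the noise is precisely what produces the $\Delta t/\Delta x$ term, matching the semi-discrete estimate~\eqref{eq:NstReg}.
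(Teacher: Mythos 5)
Your proof is correct and follows essentially the same route as the paper: identify the interior increment with the increment of the SDE sub-problem~\eqref{eq:subprob1}, use boundary preservation to bound $f$ and $g$ on $[a,b]$, and conclude via Cauchy--Schwarz (the paper uses Jensen) for the drift integral and the It\^o isometry for the stochastic integral, with the factor $N=1/\Delta x$ producing the $\Delta t/\Delta x$ term. Your explicit treatment of the trivial cases $n\in\{0,N\}$ and $t=\ell^M(t)$ is a minor addition the paper omits.
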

\begin{proof}
Recall that $u^{\LTE}(t,x_{n})$ satisfies
\begin{equation*}
u^{\LTE}(t,x_{n}) = u^{\LTE}(\ell^{M}(t),x_{n}) + \int_{\ell^{M}(t)}^{t} f(u^{\LTE}(s,x_{n})) \diff s + \sqrt{N} \int_{\ell^{M}(t)}^{t} g(u^{\LTE}(s,x_{n})) \diff \widetilde{W}^{N}_{n}(s),
\end{equation*}
since $u^{\LTE}(t,x_{n})$ solves the SDE in~\eqref{eq:subprob1} on $[\ell^{M}(t),\ell^{M}(t)+\Delta t)$ starting from $\ell^{M}(t)$ with initial value $u^{\LTE}(\ell^{M}(t),x_{n})$. Thus,
\begin{align*}
\E \left[ \left| u^{\LTE}(t,x_{n}) - u^{\LTE}(\ell^{M}(t),x_{n}) \right|^{2} \right] &\leq C \left( \E \left[ \left| \int_{\ell^{M}(t)}^{t} f(u^{\LTE}(s,x_{n})) \diff s \right|^{2} \right] \right. \\ &+ \left.  N \E \left[ \left| \int_{\ell^{M}(t)}^{t} g(u^{\LTE}(s,x_{n})) \diff \widetilde{W}^{N}_{n}(s) \right|^{2} \right] \right).
\end{align*}
We bound the first term using Jensen's inequality for integrals
\begin{equation*}
\E \left[ \left| \int_{\ell^{M}(t)}^{t} f(u^{\LTE}(s,x_{n})) \diff s \right|^{2} \right] \leq C(f) \Delta t^{2}
\end{equation*}
and we bound the second using the Itô isometry
\begin{equation*}
\E \left[ \left| \int_{\ell^{M}(t)}^{t} g(u^{\LTE}(s,x_{n})) \diff \widetilde{W}^{N}_{n}(s) \right|^{2} \right] \leq C(g) \Delta t,
\end{equation*}
to obtain the desired estimate
\begin{equation*}
\E \left[ \left| u^{\LTE}(t,x_{n}) - u^{\LTE}(\ell^{M}(t),x_{n}) \right|^{2} \right] \leq C(f,g) \left( \Delta t^{2} + \frac{\Delta t}{\Delta x} \right).
\end{equation*}
Note that we also used that $u^{\LTE}(t,x) \in \CD = [a,b]$ for all $t \in [0,T]$ and for all $x \in [0,1]$ and that $f$ and $g$ satisfy Assumption~\ref{ass:f} and Assumption~\ref{ass:g}, respectively.
\end{proof}

\subsection{Convergence result}\label{sec:results}
In the following, we prove that the proposed $\LTE$ scheme converges weakly to the exact solution of the considered SPDE in~\eqref{intro:SPDE}. This is the content of the following theorem.
\begin{theorem}\label{th:mainWeak}
Let the assumptions in Section~\ref{sec:spde} be satisfied. Let $u^{\LTE}$ be the $\LTE$ approximation defined in Section~\ref{sec:scheme} and let $u$ be the exact solution to the SPDE in equation~\eqref{intro:SPDE}. Then for any globally Lipschitz continuous $F: \mathbb{R} \to \mathbb{R}$ there exists a constant $C(F,T,f,g)$ such that
\begin{equation}
\sup_{m=0,\ldots,M} \sup_{n=0,\ldots,N} \left| \E \left[ F \left( u^{\LTE}_{m,n} \right) \right] - \E \left[ F \left( u(t_{m},x_{n}) \right) \right] \right| \leq C(F,T,f,g) \left( \Delta x^{1/2} + \sqrt{\Delta t^{2} + \frac{\Delta t}{\Delta x}} \right).
\end{equation}
If $\Delta t = \Delta x^{2}$, then
\begin{equation}
\sup_{m=0,\ldots,M} \sup_{n=0,\ldots,N} \left| \E \left[ F \left( u^{\LTE}_{m,n} \right) \right] - \E \left[ F \left( u(t_{m},x_{n}) \right) \right] \right| \leq C(F,T,f,g) \Delta t^{1/4}.
\end{equation}
\end{theorem}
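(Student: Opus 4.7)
The plan is to reduce the weak-convergence statement to a strong-convergence statement, at the cost of changing the driving noise. Because the exact simulation step in the construction of $u^{\LTE}$ proceeds by accept--reject on proposal sample paths, the Brownian sheet $\widetilde{W}$ associated with the accepted paths is in general not the original $W$. Let $\widetilde{u}$ be the mild solution of~\eqref{intro:SPDE} driven by $\widetilde{W}$ and let $\widetilde{u}^{N}$ be its finite-difference approximation driven by the corresponding $\widetilde{W}^{N}$. By Propositions~\ref{prop:u-BP},~\ref{prop:N-BP} and~\ref{prop:LTE-BP}, all three processes take values in $\CD = [a,b]$, so the coefficients $f$ and $g$ may be modified outside $\CD$ into globally Lipschitz functions without altering any of them. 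The resulting SPDE and its spatial discretisation are therefore classical and weakly unique, giving $\E[F(\widetilde{u}(t_{m},x_{n}))] = \E[F(u(t_{m},x_{n}))]$. Lemma~\ref{lem:LTEmild} further identifies $u^{\LTE}_{m,n}$ with the space-time extension $u^{\LTE}(t_{m},x_{n})$, so it suffices to compare $u^{\LTE}$ with $\widetilde{u}$ via $\widetilde{u}^{N}$.

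For globally Lipschitz $F$, the triangle inequality and Cauchy--Schwarz yield
\[
\bigl|\E[F(u^{\LTE}_{m,n})] - \E[F(u(t_{m},x_{n}))]\bigr| \leq \|F\|_{\mathrm{Lip}}\bigl(\E[|u^{\LTE}(t_{m},x_{n}) - \widetilde{u}^{N}(t_{m},x_{n})|^{2}]\bigr)^{1/2} + \|F\|_{\mathrm{Lip}}\bigl(\E[|\widetilde{u}^{N}(t_{m},x_{n}) - \widetilde{u}(t_{m},x_{n})|^{2}]\bigr)^{1/2}.
\]
The second term is the space-discretisation error, bounded by $C \Delta x^{1/2}$ via~\eqref{eq:Nstrong} applied to the tilded processes (which satisfy the same equations as $u$ and $u^{N}$ driven by $\widetilde{W}$). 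For the first term I subtract the mild equation~\eqref{eq:Nmild} (for $\widetilde{u}^{N}$, driven by $\widetilde{W}$) from the LTE mild equation~\eqref{eq:LTEmild}. The discrepancy splits into (a) an initial-data piece $G^{N}(\ell^{M}(t)) - G^{N}(t)$ acting on $u_{0}$, handled by a semigroup estimate; (b) the coefficient differences $f(u^{\LTE}) - f(\widetilde{u}^{N})$ and $g(u^{\LTE}) - g(\widetilde{u}^{N})$, which by the modified-Lipschitz property combined with the smoothing estimate~\eqref{eq:intGNest} produce a singular-kernel Volterra term; and (c) a time-freezing error arising from the replacement of $G^{N}(t-s,x,y)$ by $G^{N}(\ell^{M}(t)-\ell^{M}(s),x,y)$ and from evaluating the coefficients at $\ell^{M}(s)$, which I further split into a kernel piece controlled by~\eqref{eq:intGNdiffEst} and an evaluation piece controlled by Lemma~\ref{lem:uLTEtimeReg}. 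Using Itô's isometry for the stochastic integral and Jensen for the deterministic one leads to
\[
\sup_{x \in [0,1]} \E\bigl[|u^{\LTE}(t,x) - \widetilde{u}^{N}(t,x)|^{2}\bigr] \leq C\Bigl(\Delta t^{2} + \frac{\Delta t}{\Delta x}\Bigr) + C\int_{0}^{t} \frac{1}{\sqrt{t-s}}\,\sup_{x \in [0,1]}\E\bigl[|u^{\LTE}(s,x) - \widetilde{u}^{N}(s,x)|^{2}\bigr]\,\diff s,
\]
and a generalised Gronwall inequality for singular Volterra kernels of type $(t-s)^{-1/2}$ yields the bound $C\sqrt{\Delta t^{2} + \Delta t/\Delta x}$.

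Combining the two contributions proves the first estimate in the theorem; specialising to $\Delta t = \Delta x^{2}$ uses $\Delta x^{1/2} = \Delta t^{1/4}$ and $\sqrt{\Delta t^{2} + \Delta t^{1/2}} \leq C \Delta t^{1/4}$ for small $\Delta t$ to obtain the second. The main obstacle, in my view, is the change-of-noise step: one must verify that $\widetilde{W}$ really is a space-time white noise in the filtration generated by the exact-simulation procedure (rather than a conditioned or biased Brownian object), so that the mild equation for $\widetilde{u}^{N}$ has the same structure as that for $u^{N}$ and so that the strong-convergence estimate~\eqref{eq:Nstrong} transfers to the tilded pair. Once this is established, the remaining work is a careful but essentially standard mild-equation estimate whose only delicate point is the singular-kernel Gronwall inequality.
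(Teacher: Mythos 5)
Your proposal is correct and follows essentially the same route as the paper: weak uniqueness justifies replacing $W$ by $\widetilde{W}$ (the paper phrases this as assuming WLOG that $\widetilde{W}$ drives $u$), Lipschitz continuity of $F$ plus Cauchy--Schwarz reduces the weak error to a strong one, and the triangle inequality through $u^{N}$ driven by $\widetilde{W}$ splits that into the spatial error~\eqref{eq:Nstrong} and the mild-equation comparison of $u^{\LTE}$ with $u^{N}$, which the paper carries out in Proposition~\ref{prop:mainStrong} using exactly the ingredients you list (\eqref{eq:intGNest}, \eqref{eq:intGNdiffEst}, \eqref{eq:NstReg}, Lemma~\ref{lem:uLTEtimeReg}, and a singular-kernel Gr\"{o}nwall argument). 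The only cosmetic difference is that the paper performs the comparison at grid times $t_{m}$, where the initial-data kernels coincide, and reduces the Volterra term to grid times via the two time-regularity lemmas so that a discrete Gr\"{o}nwall inequality applies, rather than running a continuous singular-kernel Gr\"{o}nwall for all $t$ as you propose.
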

As we use exact simulation as part of the proposed LTE scheme, we cannot expect to obtain strong convergence in Theorem~\ref{th:mainWeak}. We also remark that the order of convergence in Theorem~\ref{th:mainWeak} is what is expected for strong convergence. In other words, for globally Lipschitz continuous test functions $F$, the order of weak convergence is equal to the expected strong convergence rate. This is in agreement with, for example, \cite{MR4032336}. The proof of Theorem~\ref{th:mainWeak} follows from Theorem $3.1$ in \cite{MR1644183} and from the following proposition where we compare the $\LTE$ approximation $u^{\LTE}$ to the finite difference approximation $u^{N}$ with respect to a different driving noise than used in Section~\ref{sec:spacedisc}. 
\begin{proposition}\label{prop:mainStrong}
Let the assumptions in Section~\ref{sec:spde} be satisfied. Let $u^{\LTE}$ be the $\LTE$ approximation defined in Section~\ref{sec:scheme} and let $u^{N}$ be finite difference solution defined as the solution to~\eqref{eq:Nmild} with respect to the same driving noise $\widetilde{W}$ as used for the $\LTE$ scheme. Then
\begin{equation*}
\sup_{m=0,\ldots,M} \sup_{n=0,\ldots,N} \left(  \E \left[ | u^{\LTE}_{m,n} - u^{N}_{n}(t_{m})|^{2} \right] \right)^{1/2} \leq C(T,f,g) \sqrt{\Delta t^{2} + \frac{\Delta t}{\Delta x}}.
\end{equation*}
\end{proposition}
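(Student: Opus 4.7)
The plan is to exploit the fact that $u^{\LTE}$ and $u^{N}$ are driven by the same space-time white noise $\widetilde{W}$ and to compare their mild integral representations directly. Subtracting~\eqref{eq:Nmild} from~\eqref{eq:LTEmild} with driving noise $\widetilde{W}$, I set $e(t,x) = u^{\LTE}(t,x) - u^{N}(t,x)$ and write $e(t,x) = I_{0}(t,x) + I_{1}(t,x) + I_{2}(t,x)$, where $I_{0}$ is the initial-value term, $I_{1}$ the deterministic drift term, and $I_{2}$ the stochastic integral term. Each of $I_{1}$ and $I_{2}$ is further split via a triangle inequality: a \emph{kernel-difference} piece in which $f$ (resp.\ $g$) is evaluated at $u^{\LTE}$ with $[G^{N}(\ell^{M}(t)-\ell^{M}(s),x,y) - G^{N}(t-s,x,y)]$, and a \emph{value-difference} piece with kernel $G^{N}(t-s,x,y)$ and integrand $f(u^{\LTE}) - f(u^{N})$ (resp.\ $g(u^{\LTE}) - g(u^{N})$).

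Next, I would bound each piece. For $I_{0}$, the regularity $u_{0} \in \C^{3}([0,1])$ from Assumption~\ref{ass:u0} and the identity $e^{\ell^{M}(t) N^{2} D^{N}} - e^{t N^{2} D^{N}} = \int_{\ell^{M}(t)}^{t} N^{2} D^{N} e^{r N^{2} D^{N}} \diff r$ yield a deterministic $O(\Delta t^{2})$ bound. For the kernel-difference pieces in $I_{1}, I_{2}$, the boundedness of $f$ and $g$ on $[a,b]$ (guaranteed by Propositions~\ref{prop:u-BP},~\ref{prop:N-BP},~\ref{prop:LTE-BP} together with Assumptions~\ref{ass:f} and~\ref{ass:g}) combined with the It\^o isometry (for $I_{2}$) or Cauchy--Schwarz (for $I_{1}$) reduces the bound to $\int_{0}^{t} \int_{0}^{1} |G^{N}(\ell^{M}(t)-\ell^{M}(s),x,y) - G^{N}(t-s,x,y)|^{2} \diff s \diff y$, which is controlled by $\Delta t/\Delta x$ via~\eqref{eq:intGNdiffEst}. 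For the value-difference pieces, the local Lipschitz continuity of $f$ and $g$, combined with boundary preservation and the estimate~\eqref{eq:intGNest}, gives a bound by $C \int_{0}^{t} (t-s)^{-1/2} \sup_{y \in [0,1]} \E[e(s,y)^{2}] \diff s$. Putting everything together yields
\begin{equation*}
\sup_{x \in [0,1]} \E[e(t,x)^{2}] \leq C\left( \Delta t^{2} + \frac{\Delta t}{\Delta x} \right) + C \int_{0}^{t} \bigl(1 + (t-s)^{-1/2}\bigr) \sup_{y \in [0,1]} \E[e(s,y)^{2}] \diff s.
\end{equation*}
A standard generalised Gronwall inequality for weakly singular kernels then closes the estimate uniformly in $t \in [0,T]$, and evaluating at $t = t_{m}, x = x_{n}$ together with Lemma~\ref{lem:LTEmild} delivers the claim.

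The main obstacle I expect is the handling of the nonstandard time argument $\ell^{M}(t) - \ell^{M}(s)$ inside $G^{N}$: the estimate~\eqref{eq:intGNdiffEst} is stated only for $G^{N}(t-\ell^{M}(s),\cdot)$ versus $G^{N}(t-s,\cdot)$. I would bridge this via the triangle inequality
\begin{equation*}
\bigl|G^{N}(\ell^{M}(t)-\ell^{M}(s),x,y) - G^{N}(t-s,x,y)\bigr| \leq \bigl|G^{N}(\ell^{M}(t)-\ell^{M}(s),x,y) - G^{N}(t-\ell^{M}(s),x,y)\bigr| + \bigl|G^{N}(t-\ell^{M}(s),x,y) - G^{N}(t-s,x,y)\bigr|,
\end{equation*}
and treat the new increment $G^{N}(\ell^{M}(t)-\ell^{M}(s),\cdot) - G^{N}(t-\ell^{M}(s),\cdot)$ using the semigroup identity for $e^{tN^{2}D^{N}}$ together with the same spectral arguments that underlie~\eqref{eq:intGNdiffEst}, obtaining an analogous $\Delta t/\Delta x$ bound. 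A secondary technical issue is also the weakly singular factor $(t-s)^{-1/2}$ in the Gronwall iteration, but this is standard for parabolic SPDEs.
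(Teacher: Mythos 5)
Your overall strategy---subtract the two mild representations, split each integral into a kernel-difference piece (controlled by~\eqref{eq:intGNdiffEst}) and a value-difference piece (controlled by local Lipschitz continuity, boundary preservation, and~\eqref{eq:intGNest}), then close with a Gr\"onwall argument with weakly singular kernel---is exactly the paper's strategy. Where you diverge is the Gr\"onwall step: you run a \emph{continuous-time} singular Gr\"onwall on $\sup_{x}\E[e(t,x)^2]$ over all $t\in[0,T]$, whereas the paper evaluates the mild equations only at $t=t_m$ and reduces to a \emph{discrete} Gr\"onwall by inserting the time-regularity estimates of Lemma~\ref{lem:uLTEtimeReg} and~\eqref{eq:NstReg} to replace $\sup_k\E[|e(s,x_k)|^2]$ inside the singular integral by its value at $\ell^M(s)$. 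Your route, if it works, trades those two time-regularity lemmas for a harder kernel estimate; the paper's route trades the harder kernel estimate for the time-regularity lemmas. Both are legitimate, but yours is costlier for the reason below.

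The one genuine gap is precisely the ``main obstacle'' you identify, and it is self-inflicted. Because you work at general $t$, the LTE kernel carries the argument $\ell^M(t)-\ell^M(s)$ and the initial-value terms do not cancel, so you need a bound on $\int_0^t\int_0^1|G^N(\ell^M(t)-\ell^M(s),x,y)-G^N(t-\ell^M(s),x,y)|^2\,\diff s\,\diff y$ that is nowhere proved in the paper and that you only assert follows from ``the same spectral arguments.'' This is not automatic: for $s$ in the same subinterval as $t$ one has $\ell^M(t)-\ell^M(s)=0$, and $\|G^N(0,x,\cdot)\|_{L^2}^2$ is of order $1/\Delta x$, so the estimate is plausible but requires an actual argument. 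The clean fix is the paper's: the proposition only concerns the grid times $t_m$, and $\ell^M(t_m)=t_m$, so setting $t=t_m$ from the outset makes $I_0$ vanish identically and puts the kernel difference in exactly the form $G^N(t_m-\ell^M(s),\cdot)-G^N(t_m-s,\cdot)$ covered by~\eqref{eq:intGNdiffEst}. If you adopt that, you must then also adopt the paper's splitting of $u^{\LTE}(s,x_k)-u^N(s,x_k)$ into its value at $\ell^M(s)$ plus two time increments (Lemma~\ref{lem:uLTEtimeReg} and~\eqref{eq:NstReg}) so that the discrete Gr\"onwall applies; alternatively, keep your continuous Gr\"onwall but supply a proof of the missing kernel estimate. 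One further small imprecision: the value-difference integrands involve only $\kappa^N(y)$, so the supremum in your Gr\"onwall inequality should be over the grid points $x_k$ rather than over all $y\in[0,1]$ (the latter, taken inside the expectation of a pointwise square, is not what the It\^o isometry delivers).
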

We change the driving noise for $u^{N}$ in Proposition~\ref{prop:mainStrong} to be able to derive strong convergence. The proof uses the standard arguments in order to apply a (discrete) Grönwall's lemma and is similar to the proof of strong convergence by the author and collaborators in \cite{MR4780408}. One important difference is that the solution $u$, the semi-discrete approximation $u^{N}$, and the $\LTE$ approximation $u^{\LTE}$ are all confined to $[a,b]$ in the current setting, implying that moment bounds are immediate.
\begin{proof}[Proof of Proposition~\ref{prop:mainStrong}]
Recall that $u^{\LTE}_{m,n} = u^{\LTE}(t_{m},x_{n})$ and that $u^{N}_{n}(t_{m}) = u^{N}(t_{m},x_{n})$. We use the integral equations~\eqref{eq:LTEmild} and~\eqref{eq:Nmild} with $t=t_{m}$. Note that, by definition, $\ell^{M}(t_{m})=t_{m}$ and so the integrals with the initial values in~\eqref{eq:LTEmild} and in~\eqref{eq:Nmild} coincide for $t=t_{m}$. We first split up the error into one term containing the deterministic integral and one term containing the stochastic integral:
\begin{equation*}
|u^{\LTE}(t_{m},x_{n}) - u^{N}(t_{m},x_{n})| \leq \left| I_{1} \right| + \left| I_{2} \right|,
\end{equation*}
where
\begin{equation*}
I_{1} = \int_{0}^{t_{m}} \int_{0}^{1} G^{N}(t_{m}-\ell^{M}(s),x_{n},y) f(u^{\LTE}(s,\kappa^{N}(y))) - G^{N}(t_{m}-s,x_{n},y) f(u^{N}(s,\kappa^{N}(y))) \diff s \diff y
\end{equation*}
and
\begin{equation*}
I_{2} = \int_{0}^{t_{m}} \int_{0}^{1} G^{N}(t_{m}-\ell^{M}(s),x_{n},y) g(u^{\LTE}(s,\kappa^{N}(y))) - G^{N}(t_{m}-s,x_{n},y) g(u^{N}(s,\kappa^{N}(y))) \diff \widetilde{W}(s,y).
\end{equation*}
As the estimates for $I_{2}$ containing the stochastic integral are the most interesting ones and as the estimates for $I_{1}$ are very similar, we leave the analogous computations for $I_{1}$ to the reader.

We first apply the triangle inequality to further split the error into one error term containing the time regularity of $G^{N}$ and one error term comparing $g(u^{N})$ to $g(u^{\LTE})$.
\begin{align*}
I_{2} &= \int_{0}^{t_{m}} \int_{0}^{1} \left( G^{N}( t_{m}-\ell^{M}(s),x_{n},y) - G^{N}(t_{m}-s,x_{n},y) \right) g(u^{\LTE}(s,\kappa^{N}(y))) \diff \widetilde{W}(s,y) \\ &+ \int_{0}^{t_{m}} \int_{0}^{1}  G^{N}(t_{m}-s,x_{n},y) \left( g(u^{\LTE}(s,\kappa^{N}(y))) - g(u^{N}(s,\kappa^{N}(y))) \right) \diff \widetilde{W}(s,y).
\end{align*}

Then, by Itô's isometry, we have
\begin{align*}
&\E \left[ \left|I_{2} \right|^{2} \right] \\ &\leq 2 \int_{0}^{t_{m}} \int_{0}^{1} \left| G^{N}(t_{m}-\ell^{M}(s),x_{n},y) - G^{N}(t_{m}-s,x_{n},y) \right|^{2} \E \left[ | g(u^{\LTE}(s,\kappa^{N}(y))) |^{2} \right] \diff s \diff y \\ &+ 2 \int_{0}^{t_{m}} \int_{0}^{1}  \left| G^{N}(t_{m}-s,x_{n},y) \right|^{2} \E \left[ | g(u^{\LTE}(s,\kappa^{N}(y))) - g(u^{N}(s,\kappa^{N}(y))) |^{2} \right] \diff s \diff y.
\end{align*}
Since $u^{\LTE}$ and $u^{N}$ only take values in $\CD = [a,b]$ (see Proposition~\ref{prop:LTE-BP} and Proposition~\ref{prop:N-BP}, respectively) and since $g$ satisfies Assumption~\ref{ass:g}, we can further estimate the above as
\begin{align*}
\E \left[ \left|I_{2} \right|^{2} \right] &\leq C(g) \int_{0}^{t_{m}} \int_{0}^{1} | G^{N}(t_{m}-\ell^{M}(s),x_{n},y) - G^{N}(t_{m}-s,x_{n},y)|^{2} \diff s \diff y \\ &+ C(g) \int_{0}^{t_{m}} \int_{0}^{1}  |G^{N}(t_{m}-s,x_{n},y)|^{2} \E \left[ |u^{\LTE}(s,\kappa^{N}(y)) - u^{N}(s,\kappa^{N}(y))|^{2} \right] \diff s \diff y.
\end{align*}
We use~\eqref{eq:intGNest} and~\eqref{eq:intGNdiffEst} to obtain the estimates
\begin{equation*}
\int_{0}^{t_{m}} \int_{0}^{1} \left| G^{N}(t_{m}-\ell^{M}(s),x_{n},y) - G^{N}(t_{m}-s,x_{n},y) \right|^{2} \diff s \diff y \leq \frac{\Delta t}{\Delta x}
\end{equation*}
and
\begin{equation*}
\int_{0}^{1} \left| G^{N}(t_{m}-s,x,y) \right|^{2} \diff y \leq C(T) \frac{1}{\sqrt{t_{m}-s}}.
\end{equation*}
Thus,
\begin{equation*}
\E \left[ \left|I_{2} \right|^{2} \right] \leq C(g) \frac{\Delta t}{\Delta x} + C(T,g) \int_{0}^{t_{m}} \frac{1}{\sqrt{t_{m}-s}} \sup_{k=0,\ldots,N} \E \left[ |u^{\LTE}(s,x_{k}) - u^{N}(s,x_{k})|^{2} \right] \diff s.
\end{equation*}

The above is not the correct form to apply a discrete Grönwall's inequality, to resolve this we split up as
\begin{equation}\label{eq:errsplit1}
  \begin{split}
    u^{\LTE}(s,x_{k}) - u^{N}(s,x_{k}) &= u^{\LTE}(\ell^{M}(s),x_{k}) - u^{N}(\ell^{M}(s),x_{k}) \\ &+ u^{\LTE}(s,x_{k}) - u^{\LTE}(\ell^{M}(s),x_{k}) \\ &+ u^{N}(\ell^{M}(s),x_{k}) - u^{N}(s,x_{k}).
  \end{split}
\end{equation}
We estimate the second term on the right hand side of~\eqref{eq:errsplit1} as
\begin{equation*}
\E \left[ \left| u^{\LTE}(s,x_{k}) - u^{\LTE}(\ell^{M}(s),x_{k}) \right|^{2} \right] \leq C(f,g) \left( \Delta t^{2} + \frac{\Delta t}{\Delta x} \right),
\end{equation*}
by Lemma~\ref{lem:uLTEtimeReg}, and we estimate the third term on the right hand side of~\eqref{eq:errsplit1} as
\begin{equation*}
\E \left[ \left| u^{N}(\ell^{M}(s),x_{k}) - u^{N}(s,x_{k}) \right|^{2} \right] \leq C(T,f,g) \frac{\Delta t}{\Delta x},
\end{equation*}
by the time regularity estimate in~\eqref{eq:NstReg}. Thus, we obtain the estimate
\begin{align*}
\E \left[ \left|I_{2} \right|^{2} \right] &\leq C(g) \frac{\Delta t}{\Delta x} + C(T,f,g) \left( \Delta t^{2} + \frac{\Delta t}{\Delta x} \right) \int_{0}^{t_{m}} \frac{1}{\sqrt{t_{m}-s}} \diff s \\ &+ C(T,g) \int_{0}^{t_{m}} \frac{1}{\sqrt{t_{m}-s}} \sup_{k=0,\ldots,N} \E \left[ |u^{\LTE}(\ell^{M}(s),x_{k}) - u^{N}(\ell^{M}(s),x_{k})|^{2} \right] \diff s \\ &\leq C(T,f,g) \left( \Delta t^{2} + \frac{\Delta t}{\Delta x} \right) \\ &+ C(T,g) \sum_{i=0}^{m-1} \sup_{k=0,\ldots,N} \E \left[ |u^{\LTE}(t_{i},x_{k}) - u^{N}(t_{i},x_{k})|^{2} \right] \int_{t_{i}}^{t_{i+1}} \frac{1}{\sqrt{t_{m}-s}} \diff s,
\end{align*}
where we also used the estimate
\begin{equation*}
\int_{0}^{t_{m}} \frac{1}{\sqrt{t_{m}-s}} \diff s \leq 2 T.
\end{equation*}
Lastly, we estimate
\begin{equation*}
\int_{t_{i}}^{t_{i+1}} \frac{1}{\sqrt{t_{m}-s}} \diff s = 2 \sqrt{t_{m}-t_{i}} - 2 \sqrt{t_{m}-t_{i+1}} = 2 \sqrt{t_{m}-t_{i}} \left( 1 - \sqrt{1-\frac{\Delta t}{t_{m}-t_{i}}} \right) \leq \frac{2 \Delta t}{\sqrt{t_{m}-t_{i}}},
\end{equation*}
where we also used that $1 - \sqrt{1-z} \leq z$ for all $z \in [0,1]$, to arrive at
\begin{equation*}
\E \left[ \left| I_{2} \right|^{2} \right] \leq C(T,f,g) \left( \Delta t^{2} + \frac{\Delta t}{\Delta x} \right)
+ C(T,g) \Delta t \sum_{i=0}^{m-1} \frac{1}{\sqrt{t_{m}-t_{i}}} \sup_{k=0,\ldots,N} \E \left[ |u^{\LTE}(t_{i},x_{k}) - u^{N}(t_{i},x_{k})|^{2} \right].
\end{equation*}
Similar arguments for $I_{1}$, yields in total
\begin{multline*}
\E \left[ |u^{\LTE}(t_{m},x_{n}) - u^{N}(t_{m},x_{n})|^{2} \right] \leq C(T,f,g) \left( \Delta t^{2} + \frac{\Delta t}{\Delta x} \right) \\ + C(T,f,g) \Delta t \sum_{i=0}^{m-1} \frac{1}{\sqrt{t_{m}-t_{i}}} \sup_{k=0,\ldots,N} \E \left[ |u^{\LTE}(t_{i},x_{k}) - u^{N}(t_{i},x_{k})|^{2} \right]
\end{multline*}
and an application of a Grönwall's inequality yields
\begin{equation*}
\sup_{n=0,\ldots,N} \E \left[ |u^{\LTE}(t_{m},x_{n}) - u^{N}(t_{m},x_{n})|^{2} \right] \leq C(T,f,g) \left( \Delta t^{2} + \frac{\Delta t}{\Delta x} \right).
\end{equation*}
Taking the supremum over $m=0,\ldots,M$ yields the desired result
\begin{equation*}
\sup_{m=0,\ldots,M} \sup_{n=0,\ldots,N} \left( \E \left[ |u^{\LTE}(t_{m},x_{n}) - u^{N}(t_{m},x_{n})|^{2} \right] \right)^{1/2} \leq C(T,f,g) \sqrt{\Delta t^{2}+\frac{\Delta t}{\Delta x}}.
\end{equation*}
\end{proof}
We are now in a position to prove the main theorem.
\begin{proof}[Proof of Theorem~\ref{th:mainWeak}]
We may, without loss of generality, assume that $\widetilde{W}$ is the driving noise for $u$. We use Jensen's inequality and Proposition~\ref{prop:mainStrong} to obtain the first statement
\begin{align*}
\left| \E \left[ F \left( u^{\LTE}(t_{m},x_{n}) \right) \right] - \E \left[ F \left( u(t_{m},x_{n}) \right) \right] \right| &\leq \left| \E \left[ \left| F \left( u^{\LTE}(t_{m},x_{n}) \right) - F \left( u(t_{m},x_{n}) \right) \right|^{2} \right] \right|^{1/2} \\&\leq C(F) \left| \E \left[ \left| u^{\LTE}(t_{m},x_{n}) - u(t_{m},x_{n}) \right|^{2} \right] \right|^{1/2} \\ &\leq C(F,T,f,g) \left( \Delta x^{1/2} + \sqrt{\Delta t^{2} + \frac{\Delta t}{\Delta x}} \right),
\end{align*}
where we also used the triangle inequality and the strong convergence estimate in~\eqref{eq:Nstrong}. For the second statement we note that if $\Delta t = \Delta x^{2}$ then $\frac{\Delta t}{\Delta x} = \sqrt{\Delta t}$.
\end{proof}

\section{Numerical experiments}\label{sec:num}
In this section, we verify the theoretical results in Section~\ref{sec:results} by numerical experiments. Recall that $N,\ M \in \mathbb{N}$ denote the spatial and temporal, respectively, discretisation parameters and that $\Delta x = 1/N$ and $\Delta t = T/M$. We use the coupling $\Delta t = \Delta x^{2}$ and we verify that the order of weak convergence is $1/4$ for global Lipschitz continuous test functions. Let us denote by $\Delta_{m} W^{N}$ the vector of size $N-1$ with elements $\Delta_{m,n} W^{N} = W^{N}_{n}(t_{m+1})-W^{N}_{n}(t_{m})$.

We consider three SPDEs with superlinearly growing multiplicative noise terms that fit within the considered framework described in Section~\ref{sec:setting}: An Allen--Cahn SPDE, a Nagumo SPDE and an SIS SPDE. SIS is an abbreviation for Susceptible-Inflected-Susceptible and is a compartment model epidemiology. We provide numerical results for the boundary preservation property and for the weak convergence errors of the $\LTE$ scheme for each of the aforementioned examples. In addition, we also provide numerical experiments illustrating that classical scheme do not preserve the boundary of the SPDEs. We compare boundary preservation of the $\LTE$ scheme defined in Section~\ref{sec:schemeconstruction} to the following classical schemes for the approximation of the system of SDEs in~\eqref{eq:uNSDE}:
\begin{itemize}
\item the Euler--Maruyama scheme (denoted EM below), see for instance \cite{MR1699161}
$$
u^{\EM}_{m+1}=u^{\EM}_m+ N^2 D^N u^{\EM}_{m} \Delta t+ f(u^{\EM}_{m}) \Delta t +\sqrt{N}g(u^{\EM}_m)\Delta_{m} W^{N},
$$
\item the semi-implicit Euler--Maruyama scheme (denoted SEM below), see for instance \cite{MR1699161}
$$
u^{\SEM}_{m+1}=u^{\SEM}_m+ N^2 D^N u^{\SEM}_{m+1} \Delta t + f(u^{\EM}_{m}) \Delta t + \sqrt{N} g(u^{\SEM}_m)\Delta_{m} W^{N},
$$
\item the exponential Euler scheme (denoted EXP below), see for instance \cite{MR3047942}
$$
u^{\SEXP}_{m+1}=e^{\Delta t N^2 D^N}\left(u^{\SEXP}_m+ f(u^{\SEXP}_{m}) \Delta t + \sqrt{N} g(u^{\SEXP}_m)\Delta_{m} W^{N} \right).
$$
\end{itemize}

We present in loglog plots the weak errors given by
\begin{equation}\label{eq:weakerr}
\sup_{m=0,\ldots,M} \sup_{n =0,\ldots,N} \left| \E \left[ F \left( u^{\LTE}_{m,n} \right) \right] - \E \left[ F \left( u^{ref}(t_{m},x_{n}) \right) \right] \right|,
\end{equation}
where the reference solution $u^{ref}$ is computed using the $\LTE$ scheme with $\Delta t^{ref} = 10^{-7}$, and we present boundary preservation and lack thereof in tables displaying the number of sample paths out of $100$ that only contained values in the domain $\CD$. We approximate the expected value in~\eqref{eq:weakerr} using $2500$ Monte Carlo samples. We have numerically verified that $2500$ Monte Carlo samples is sufficient for the Monte Carlo error to be small enough to observe the order of weak convergence when approximating the expected values in~\eqref{eq:weakerr}.

To show the order of weak convergence, we consider two choices of test functions, or their translations by a fixed constant, 
\begin{enumerate}
\item $F_{1}(r) = \exp(-|r|)$.
\item $F_{2}(r) = 0.5 \left( \sqrt{1-r^2} |r| + \arcsin (|r|)  \right)$.
\end{enumerate}
Note that $F_{1}$ and $F_{2}$ can we expressed as $\widetilde{F}(|r|)$ for some $\C^{1}$ function $\widetilde{F}$, and hence are globally Lipschitz continuous but not continuously differentiable everywhere. We use $F_{1}$ and $F_{2}$ for $[-1,1]$-valued solutions and we compose $F_{1}$ and $F_{2}$ with the translation map $r \in [0,1] \mapsto 2(r-\frac{1}{2}) \in [-1,1]$ for $[0,1]$-valued solutions to guarantee that the non-differentiability is in the domain and thus to obtain the true order of convergence.

\begin{example}[Allen--Cahn SPDE]
Consider the Allen--Cahn SPDE given by
\begin{equation}\label{eq:SAC}
\diff u(t) = \left( \Delta u(t) + u(t) - u(t)^{3} \right) \diff t + (1-u(t)^2) \diff W(t)
\end{equation}
subject to homogeneous Dirichlet boundary conditions; that is, $f(r) = r - r^3$ is cubic and $g(r) = 1 - r^2$ is quadratic. $f$ and $g$ satisfy the assumptions in Section~\ref{sec:spde}. We use $u_{0}(x)=  \sin (2 \pi x)$, for every $x \in [0,1]$, as initial function. The Allen--Cahn SPDE in~\eqref{eq:SAC} has as associated domain $\CD = [-1,1]$. This follows from Proposition~\ref{prop:u-BP} and that the inverse Lamperti transform of the associated SDE in~\eqref{eq:subprob1} is given by
\begin{equation*}
\Phi^{-1}(r) = \frac{(1+x_{0}) e^{2r}-(1-x_{0})}{(1+x_{0}) e^{2r}+(1-x_{0})},\ r \in \mathbb{R},
\end{equation*}
and only takes values in $(-1,1)$. See Appendix~\ref{app:SAC} for more details.

The deterministic Allen--Cahn equation \cite{ALLEN19791085} (with $f(r) = r - r^3$ and $g(r) = 0$) was first derived as a model for the time evolution of the interface between two phases of a material (a so-called evolving interface model). The states $\pm 1$ represents the pure states in the deterministic Allen--Cahn equation and its solutions can only take values in the domain $[-1,1]$. The stochastic Allen--Cahn equation with additive noise has been considered, for example, in the works \cite{MR3986273, MR3308418}. We consider noise of the form $g(u) \diff W = \left( 1-u^2 \right) \diff W$ to preserve the physical domain $\CD = [-1,1]$ of the deterministic Allen--Cahn equation. The Allen--Cahn type SDE (see~\eqref{eq:ACsde}) with drift coefficient $x - x^3$ and diffusion coefficient $1 - r^2$ has been considered in previous works by the author \cite{artBar,MR4737060}. To the best of our knowledge, as a result of the difficulty to treat the quadratic noise term, the stochastic Allen--Cahn equation~\eqref{eq:SAC} with noise of the form $\left( 1-u^2 \right) \diff W$ has not been considered before.

It is clear that the drift and diffusion coefficient functions $f$ and $g$ satisfy Assumptions~\ref{ass:f}, \ref{ass:g}, and~\ref{ass:fg}. We provide the implementation details of the $\LTE$ scheme applied to the Allen--Cahn equation~\eqref{eq:SAC} in Appendix~\ref{app:SAC}, as the purpose in this section is to provide the numerical results to support our theoretical results.

We first provide numerical experiments to illustrate that the $\LTE$ scheme is boundary-preserving and that the classical schemes $\EM$, $\SEM$ and $\SEXP$ are not. Table $1$ shows the proportion of $100$ sample paths for each of the considered scheme that only take values in the domain $\CD = [-1,1]$. 

\begin{table}[h!]
\begin{center}
\begin{tabular}{||c c c c c||} 
 \hline
 $\lambda$ & $\EM$ & $\SEM$ & $\SEXP$ & $\LTE$ \\ [0.5ex] 
 \hline\hline
 $1$ & $0/100$ & $100/100$ & $100/100$ & $100/100$ \\ 
 \hline
 $2$ & $0/100$ & $75/100$ & $94/100$ & $100/100$ \\ 
 \hline
 $3$ & $0/100$ & $18/100$ & $56/100$ & $100/100$ \\ [1ex]
 \hline
\end{tabular}
\caption{Proportion of samples containing only values in $\CD=[-1,1]$ out of $100$ simulated sample paths for the Euler--Maruyama scheme (EM), the semi-implicit Euler--Maruyama scheme (SEM), the exponential Euler scheme (EXP), and Lie--Trotter-Exact scheme ($\LTE$) for the stochastic Allen--Cahn equation in~\eqref{eq:SAC} for different choices of $\lambda>0$ and parameters $T=1$, $u_{0}(x) = \sin(2 \pi x)$, $\Delta x = 2^{-4}$ and $\Delta t = 2^{-2}$. \label{tb:SAC}}
\end{center}
\end{table}

Next, we present in Figure~\ref{num:SAC_conv_exp} the weak errors in~\eqref{eq:weakerr} for the $\LTE$ scheme. The obtained numerical orders in Figure~\ref{num:SAC_conv_exp} confirm the theoretical order derived in Theorem~\ref{th:mainWeak} with the coupling $\Delta t = \Delta x^{2}$.

\begin{figure}[!htbp]
\begin{center}
\advance\leftskip-3cm
\advance\rightskip-3cm
\includegraphics[width=0.8\textwidth]{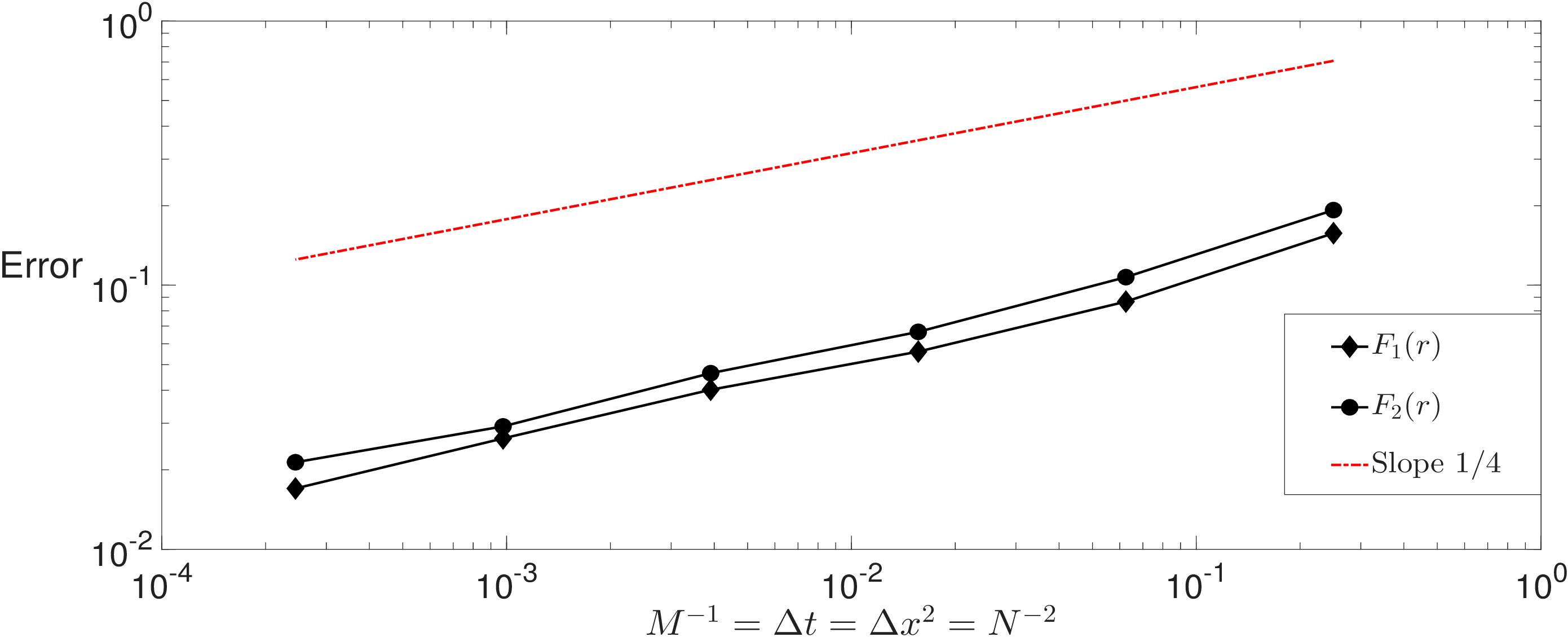}
\caption{Weak errors on the time interval $[0,T] = [0,1]$ of the $\LTE$ scheme for the Allen--Cahn SPDE for test functions $F_{1}(r) = e^{-|r|}$ and $F_{2}(r) = 0.5 \left( \sqrt{1-r^2} |r| + \arcsin(|r|) \right)$ and with $\lambda=1$, $u_{0}(x) = \sin (2 \pi x)$ and $\Delta t = \Delta x^{2}$. Averaged over $2500$ samples.}
\label{num:SAC_conv_exp}
\end{center}\end{figure}

\end{example}

\begin{example}[Nagumo SPDE]
Consider the Nagumo SPDE given by
\begin{equation}\label{eq:Nagumo}
\diff u(t) = \left( \Delta u(t) + u(t)(1-u(t))(u(t)-\gamma) \right) \diff t + u(t) (1-u(t)) \diff W(t),
\end{equation}
for some $\gamma \in (0,1/2)$, subject to non-homogeneous Dirichlet boundary conditions equal to $1/2$; that is, $f(r) = r (1-r) (r-\gamma)$ is cubic and $g(r) = r (1-r)$ is quadratic. It is clear that $f$ and $g$ satisfy the assumptions in Section~\ref{sec:spde}. We use $u_{0}(x) = 0.5 \left( \sin (\pi x) + 1/2 \right)$. The Nagumo SPDE in~\eqref{eq:Nagumo} has as associated domain $\CD = [0,1]$. Similarly to the stochastic Allen--Cahn equation, this follows from Proposition~\ref{prop:u-BP}, the change of variables presented in Appendix~\ref{app:Nagumo} and that the inverse Lamperti transform of the Nagumo type SDE in~\eqref{eq:NagumoSDETC} given by
\begin{equation*}
\Phi^{-1}(r) = \frac{(1+x_{0}) e^{2r}-(1-x_{0})}{(1+x_{0}) e^{2r}+(1-x_{0})},\ r \in \mathbb{R},
\end{equation*}
only takes values in $(-1,1)$.

The motivation for the (deterministic) Nagumo equation \cite{MCKEAN1970209} comes from the modelling of the voltage in the axon in a neuron and can only take values in $\CD = [0,1]$ \cite{MCKEAN1970209}. The stochastic Nagumo equation with additive noise has been considered in, for example, \cite{MR3308418}. We consider the noise of the from $g(u) \diff W = u (1-u) \diff W$ to preserve the invariant domain $[0,1]$ of the deterministic Nagumo equation. To the best of our knowledge, as a result of the difficulty of treat the quadratically growing noise term, the stochastic Nagumo equation with noise of the form $u (1-u) \diff W$ has not been considered before.

We present in Table~\ref{tb:Nagumo} the proportion of $100$ samples of the considered schemes that only take values in the domain $[0,1]$. Table~\ref{tb:Nagumo} verifies that the $\LTE$ is boundary-preserving and that the classical schemes $\EM$, $\SEM$ and $\SEXP$ are not.

\begin{table}[h!]
\begin{center}
\begin{tabular}{||c c c c c||} 
 \hline
 $\lambda$ & $\EM$ & $\SEM$ & $\SEXP$ & $\LTE$ \\ [0.5ex] 
 \hline\hline
 $2$ & $0/100$ & $100/100$ & $100/100$ & $100/100$ \\ 
 \hline
 $3$ & $0/100$ & $91/100$ & $98/100$ & $100/100$ \\ 
 \hline
 $4$ & $0/100$ & $51/100$ & $76/100$ & $100/100$ \\ [1ex]
 \hline
\end{tabular}
\caption{Proportion of samples containing only values in $\CD=[0,1]$ out of $100$ simulated sample paths for the Euler--Maruyama scheme ($\EM$), the semi-implicit Euler--Maruyama scheme ($\SEM$), the tamed Euler scheme (TE), and Lie--Trotter-Exact scheme ($\LTE$) for the stochastic Nagumo equation in~\eqref{eq:Nagumo} for different choices of $\lambda>0$ and parameters $T=1$, $x_{0}= 0.5 (\sin (\pi x + 1/2))$, $\Delta x = 2^{-4}$ and $\Delta t = 2^{-2}$. \label{tb:Nagumo}}
\end{center}
\end{table}

To numerically verify Theorem~\ref{th:mainWeak}, the main convergence result, we present the weak errors in~\eqref{eq:weakerr} of the $\LTE$ scheme in Figure~\ref{num:Nagumo_conv_exp}. The orders in Figure~\ref{num:Nagumo_conv_exp} confirm the derived theoretical order in Theorem~\ref{th:mainWeak}.

\begin{figure}[!htbp]
\begin{center}
\advance\leftskip-3cm
\advance\rightskip-3cm
\includegraphics[width=0.8\textwidth]{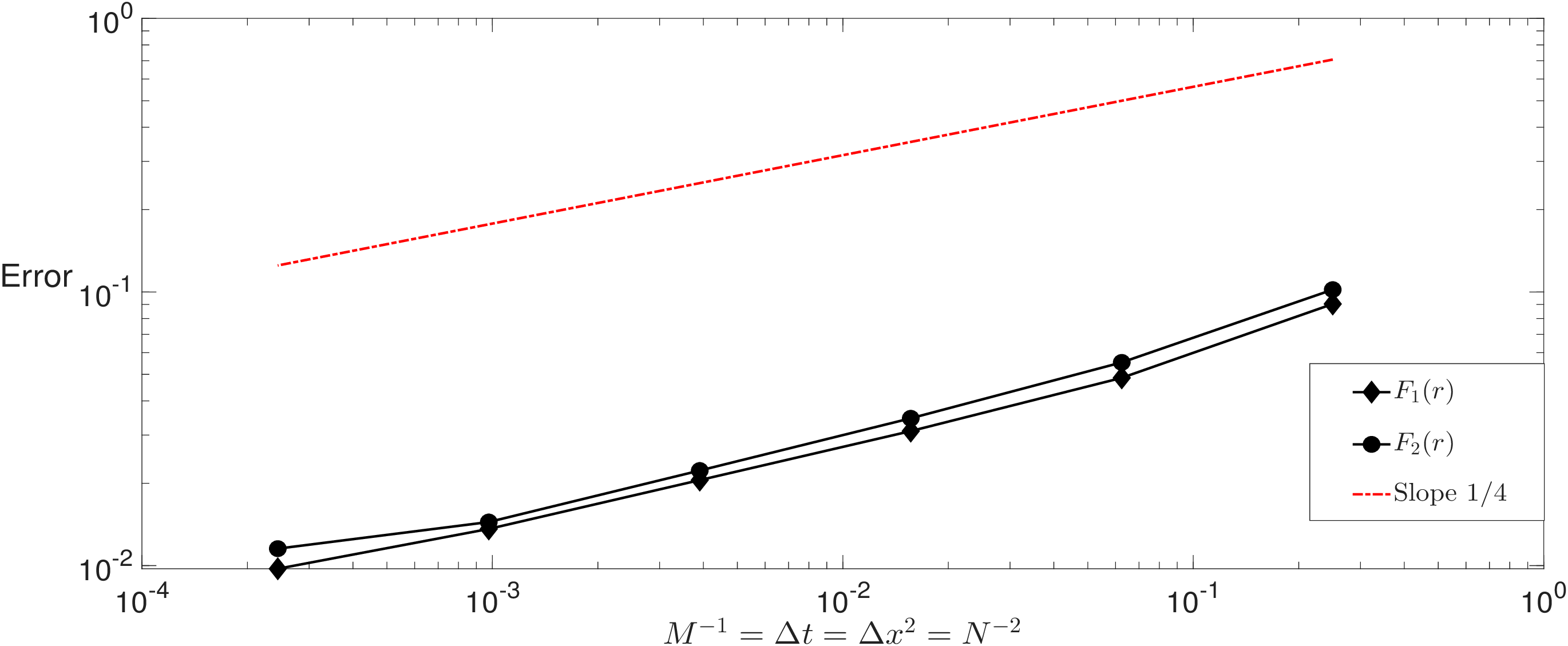}
\caption{Weak errors on the time interval $[0,T] = [0,1]$ of the $\LTE$ scheme for the Nagumo SPDE using the test functions $F_{1}(r) = e^{-|r|}$ and $F_{2}(r) = 0.5 \left( \sqrt{1-r^2} |r| + \arcsin(|r|) \right)$ and with $\lambda=1$, $u_{0}(x) = 0.5( \sin (\pi x) + 1/2)$ and $\Delta t = \Delta x^{2}$. Averaged over $2500$ samples.}
\label{num:Nagumo_conv_exp}
\end{center}\end{figure}
\end{example}

\begin{example}[SIS SPDE]
Consider the SIS SPDE given by
\begin{equation}\label{eq:SIS}
\diff u(t) = \left( \Delta u(t) + u(t) \left(1- u(t) \right) \right) \diff t + u(t)(1-u(t)) \diff W(t)
\end{equation}
subject to non-homogeneous Dirichlet boundary conditions equal to $1/2$; that is, $f(r) = r (1-r)$ and $g(r) = r (1-r)$ are quadratic. $f$ and $g$ satisfy the assumptions in Section~\ref{sec:setting}. We use $u_{0}(x) = 0.5 \left( \sin (\pi x) + 1/2 \right)$. The SIS SPDE~\eqref{eq:SIS} has as associated domain $\CD = [0,1]$. This follows from Proposition~\ref{prop:u-BP}, the change of variables used in Appendix~\ref{app:SIS} and that the inverse Lamperti transform of the SIS type SDE in~\eqref{eq:SIS} is given by
\begin{equation*}
\Phi^{-1}(r) = \frac{(1+x_{0}) e^{2r}-(1-x_{0})}{(1+x_{0}) e^{2r}+(1-x_{0})},\ r \in \mathbb{R},
\end{equation*}
and only takes values in $(-1,1)$.

The motivation for the SIS stochastic differential equation comes from the modelling of the spread of some disease in a population \cite{KERMACK199133} and can only take values in $\CD = [0,1]$ (as it represents the proportion of the population with the disease). For example, the SDE with drift and diffusion coefficient functions both equal to $r (1-r)$ is an instance of an SIS SDE. Such SDEs has been considered in the works \cite{GraySIS, DomPres, MR4737060}. Similarly, we consider noise of the from $u (1-u) \diff W$ to preserve the physical domain $\CD = [0,1]$. To the best of our knowledge, the SIS SPDE~\eqref{eq:SIS} with noise of the form $(1-u^{2}) \diff W$ has not been considered before.

Firstly, we present numerical results in Table~\ref{tb:SIS} to confirm that the $\LTE$ scheme is boundary-preserving and that the considered classical schemes are not. We present in Table~\ref{tb:SIS} the proportion out of $100$ samples, for each scheme, that only takes values in the domain $\CD = [0,1]$.

\begin{table}[h!]
\begin{center}
\begin{tabular}{||c c c c c||} 
 \hline
 $\lambda$ & $\EM$ & $\SEM$ & $\SEXP$ & $\LTE$ \\ [0.5ex] 
 \hline\hline
 $2$ & $0/100$ & $100/100$ & $100/100$ & $100/100$ \\ 
 \hline
 $3$ & $0/100$ & $89/100$ & $98/100$ & $100/100$ \\ 
 \hline
 $4$ & $0/100$ & $78/100$ & $43/100$ & $100/100$ \\ [1ex]
 \hline
\end{tabular}
\caption{Proportion of samples containing only values in $\CD=[0,1]$ out of $100$ simulated sample paths for the Euler--Maruyama scheme ($\EM$), the semi-implicit Euler--Maruyama scheme ($\SEM$), the exponential Euler scheme (EXP), and Lie--Trotter-Exact scheme ($\LTE$) for the SIS SPDE in~\eqref{eq:SIS} for different choices of $\lambda>0$ and parameters $T=1$, $u_{0}(x)= 0.5 (\sin(\pi x) + 1/2)$, $\Delta x = 2^{-4}$ and $\Delta t = 2^{-2}$. \label{tb:SIS}}
\end{center}
\end{table}

Secondly, we present the weak errors in~\eqref{eq:weakerr} of the $\LTE$ scheme in Figure~\ref{num:SIS_conv_exp}. We use the two considered test functions $F_{1}$ and $F_{2}$ to numerically verify the theoretical convergence order obtained in Theorem~\ref{th:mainWeak}.

\begin{figure}[!htbp]
\begin{center}
\advance\leftskip-3cm
\advance\rightskip-3cm
\includegraphics[width=0.8\textwidth]{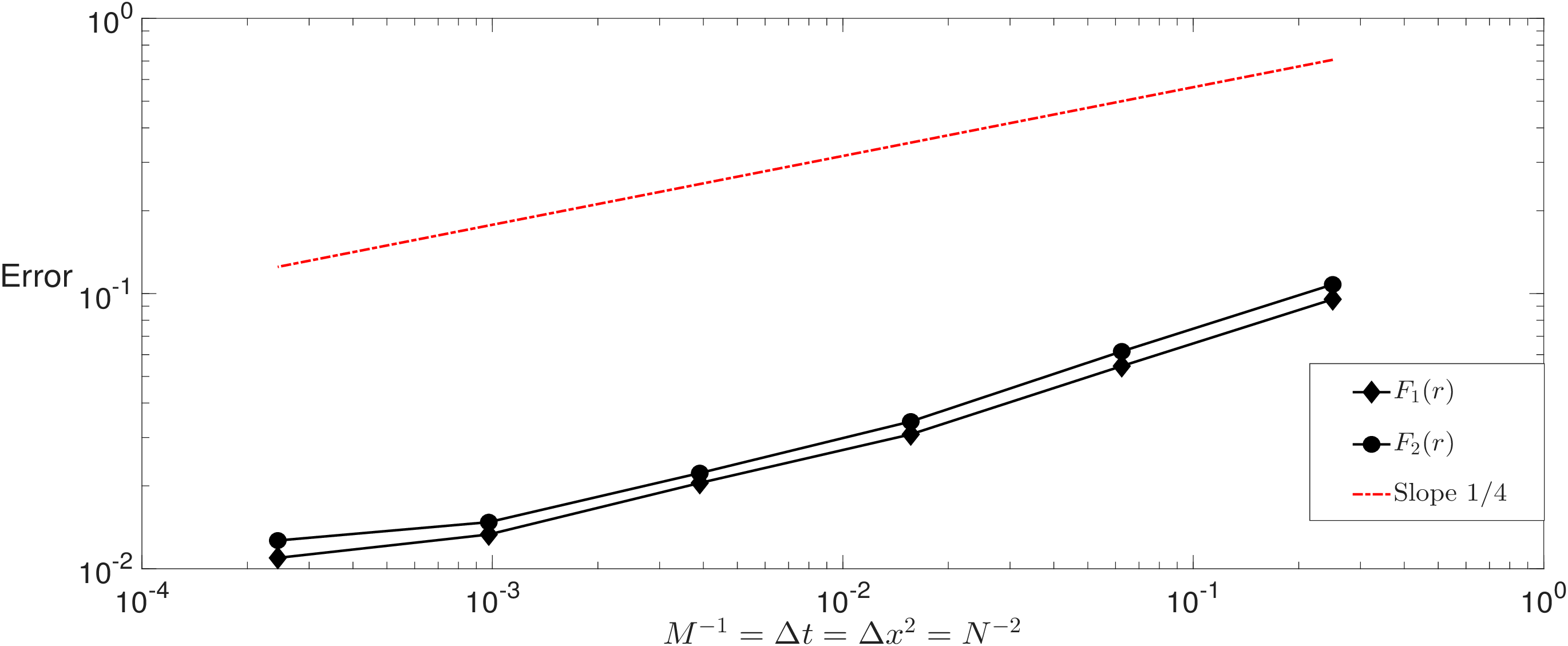}
\caption{Weak errors on the time interval $[0,T] = [0,1]$ of the $\LTE$ scheme for the SIS SPDE for the test functions $F_{1}(r) = e^{-|r|}$ and $F_{2}(r) = 0.5 \left( \sqrt{1-r^2} |x| + \arcsin(|r|) \right)$ and with $\lambda=1$, $u_{0}(x) = 0.5( \sin (\pi x) + 1/2)$ and $\Delta t = \Delta x^{2}$. Averaged over $2500$ samples.}
\label{num:SIS_conv_exp}
\end{center}\end{figure}
\end{example}

\begin{appendix}
\section{Additional proofs}\label{sec:app-proofs}
\subsection{About the spatial discretisation $u^{N}$}\label{sec:app-uN}
Here we provide the proof of Proposition~\ref{prop:N-BP}.
\begin{proof}[Proof of Proposition~\ref{prop:N-BP}]
It suffices to show 
\begin{equation*}
\PP \left( u^{N}_{n}(t) \in \CD,\ \forall t \in [0,T],\ \forall n=0,\ldots,N \right) = 1,
\end{equation*}
since $x \mapsto u^{N}(t,x)$ is piecewise linear and satisfies $u^{N}(t,x_{n}) = u^{N}_{n}(t)$ for $n = 0,\ldots,N$.

The argument is based on extending and truncating the coefficient functions $f$ and $g$ as follows. Since $f(a)=f(b)=g(a)=g(b)=0$, we can extend $f$ and $g$ such that, for some $\epsilon>0$, $f \in \C^{2}(\CD_{\epsilon})$ and $g \in \C^{3}(\CD_{\epsilon})$ where $\CD_{\epsilon} = [l-\epsilon,r+\epsilon]$. Let $L \in \mathbb{N}$ sufficiently large and define $f_{L}$ and $g_{L}$ as follows
\begin{enumerate}[(i)]
\item $f_{L}(r) = f(r)$ and $g_{L}(r) = g(r)$ for every $r \in \CD_{1/L}$.
\item $f_{L}(r)=0$ and $g_{L}(r)=0$ on $\mathbb{R} \setminus \CD_{2/L}$.
\item $f_{L}$ and $g_{L}$ are linear on $\CD_{2/L} \setminus \CD_{1/L}$.
\end{enumerate}
Note that we take $L \in \mathbb{N}$ large enough so that $\CD_{1/L} \subset \CD_{\epsilon}$. Then, $f_{L}$ and $g_{L}$ are, for every $L \in \mathbb{N}$, globally Lipschitz continuous functions that coincide with $f$ and $g$, respectively, on $\CD_{1/L}$ and are identically zero outside $\CD_{2/L}$. We now consider the solution $u_{L}^{N}$ of the following system of SDEs
\begin{equation}\label{eq:app-truncSDE}
\diff u^{N}_{L}(t) = \left( N^{2} D^{N} u^{N}_{L}(t) + f_{L}(u^{N}_{L}(t)) \right) \diff t + \sqrt{N} g_{L}(u^{N}_{L}(t)) \diff W^{N}(t)
\end{equation}
with initial vector $u^{N}_{L}(0) = u^{N}(0)$ and with globally Lipschitz continuous coefficient functions. We next show that $\PP \left( u^{N}_{L} \in \CD \right) = 1$ for every $L \in \mathbb{N}$. Let us denote by $\hat{u}^{\LTE}_{L}$ the $\LTE$ scheme defined in Section~\ref{sec:schemeconstruction} applied to the truncated system of SDEs in~\eqref{eq:app-truncSDE} with exact samples instead of exact simulations to solve the diagonal system of SDEs in~\eqref{eq:subprob1}. Then $\PP \left( \hat{u}^{\LTE}_{L} \in \CD \right) = 1$, by construction, and a modification of the proof of Proposition~\ref{prop:mainStrong} implies that $\hat{u}^{\LTE}_{L} \to u^{N}_{L}$ strongly as $\Delta t \to 0$. Note that, contrary to argument presented in the proof of Proposition~\ref{prop:mainStrong}, there is no need to assume that $\PP(u^{N}_{L} \in \CD)=1$, as the coefficient functions are now globally Lipschitz continuous for the truncated system of SDEs in~\eqref{eq:app-truncSDE}. Since strong convergence implies almost sure convergence along a subsequence, we conclude that $\PP(u^{N}_{L} \in \CD)=1$. Then, as modifying $f_{L}$ and $g_{L}$ outside $\CD$ does not affect the solution $u_{L}^{N}$, it follows that $u_{L}^{N}$ solves the original system of SDE in~\eqref{eq:FDeq} and hence $u^{N} = u^{N}_{L}$. We conclude that $\PP(u^{N} \in \CD)=1$. Note that any specific choice of $L$ would be sufficient to conclude that $\PP(u^{N} \in \CD)=1$.
\end{proof}

\subsection{About the exact solution $u$}\label{sec:app-EUB}
Here we provide the proof of Proposition~\ref{prop:u-BP}.
\begin{proof}[Proof of Proposition~\ref{prop:u-BP}]
Standard theory for SPDEs assume globally Lipschitz continuous coefficient functions $f$ and $g$. We make no claim that this is the first proof of well-posedness of continuous mild solutions to the SPDE in~\eqref{intro:SPDE}, we provide the following argument as it fits well into the framework developed in this paper and that, to the best of our knowledge, the unique continuous mild solution only takes values in $\CD$ is not known in the literature. We prove the following statements:
\begin{enumerate}
\item There exists a continuous mild solution $u$ to the SPDE in~\eqref{intro:SPDE}.
\item Any continuous mild solution $u$ to the SPDE in~\eqref{intro:SPDE} must satisfy $\PP \left( u \in \CD \right)=1$.
\item There exists a unique continuous mild solution $u$ to the SPDE in~\eqref{intro:SPDE}.
\end{enumerate}
As in the proof in Appendix~\ref{sec:app-uN}, we extend and truncate the coefficient functions $f$ and $g$ as follows. We recall the extension and truncation here for completeness. We extend $f$ and $g$ such that, for some $\epsilon>0$, $f \in \C^{2}(\CD_{\epsilon})$ and $g \in \C^{3}(\CD_{\epsilon})$ where $\CD_{\epsilon} = [l-\epsilon,r+\epsilon]$. Let $L \in \mathbb{N}$ be sufficiently large and define $f_{L}$ and $g_{L}$ as follows
\begin{enumerate}[(i)]
\item $f_{L}(r) = f(r)$ and $g_{L}(r) = g(r)$ for every $r \in \CD_{1/L}$.
\item $f_{L}(r)=0$ and $g_{L}(r)=0$ on $\mathbb{R} \setminus \CD_{2/L}$.
\item $f_{L}$ and $g_{L}$ are linear on $\CD_{2/L} \setminus \CD_{1/L}$.
\end{enumerate}
Then, $f_{L}$ and $g_{L}$ are, for every $L \in \mathbb{N}$, globally Lipschitz continuous functions that coincide with $f$ and $g$, respectively, on $\CD_{1/L}$ and are identically zero outside $\CD_{2/L}$. We consider the following truncated SPDE
\begin{equation}\label{eq:app-truncSPDE}
\diff u_{L}(t) = \left(\Delta u(t) + f_{L}(u_{L}(t)) \right) \diff t + g_{L}(u_{L}(t)) \diff W(t)
\end{equation}
with initial state $u_{L}(0) = u_{0}$ and with globally Lipschitz continuous coefficient functions $f_{L}$ and $g_{L}$. Existence and uniqueness of a continuous mild solution $u_{L}$ to the truncated SPDE~\eqref{eq:app-truncSPDE} follows from standard SPDE theory, since $f_{L}$ and $g_{L}$ are globally Lipschitz continuous.

We start with $(1)$. We first show that $\PP(u_{L} \in \CD)=1$, for every $L \in \mathbb{N}$. It then follows that $u_{L}$ solves the original SPDE in~\eqref{intro:SPDE}, as modifying $f_{L}$ and $g_{L}$ outside $\CD$ does not affect the solution $u_{L}$, and establishes existence of a continuous mild solution to the original SPDE in~\eqref{intro:SPDE}. To this end, let us denote by $u^{N}_{L}$ the solution of the following system of SDEs
\begin{equation*}
\diff u^{N}_{L}(t) = \left( N^{2} D^{N} u^{N}_{L}(t) + f_{L}(u^{N}_{L}(t)) \right) \diff t + \sqrt{N} g_{L}(u^{N}_{L}(t)) \diff W^{N}(t).
\end{equation*}
Then, as was shown in the proof of Propisition~\ref{prop:N-BP}, $\PP \left( u^{N}_{L} \in \CD \right) = 1$ and Theorem $3.1$ in~\cite{MR1644183} implies that $u^{N}_{L} \to u_{L}$ strongly as $\Delta t \to 0$. Since strong convergence implies almost sure convergence along a subsequence, we conclude that $\PP(u_{L} \in \CD)=1$. This, in turn, implies $(1)$.

We next prove $(2)$ that any continuous mild solution $u$ to~\eqref{sec:intro} must satisfy $\PP(u \in \CD)=1$. To this end, suppose that $u$ is a continuous mild solution to~\eqref{sec:intro}.  We define
\begin{equation*}
\tau_{k} = \inf \{ t>0:\ \exists x \in [0,1] \text{ such that } u(t,x) \not\in [l-1/k,l+1/k] \},
\end{equation*}
for $k \in \mathbb{N}$, and we let
\begin{equation*}
\tau = \inf \{ t>0:\ \exists x \in [0,1] \text{ such that } u(t,x) \not\in [a,b] \} = \inf_{k \in \mathbb{N}} \tau_{k}.
\end{equation*}
We now claim that $\PP(\tau_{k}=\infty) = 1$ for every $k \in \mathbb{N}$. Observe that this would imply that $\PP( \tau=\infty) = 1$, and hence that $\PP(u \in \CD) = 1$. We prove the claim by contradiction; that is, suppose that $\PP(\tau_{k} < \infty) = \epsilon > 0$ for some $k \in \mathbb{N}$. Let $u_{k}$ be the solution to the truncated SPDE in~\eqref{eq:app-truncSPDE} with $L = k$ and let $\omega \in \Omega$ be a sample point such $\tau_{k}(\omega) < \infty$. We add $\omega$ to indicate the chosen sample point $\omega \in \Omega$. Recall that $u_{k} \in \CD$ with probability $1$ (see proof of Proposition~\ref{prop:N-BP}) and that $(t,x) \mapsto u(t,x)$ is (jointly) continuous with probability $1$ by assumption. Then, by definition of $\tau_{k}$, we have that
\begin{equation*}
u(\omega;t,x) \in [l - 1/k,r + 1/k],
\end{equation*}
for every $t < \tau_{k}(\omega)$ and for every $x \in [0,1]$. Then, by uniqueness for SPDEs with globally Lipschitz continuous coefficients, we have that
\begin{equation*}
u(\omega;t,x) = u_{k}(\omega;t,x),
\end{equation*}
for $t < \tau_{k}(\omega)$ and for $x \in [0,1]$. But then, since $\PP(u_{k} \in \CD)=1$, we obtain that
\begin{equation*}
u(\omega;t,x) \in [a,b]
\end{equation*}
for $t < \tau_{k}(\omega)$ and for $x \in [0,1]$. By continuity of the sample paths of $u$, we deduce that
\begin{equation*}
u(\omega;t,x) \in [a,b]
\end{equation*}
for $t \leq \tau_{k}(\omega)$ and for $x \in [0,1]$. This is a contradiction to the definition of $\tau_{k}$. We conclude that $\PP(\tau_{k} < \infty) = 0$ for every $k \in \mathbb{N}$, or equivalently that $\PP(\tau_{k} = \infty) = 1$ for every $k \in \mathbb{N}$. Thus, we conclude the desired property in $(2)$ that $\PP(u \in \CD)=1$.

Lastly, let $u$ and $v$ be continuous mild solutions of the SPDE in~\eqref{intro:SPDE}. Then, by $(2)$, $\PP(u \in \CD)=1$ and $\PP(v \in \CD)=1$. Therefore, $u$ and $v$ both solves the SPDE with globally Lipschitz continuous coefficients in~\eqref{eq:app-truncSPDE} and we can conclude that $u=v$ by uniqueness of solutions of~\eqref{eq:app-truncSPDE}. This gives $(3)$ and concludes the proof.
\end{proof}

\subsection{Time-change for SDEs}\label{sec:app-TC}
Here we provide some extra details for the deterministic time-change of SDEs considered in Section~\ref{sec:exactSim}. This is done as follows: We apply a time-change based on $\Theta (t) = \lambda^{-2} t$ by inserting $\Theta$ into the SDE in equation~\eqref{eq:SDE-exactSim} that we recall here
\begin{equation}\label{eq:SDE-TC-app}
\left\lbrace
\begin{aligned}
& \diff X(t) = f(X(t)) \diff t + \lambda g(X(t)) \diff B(t),\ t \in (0,T] \\ 
& X(0) = x_{0} \in \D,
\end{aligned}
\right.
\end{equation}
for some $\lambda > 0$, where $f$ and $g$ satisfy the assumptions in Section~\ref{sec:spde}. We insert $\Theta(t) = \lambda^{-2} t$ into equation~\eqref{eq:SDE-TC-app} to see that
\begin{equation*}
X(\Theta(t)) = x_{0} + \int_{0}^{\Theta(t)} f(X(s)) \diff s + \int_{0}^{\Theta(t)} \lambda g(X(s)) \diff B(s).
\end{equation*}
We apply a standard change of variables to re-write the deterministic integral (with stochastic integrand) as
\begin{equation*}
\int_{0}^{\Theta(t)} f(X(s)) \diff s = \frac{1}{\lambda^{2}} \int_{0}^{t} f(X(\Theta(s))) \diff s.
\end{equation*}
For the stochastic integral, we use the property that
\begin{equation*}
\int_{0}^{\Theta(t)} \lambda g(X(s)) \diff B(s) = \int_{0}^{t} g(X(\Theta(s))) \diff B(s)
\end{equation*}
holds in the weak sense (in law). See \cite{MR2001996} for more details on this. Thus, in total we have that
\begin{equation*}
\widetilde{X}(t) = X(\Theta(t)) = x_{0} + \frac{1}{\lambda^{2}} \int_{0}^{t} f(X(\Theta(s))) \diff s + \int_{0}^{t} g(X(\Theta(s))) \diff B(s)
\end{equation*}
in the weak sense, which is equivalent to $\widetilde{X}$ satisfying the desired SDE in equation~\eqref{eq:SDE-exactSim-TC} that we recall here
\begin{equation*}
\left\lbrace
\begin{aligned}
& \diff \widetilde{X}(t) = \lambda^{-2} f(\widetilde{X}(t)) \diff t + g(\widetilde{X}(t)) \diff B(t),\ t \in (0,\lambda^{2} T], \\ 
& \widetilde{X}(0) = x_{0} \in \D.
\end{aligned}
\right.
\end{equation*}

\section{Implementation details of the $\LTE$ scheme}\label{app:implementation}
In this section, we provide the implementation details of the $\LTE$ scheme constructed in Section~\ref{sec:schemeconstruction} when applied to the stochastic Allen--Cahn equation in~\eqref{eq:SAC}, to the stochastic Nagumo equation in~\eqref{eq:Nagumo} and to the SIS SPDE in~\eqref{eq:SIS}. The $\LTE$ can be directly applied to the stochastic Allen--Cahn equation, as we have homogeneous Dirichlet boundary conditions in this case. On the other hand, the considered stochastic Nagumo equation and SIS SPDE have non-homogeneous Dirichlet boundary conditions and we first apply a change of variables to obtain equivalent SPDEs with homogeneous Dirichlet boundary conditions.

\subsection{Stochastic Allen--Cahn equation}\label{app:SAC}
The implementation details of the $\LTE$ scheme applied to the stochastic Allen--Cahn equation~\eqref{eq:SAC} that is not included in the main part of the work is the remaining details for the exact simulation of the Allen--Cahn type SDEs
\begin{equation}\label{eq:ACsde}
\left\lbrace
\begin{aligned}
& \diff X(t) = \left( X(t) - X(t)^3 \right) + \lambda \left( 1 - X(t)^2 \right) \diff B(t),\ t \in (0,T], \\ 
& X(0) = x_{0} \in (-1,1),
\end{aligned}
\right.
\end{equation}
with solution process $X$. Note that, for the same reason as in Section~\ref{sec:exactSim}, we scale the noise with a parameter $\lambda>0$. For the implementation of the $\LTE$ scheme we use $\lambda = \sqrt{N}$ where $N$ is the finite difference discretisation parameter. The first step is to apply a deterministic time-change (see Section~\ref{sec:app-TC}) to transfer the constant $\lambda$ into the drift. The process $\widetilde{X}(t) = X(\lambda^{-2} t)$ satisfies in the weak sense the Itô SDE
\begin{equation}\label{eq:ACsdeTC}
\left\lbrace
\begin{aligned}
& \diff \widetilde{X}(t) = \lambda^{-2} \left( \widetilde{X}(t) - \widetilde{X}(t)^3 \right) \diff t + \left( 1 - \widetilde{X}(t)^2 \right) \diff B(t),\ t \in (0,T_{\lambda}], \\ 
& \widetilde{X}(0) = x_{0} \in (-1,1),
\end{aligned}
\right.
\end{equation}
where we recall that $T_{\lambda} = \lambda^{2} T$. Sampling $X(\Delta t)$ starting from $X(0) = x_{0}$ is equivalent to sampling $\widetilde{X}(\lambda^{2} \Delta t)$ starting from $\widetilde{X}(0) = x_{0}$. We compute the Lamperti transform (see equation~\eqref{eq:LampTrans}) of the time-changed SDE in~\eqref{eq:ACsdeTC} 
\begin{equation*}
\Phi(r) = \int_{x_{0}}^{r} \frac{1}{1-w^2} \diff w = \frac{1}{2} \left( \log(1+r) - \log(1-r) - \log(1+x_{0}) + \log(1-x_{0}) \right),\ r \in \D,
\end{equation*}
and the inverse of the Lamperti transform
\begin{equation*}
\Phi^{-1}(r) = \frac{(1+x_{0})e^{2r} - (1-x_{0})}{(1+x_{0})e^{2r} + (1-x_{0})},\ r \in \mathbb{R}.
\end{equation*}
The transformed process $Y(t) = \Phi (\widetilde{X}(t))$ then satisfies
\begin{equation*}
\left\lbrace
\begin{aligned}
& \diff Y(t) = \alpha(Y(t)) \diff t + \diff B(t),\ t \in (0,T_{\lambda}], \\ 
& Y(0) = \Phi(x_{0}) \in \mathbb{R},
\end{aligned}
\right.
\end{equation*}
with drift coefficient function given by
\begin{equation*}
\alpha(r) = \left( 1 + \frac{1}{\lambda^{2}} \right) \Phi^{-1}(r),\ r \in \mathbb{R}.
\end{equation*}
We compute $\alpha'$ by direct differentiation and using~\eqref{eq:diffPhiinv}
\begin{align*}
\alpha'(r) &= \left( 1 + \frac{1}{\lambda^{2}} \right) \frac{\diff}{\diff r} \Phi^{-1}(r) = \left( 1 + \frac{1}{\lambda^{2}} \right) g \left( \Phi^{-1}(r) \right) \\ &= \left( 1 + \frac{1}{\lambda^{2}} \right) \frac{4 (1-x_{0})(1+x_{0}) e^{2r}}{\left( (1+x_{0})e^{2r} + (1-x_{0}) \right)^{2}},\ r \in \mathbb{R},
\end{align*}
and we recall that $\phi$ is defined to be
\begin{equation*}
\phi(r) = \frac{1}{2} \alpha'(r) + \frac{1}{2} \alpha(r)^{2} - k_{1},\ r \in \mathbb{R}.
\end{equation*}
In this case, $k_{1} = 0$ and
\begin{equation*}
k_{2} = \frac{1}{2} \left( 1 + \frac{1}{\lambda^{2}} \right) + \frac{1}{2} \left( 1 + \frac{1}{\lambda^{2}} \right)^{2},
\end{equation*}
since the above allows us to bound
\begin{equation}\label{eq:ACsdealphaalphapest}
0 \leq \alpha'(r) \leq 1 + \frac{1}{\lambda^{2}},\ 0 \leq \alpha^{2}(r) \leq \left( 1 + \frac{1}{\lambda^{2}} \right)^{2},
\end{equation}
both for every $r \in \mathbb{R}$. The first estimate in~\eqref{eq:ACsdealphaalphapest} follows from
\begin{equation*}
0 \leq \inf_{w \in [-1,1]} g(w) \leq g \left( \Phi^{-1}(r) \right) \leq \sup_{w \in [-1,1]} g(w),
\end{equation*}
for every $r \in \mathbb{R}$, and the second estimate in~\eqref{eq:ACsdealphaalphapest} follows from the property that $\Phi^{-1}(r) \in [-1,1]$ for every $r \in \mathbb{R}$. We next compute $A$ in~\eqref{eq:Adef}
\begin{equation*}
A(r) = \left( 1 + \frac{1}{\lambda^{2}} \right) \int_{0}^{r} \Phi^{-1}(w) \diff w = \left( 1 + \frac{1}{\lambda^{2}} \right) \left( \log \left( (1+x_{0})e^{2r} + (1-x_{0}) \right) - r - \log(2) \right)
\end{equation*}
and use that the target density $h$ in~\eqref{eq:hdef} is proportional to 
\begin{equation}
h(r) \propto \exp \left( A(r) - \frac{r^{2}}{2 T_{\lambda}} \right) \propto \left( (1+x_{0}) e^{r} + (1-x_{0})e^{-r} \right)^{1 + \frac{1}{\lambda^{2}}} e^{-\frac{r^{2}}{2 T_{\lambda}}}.
\end{equation}
As we cannot directly sample from $h$, we use an accept-reject strategy to sample for $h$. Keeping in mind that we let $\frac{1}{\lambda^{2}} \to 0$, we use the proposal density $H_{\nu}$ given by
\begin{equation*}
H_{\nu}(r) \propto \left( (1+x_{0}) e^{r} + (1-x_{0}) e^{-r} \right) e^{- \frac{\nu r^{2}}{2 T_{\lambda}}},
\end{equation*}
for some $\nu > 0$ to control the tails of $H_{\nu}$ in relation to $h$. Owing to the factors $e^{-\frac{r^{2}}{2 T_{\lambda}}}$ in $h$ and  $e^{- \frac{\nu r^{2}}{2 T_{\lambda}}}$ in $H_{\nu}$, $h$ and $H_{\nu}$ are probability densities and there exists normalising constants $c_{h}$ and $c_{H_{\nu}}$ such that
\begin{equation*}
h(r) = \frac{1}{c_{h}} \left( (1+x_{0}) e^{r} + (1-x_{0})e^{-r} \right)^{1 + \frac{1}{\lambda^{2}}} e^{-\frac{r^{2}}{2 T_{\lambda}}},\ r \in \mathbb{R},
\end{equation*}
and 
\begin{equation*}
H_{\nu}(r)= \frac{1}{c_{H_{\nu}}} \left( (1+x_{0}) e^{r} + (1-x_{0}) e^{-r} \right) e^{- \frac{\nu r^{2}}{2 T_{\lambda}}},\ r \in \mathbb{R}.
\end{equation*}
Moreover, the constant $c_{H_{\nu}}$ can be computed explicitly
\begin{equation*}
c_{H_{\nu}} = 2 \sqrt{2 \pi T_{\lambda}/\nu} e^{\frac{T_{\lambda}}{2 \nu}},
\end{equation*}
using completion of squares. We next describe how to sample from $H_{\nu}$. Let us decompose $H_{\nu}$ into a sum of two densities $H_{\nu} = \frac{1+x_{0}}{2} H_{\nu}^{1} + \frac{1-x_{0}}{2} H_{\nu}^{2}$ with
\begin{equation*}
H_{\nu}^{1}(r) = \frac{1}{c_{H_{\nu}^{1}}} e^{r} e^{- \frac{r^{2}}{2 T_{\lambda}/\nu}} = \frac{1}{c_{H_{\nu}^{1}}} e^{\frac{T_{\lambda}}{2 \nu}} e^{- \frac{1}{2 T_{\lambda}/\nu} \left( r - \frac{T_{\lambda}}{\nu} \right)^{2}},\ r \in \mathbb{R},
\end{equation*}
and
\begin{equation*}
H_{\nu}^{2}(r) = \frac{1}{c_{H_{\nu}^{2}}} e^{-r} e^{- \frac{x^{2}}{2 T_{\lambda}/\nu}} = \frac{1}{c_{H_{\nu}^{2}}} e^{\frac{T_{\lambda}}{2 \nu}} e^{- \frac{1}{2 T_{\lambda}/\nu} \left( r + \frac{T_{\lambda}}{\nu} \right)^{2}},\ r \in \mathbb{R},
\end{equation*}
where
\begin{equation*}
c_{H_{\nu}^{1}}=c_{H_{\nu}^{2}} = \sqrt{2 \pi T_{\lambda}/\nu} e^{\frac{T_{\lambda}}{2 \nu}}.
\end{equation*}
In other words, $H_{\nu}^{1}$ is a normal distribution with mean $\mu = \frac{T_{\lambda}}{\nu}$ and variance $\sigma^{2} = T_{\lambda}/\nu$ and $H_{\nu}^{2}$ is a normal distribution with mean $\mu = - \frac{T_{\lambda}}{\nu}$ and variance $\sigma^{2} = T_{\lambda}/\nu$. Thus, we can sample from $H_{\nu}$ by sampling from $H_{\nu}^{1}$ with probability $\frac{1+x_{0}}{2}$ and sampling from $H_{\nu}^{2}$ with probability $\frac{1-x_{0}}{2}$. The next step is to compute an accept-reject sampling constant $K$ (uniform bound) such that $\sup_{r \in \mathbb{R}} \frac{h(r)}{H_{\nu}(r)} \leq K$. Observe that this is one reason we introduce the parameter $\nu$ to control the tails of $H_{\nu}$. We start with expressing
\begin{equation*}
\frac{h(r)}{H_{\nu}(r)} = \frac{c_{H_{\nu}}}{c_{h}} \left( \widetilde{q}(r) \right)^{\frac{1}{\lambda^{2}}},\ r \in \mathbb{R},
\end{equation*}
where
\begin{equation*}
\widetilde{q}(r) = (1+x_{0}) e^{r} e^{- \frac{\lambda^{2} (1-\nu) r^{2}}{2 T_{\lambda}}} + (1-x_{0}) e^{-r} e^{- \frac{\lambda^{2} (1-\nu) r^{2}}{2 T_{\lambda}}} = (1+x_{0}) \widetilde{q}_{1}(r) + (1-x_{0}) \widetilde{q}_{2}(r),
\end{equation*}
for $r \in \mathbb{R}$. With the goal of computing the uniform bound $K$, we re-write and bound $\widetilde{q}_{1}$ as
\begin{equation*}
\widetilde{q}_{1}(r) = e^{- \frac{\lambda^{2} (1-\nu)}{2 T_{\lambda}} \left( r - \frac{T_{\lambda}}{\lambda^{2} (1-\nu)}\right)^{2}} e^{\frac{(1-\nu) T_{\lambda}}{2 \lambda^{2} (1-\nu)^{2}}} \leq e^{\frac{ T_{\lambda}}{2 \lambda^{2} (1-\nu)}},\ r \in \mathbb{R},
\end{equation*}
and, similarly, we can re-write and bound $\widetilde{q}_{2}$ as
\begin{equation*}
\widetilde{q}_{2}(r) = e^{- \frac{\lambda^{2} (1-\nu)}{2 T_{\lambda}} \left( r + \frac{T_{\lambda}}{\lambda^{2} (1-\nu)}\right)^{2}} e^{\frac{(1-\nu) T_{\lambda}}{2 \lambda^{2} (1-\nu)^{2}}} \leq e^{\frac{T_{\lambda}}{2 \lambda^{2} (1-\nu)}},\ r \in \mathbb{R}.
\end{equation*}
Thus, we can estimate
\begin{equation*}
\sup_{r \in \mathbb{R}} \widetilde{q}(r) \leq 2 e^{\frac{T}{2 (1-\nu)}}
\end{equation*}
and therefore also
\begin{equation*}
\sup_{r \in \mathbb{R}} \frac{h(r)}{H_{\nu}(r)} \leq \frac{c_{H_{\nu}}}{c_{h}} 2^{\frac{1}{\lambda^{2}}} e^{\frac{T}{2 \lambda^{2} (1-\nu)}}.
\end{equation*}
To bound the quotient of the normalising constants $\frac{c_{H_{\nu}}}{c_{h}}$, we note that $h \to H_{\nu=1}$ as $\lambda \to \infty$ and we can therefore use the estimate
\begin{equation*}
c_{h} \geq \frac{1}{2} c_{H_{\nu=1}} = \sqrt{2 \pi T_{\lambda}} e^{\frac{T_{\lambda}}{2}},
\end{equation*}
which implies that
\begin{equation*}
\frac{c_{H_{\nu}}}{c_{h}} \leq 2 \sqrt{\frac{1}{\nu}} e^{\frac{(1-\nu) T_{\lambda}}{2 \nu}}.
\end{equation*}
We hence arrive at the following equality for an accept-reject sampling constant $K$
\begin{equation*}
\sup_{r \in \mathbb{R}} \frac{h(r)}{H_{\nu}(r)} \leq 2 \sqrt{\frac{1}{\nu}} e^{\frac{(1-\nu) T_{\lambda}}{2 \nu}} 2^{\frac{1}{\lambda^{2}}} e^{\frac{T}{2 \lambda^{2} (1-\nu)}} = K.
\end{equation*}

\subsection{Stochastic Nagumo equation}\label{app:Nagumo}
For the considered stochastic Nagumo equation in~\eqref{eq:Nagumo}, we first transform the SPDE in~\eqref{eq:Nagumo} into an SPDE with homogeneous Dirichlet boundary conditions. By letting
\begin{equation*}
v(t) = u(t) - 1/2,\ \forall t \in [0,T],
\end{equation*}
we have that $v(t) \in [-1/2,1/2],\ \forall t \in [0,T]$, and that $v$ satisfies the SPDE
\begin{align*}
\diff v(t) &= \Delta v(t) \diff t + \frac{1}{8} \left( 1 + 2 v(t) \right) \left( 1 - 2 v(t) \right) \left( 1 - 2 \gamma + 2 v(t) \right) \diff t \\ &+ \frac{1}{4} \left( 1 + 2 v(t) \right) \left( 1 - 2 v(t) \right) \diff W(t).
\end{align*}
Then, by letting $z(t) = 2 v(t),\ t \in [0,T]$, we have that $z(t) \in [-1,1],\ \forall t \in [0,T]$, and that $z$ satisfies the SPDE
\begin{align*}
\diff z(t) &= \Delta z(t) \diff t + \frac{1}{4} \left( 1 + z(t) \right) \left( 1 - z(t) \right) \left( 1 - 2 \gamma + z(t) \right) \diff t \\ &+ \frac{1}{2} \left( 1 + z(t) \right) \left( 1 - z(t) \right) \diff W(t)
\end{align*}
with initial value $z(0) = 2 v(0) = 2 (u(0) - 1/2)$. Moreover, $z$ satisfies homogeneous Dirichlet boundary conditions. In other words, $z$ satisfies the SPDE in~\eqref{intro:SPDE} with $f$ and $g$ given by
\begin{equation*}
f(r) = \frac{1}{4} \left( 1 + r \right) \left( 1 - r \right) \left( 1 - 2 \gamma + r \right)
\end{equation*}
and
\begin{equation*}
g(r) = \frac{1}{2} \left( 1 + r \right) \left( 1 - r \right)
\end{equation*}
subject to homogeneous Dirichlet boundary conditions. We apply the $\LTE$ scheme to approximate $z$ and then transform back to $u$.

We next provide the remaining details for the exact simulation of the Nagumo type SDE given by
\begin{equation*}
\left\lbrace
\begin{aligned}
& \diff X(t) = \frac{1}{4} \left( 1 + X(t) \right) \left( 1 - X(t) \right) \left( 1 - 2 \gamma + X(t) \right) \diff t + \frac{\lambda}{2} \left( 1 - X(t)^2 \right) \diff B(t),\ t \in (0,T] \\ 
& X(0) = x_{0} \in (-1,1),
\end{aligned}
\right.
\end{equation*}
that is needed for the $\LTE$ scheme applied to the stochastic Nagumo SPDE in~\eqref{eq:Nagumo}. Note that, for the same reason as in Section~\ref{sec:exactSim}, we scale the noise with a parameter $\lambda>0$. For the implementation of the $\LTE$ scheme we use $\lambda = \sqrt{N}$ where $N$ is the finite difference discretisation parameter. To this end, we first apply a deterministic time-change (see Section~\ref{sec:app-TC}) to transfer the constant $\widetilde{\lambda} = \frac{\lambda}{2}$ into the drift. The process $\widetilde{X}(t) = X(\widetilde{\lambda}^{-2} t)$ satisfies (up to changing the driving noise $B$) the following Itô SDEs
\begin{equation*}
\left\lbrace
\begin{aligned}
& \diff \widetilde{X}(t) = \frac{1}{4} \widetilde{\lambda}^{-2} \left( 1 + \widetilde{X}(t) \right) \left( 1 - \widetilde{X}(t) \right) \left( 1 - 2 \gamma + \widetilde{X}(t) \right) \diff t + \left( 1 - \widetilde{X}(t)^2 \right) \diff B(t),\ t \in \left(0,T_{\widetilde{\lambda}} \right] \\ 
& \widetilde{X}(0) = x_{0} \in (-1,1),
\end{aligned}
\right.
\end{equation*}
in the weak sense. In the following, we will use both $\lambda$ and $\widetilde{\lambda}$, whichever is most convenient. Since $\widetilde{\lambda} = \frac{\lambda}{2}$, which in turn implies that $\widetilde{\lambda}^{-2} = 4 \lambda^{-2}$, we have that $\widetilde{X}$ satisfies
\begin{equation}\label{eq:NagumoSDETC}
\left\lbrace
\begin{aligned}
& \diff \widetilde{X}(t) = \lambda^{-2} \left( 1 + \widetilde{X}(t) \right) \left( 1 - \widetilde{X}(t) \right) \left( 1 - 2 \gamma + \widetilde{X}(t) \right) \diff t + \left( 1 - \widetilde{X}(t)^2 \right) \diff B(t),\ t \in \left(0,T_{\widetilde{\lambda}} \right] \\ 
& \widetilde{X}(0) = x_{0} \in (-1,1),
\end{aligned}
\right.
\end{equation}
in the weak sense, where $T_{\widetilde{\lambda}} = \widetilde{\lambda}^{2} T$. Note that the above time-changed SDE with $a=1/2$ reduces to the time-changed Allen--Cahn type SDE in~\eqref{eq:ACsdeTC}, but with a different end time point. However, the Nagumo equation is usually only defined for $\gamma \in (0,1/2)$. Recall that sampling $X(\Delta t)$ starting from $X(0) = x_{0}$ is equivalent to sampling $\widetilde{X}(\widetilde{\lambda}^{2} \Delta t)$ starting from $\widetilde{X}(0) = x_{0}$. With the goal of applying exact simulation (see Section~\ref{sec:exactSim}), we compute the Lamperti transform of~\eqref{eq:NagumoSDETC} (see equation~\ref{eq:LampTrans})
\begin{equation*}
\Phi(r) = \int_{x_{0}}^{r} \frac{1}{1-w^2} \diff w = \frac{1}{2} \left( \log(1+r) - \log(1-r) - \log(1+x_{0}) + \log(1-x_{0}) \right),\ \forall r \in \D,
\end{equation*}
and the inverse of the Lamperti transform
\begin{equation*}
\Phi^{-1}(r) = \frac{(1+x_{0})e^{2r} - (1-x_{0})}{(1+x_{0})e^{2r} + (1-x_{0})},\ \forall r \in \mathbb{R}.
\end{equation*}
The transformed process $Y(t) = \Phi (\widetilde{X}(t))$ then satisfies
\begin{equation*}
\left\lbrace
\begin{aligned}
& \diff Y(t) = \alpha(Y(t)) \diff t + \diff B(t),\ t \in \left(0,T_{\widetilde{\lambda}} \right], \\ 
& Y(0) = \Phi(x_{0}) \in \mathbb{R},
\end{aligned}
\right.
\end{equation*}
with
\begin{equation*}
\alpha(r) = \left( 1 + \frac{1}{\lambda^{2}} \right) \Phi^{-1}(r) + \frac{1}{\lambda^{2}} (1- 2 \gamma),\ r \in \mathbb{R}.
\end{equation*}
By direct computations, we have the following expression for $\alpha'$
\begin{equation*}
\alpha'(r) = \left( 1 + \frac{1}{\lambda^{2}} \right) \frac{\diff}{\diff r} \Phi^{-1}(r) = \left( 1 + \frac{1}{\lambda^{2}} \right) g \left( \Phi^{-1}(r) \right) = \left( 1 + \frac{1}{\lambda^{2}} \right) \frac{4 (1-x_{0})(1+x_{0}) e^{2r}}{\left( (1+x_{0})e^{2r} + (1-x_{0}) \right)^{2}},
\end{equation*}
for $r \in \mathbb{R}$, where we used~\eqref{eq:diffPhiinv}, and for $\phi$
\begin{equation*}
\phi(r) = \frac{1}{2} \alpha'(r) + \frac{1}{2} \alpha(r)^{2} - k_{1},\ r \in \mathbb{R}.
\end{equation*}
In this case, we have that $k_{1} = 0$ and that 
\begin{equation*}
k_{2} = \frac{1}{2} \left( 1 + \frac{1}{\lambda^{2}} \right) + \frac{1}{2} \max \left( \left( 1 + \frac{2-2 \gamma}{\lambda^{2}} \right)^{2}, \left( 1 + \frac{2 \gamma}{\lambda^{2}} \right)^{2} \right).
\end{equation*}
This follows from the estimates
\begin{equation*}
0 = \left( 1 + \frac{1}{\lambda^{2}} \right) \min_{w \in [-1,1]} g(w) \leq \alpha'(r) \leq \left( 1 + \frac{1}{\lambda^{2}} \right) \max_{w \in [-1,1]} g(w) = 1 + \frac{1}{\lambda^{2}},
\end{equation*}
for every $r \in \mathbb{R}$, and
\begin{equation*}
0 \leq \alpha^{2}(r) \leq \max \left( \left( 1 + \frac{2-2 \gamma}{\lambda^{2}} \right)^{2}, \left( 1 + \frac{2 \gamma}{\lambda^{2}} \right)^{2} \right),
\end{equation*}
for every $r \in \mathbb{R}$. We next compute $A$ in~\eqref{eq:Adef}
\begin{align*}
A(r) &= \left( 1 + \frac{1}{\lambda^{2}} \right) \int_{0}^{r} \Phi^{-1}(w) \diff w + \frac{r}{\lambda^{2}} (1-2 \gamma) \\ &= \left( 1 + \frac{1}{\lambda^{2}} \right) \left( \log \left( (1+x_{0})e^{2r} + (1-x_{0}) \right) - r - \log(2) \right) + \frac{r}{\lambda^{2}} (1-2 \gamma),
\end{align*}
for $r \in \mathbb{R}$, and then use that the target density $h$ in~\eqref{eq:hdef} is proportional to 
\begin{equation*}
h(r) \propto \exp \left( A(r) - \frac{r^{2}}{2 T_{\widetilde{\lambda}}} \right) \propto \left( (1+x_{0}) e^{r} + (1-x_{0})e^{-r} \right)^{1 + \frac{1}{\lambda^{2}}} e^{-\frac{r^{2}}{2 T_{\widetilde{\lambda}}}} e^{\frac{r}{\lambda^{2}}(1-2 \gamma)},
\end{equation*}
for $r \in \mathbb{R}$, where we recall that $T_{\widetilde{\lambda}} = \widetilde{\lambda}^{2} T$. As we cannot directly sample from $h$, we use an accept-reject strategy to sample for $h$. As we will let $\frac{1}{\lambda^{2}} \to 0$, we use the proposal density $H_{\nu}$ given by
\begin{equation*}
H_{\nu}(r) \propto \left( (1+x_{0}) e^{r} + (1-x_{0}) e^{-r} \right) e^{- \frac{\nu r^{2}}{2 T_{\widetilde{\lambda}}}} e^{\frac{r}{\lambda^{2}}(1-2 \gamma)},
\end{equation*}
for some $\nu > 0$ to control the tails of $H_{\nu}$ in relation to the tails of $h$. Owing to the factors $e^{-\frac{r^{2}}{2 T_{\widetilde{\lambda}}}}$ in $h$ and  $e^{- \frac{\nu r^{2}}{2 T_{\widetilde{\lambda}}}}$ in $H_{\nu}$, $h$ and $H_{\nu}$ are probability densities and there exists normalising constants $c_{h}$ and $c_{H_{\nu}}$ such that
\begin{equation*}
h(r) = \frac{1}{c_{h}} \left( (1+x_{0}) e^{r} + (1-x_{0})e^{-r} \right)^{1 + \frac{1}{\lambda^{2}}} e^{-\frac{r^{2}}{2 T_{\widetilde{\lambda}}}} e^{\frac{r}{\lambda^{2}}(1-2 \gamma)},\ r \in \mathbb{R},
\end{equation*}
and 
\begin{equation*}
H_{\nu}(r) = \frac{1}{c_{H_{\nu}}} \left( (1+x_{0}) e^{r} + (1-x_{0}) e^{-r} \right) e^{- \frac{\nu r^{2}}{2 T_{\widetilde{\lambda}}}} e^{\frac{r}{\lambda^{2}}(1-2 \gamma)},\ r \in \mathbb{R},
\end{equation*}
Moreover, the constant $c_{H_{\nu}}$ can be computed explicitly
\begin{equation*}
c_{H_{\nu}} = (1+x_{0}) c_{H_{\nu}^{1}} + (1-x_{0}) c_{H_{\nu}^{2}},
\end{equation*}
where
\begin{equation*}
c_{H_{\nu}^{1}} = \sqrt{\frac{T_{\lambda} \pi}{2 \nu}} e^{\frac{T_{\lambda}}{8 \nu} (1 + \frac{1}{\lambda^{2}}(1-2 \gamma))^{2}}
\end{equation*}
and
\begin{equation*}
c_{H_{\nu}^{2}} = \sqrt{\frac{T_{\lambda} \pi}{2 \nu}} e^{\frac{T_{\lambda}}{8 \nu} (1 - \frac{1}{\lambda^{2}}(1-2 \gamma))^{2}}.
\end{equation*}
Note that $c_{H_{\nu}^{1}}$ and $c_{H_{\nu}^{2}}$ are the normalising constants for $H_{\nu}^{1}$ and $H_{\nu}^{2}$, respectively, defined below. The next issue is to sample from $H_{\nu}$, which we do as follows. Let us decompose $H_{\nu}$ into a sum of two densities $H_{\nu} = \frac{1+x_{0}}{2} H_{\nu}^{1} + \frac{1-x_{0}}{2} H_{\nu}^{2}$ with
\begin{equation*}
H_{\nu}^{1}(r) = \frac{1}{c_{H_{\nu}^{1}}} e^{r} e^{- \frac{\nu x^{2}}{2 T_{\widetilde{\lambda}}}} e^{\frac{r}{\lambda^{2}}(1-2 \gamma)} = \frac{1}{c_{H_{\nu}^{1}}} e^{\frac{T_{\lambda}}{8 \nu} \left( 1 + \frac{1}{\lambda^{2}} \left( 1 - 2 \gamma \right) \right)^{2}} e^{- \frac{1}{T_{\lambda}/(2 \nu)} \left( r - \frac{T_{\lambda}}{4 \nu} \left( 1 + \frac{1}{\lambda^{2}}(1-2 \gamma) \right) \right)^{2}},\ r \in \mathbb{R},
\end{equation*}
and
\begin{equation*}
H_{\nu}^{2}(r) = \frac{1}{c_{H_{\nu}^{2}}} e^{-r} e^{- \frac{\nu r^{2}}{2 T_{\widetilde{\lambda}}}} e^{\frac{r}{\lambda^{2}}(1-2 \gamma)} = \frac{1}{c_{H_{\nu}^{2}}}  e^{\frac{T_{\lambda}}{8 \nu} \left( 1 - \frac{1}{\lambda^{2}} \left( 1 - 2 \gamma \right) \right)^{2}} e^{- \frac{1}{ T_{\lambda}/(2 \nu)} \left( r + \frac{T_{\lambda}}{4 \nu} \left( 1 - \frac{1}{\lambda^{2}}(1-2 \gamma) \right) \right)^{2}},\ r \in \mathbb{R}.
\end{equation*}
In other words, $H_{\nu}^{1}$ is a normal distribution with mean $\mu = \frac{T_{\lambda}}{4 \nu} \left( 1 + \frac{1}{\lambda^{2}} (1-2 \gamma) \right)$ and variance $\sigma^{2} = T_{\lambda}/(4 \nu)$ and $H_{\nu}^{2}$ is a normal distribution with mean $\mu = - \frac{T_{\lambda}}{4 \nu} \left( 1 - \frac{1}{\lambda^{2}} (1-2 \gamma) \right)$ and variance $\sigma^{2} = T_{\lambda}/(4 \nu)$. Thus, we can sample from $H_{\nu}$ by sampling from $H_{\nu}^{1}$ with probability $\frac{1+x_{0}}{2}$ and sampling from $H_{\nu}^{2}$ with probability $\frac{1-x_{0}}{2}$. We next compute an accept-reject sampling constant $K$ (uniform bound) such that $\sup_{r \in \mathbb{R}} \frac{h(r)}{H_{\nu}(r)} \leq K$. Note that this is one reason why we introduced the parameter $\nu>0$ to control the tails of $H_{\nu}$. We start with expressing
\begin{equation*}
\frac{h(r)}{H_{\nu}(r)} = \frac{c_{H_{\nu}}}{c_{h}} \left( \widetilde{q}(r) \right)^{\frac{1}{\lambda^{2}}},\ r \in \mathbb{R},
\end{equation*}
where
\begin{equation*}
\widetilde{q}(r) = (1+x_{0}) e^{r} e^{- \frac{2 (1-\nu) r^{2}}{T}} + (1-x_{0}) e^{-r} e^{- \frac{2 (1-\nu) r^{2}}{T}} = (1+x_{0}) \widetilde{q}_{1}(r) + (1-x_{0}) \widetilde{q}_{2}(r),\ r \in \mathbb{R},
\end{equation*}
We can re-write and bound $\widetilde{q}_{1}$ as
\begin{equation*}
\widetilde{q}_{1}(r) = e^{- \frac{\lambda^{2} (1-\nu)}{2 T_{\widetilde{\lambda}}} \left( r - \frac{T_{\widetilde{\lambda}}}{\lambda^{2} (1-\nu)}\right)^{2}} e^{\frac{(1-\nu) T_{\widetilde{\lambda}}}{2 \lambda^{2} (1-\nu)^{2}}} \leq e^{\frac{ T_{\widetilde{\lambda}}}{2 \lambda^{2} (1-\nu)}},\ r \in \mathbb{R},
\end{equation*}
and, similarly, we can re-write and bound $\widetilde{q}_{2}$ as
\begin{equation*}
\widetilde{q}_{2}(r) = e^{- \frac{\lambda^{2} (1-\nu)}{2 T_{\widetilde{\lambda}}} \left( r + \frac{T_{\widetilde{\lambda}}}{\lambda^{2} (1-\nu)}\right)^{2}} e^{\frac{(1-\nu) T_{\widetilde{\lambda}}}{2 \lambda^{2} (1-\nu)^{2}}} \leq e^{\frac{ T_{\widetilde{\lambda}}}{2 \lambda^{2} (1-\nu)}},\ r \in \mathbb{R}.
\end{equation*}
Thus,
\begin{equation*}
\sup_{r \in \mathbb{R}} \widetilde{q}(r) \leq 2 e^{\frac{T_{\widetilde{\lambda}}}{2 \lambda^{2} (1-\nu)}} = 2 e^{\frac{T}{8 (1-\nu)}}
\end{equation*}
and therefore also
\begin{equation*}
\sup_{r \in \mathbb{R}} \frac{h(r)}{H_{\nu}(r)} \leq \frac{c_{H_{\nu}}}{c_{h}} 2^{\frac{1}{\lambda^{2}}} e^{\frac{T}{8 \lambda^{2} (1-\nu)}}.
\end{equation*}
To bound the quotient of the constants $\frac{c_{H_{\nu}}}{c_{h}}$, we note that $h \to H_{\nu=1}$ as $\lambda \to \infty$ and we can estimate
\begin{equation*}
c_{h} \geq \frac{1}{2} c_{H_{\nu=1}} = \frac{1+x_{0}}{2} c_{H_{\nu=1}^{1}} + \frac{1-x_{0}}{2} c_{H_{\nu=1}^{2}},
\end{equation*}
which, in turn, implies that
\begin{equation*}
\frac{c_{H_{\nu}}}{c_{h}} \leq 2 \frac{c_{H_{\nu}}}{c_{H_{\nu=1}}}.
\end{equation*}
Thus, finally, we arrive at
\begin{equation*}
\sup_{r \in \mathbb{R}} \frac{h(r)}{H_{\nu}(r)} \leq \frac{c_{H_{\nu}}}{c_{H_{\nu=1}}} 2^{1+\frac{1}{\lambda^{2}}} e^{\frac{T}{8 \lambda^{2} (1-\nu)}} = K
\end{equation*}
and
\begin{equation*}
\frac{c_{H_{\nu}}}{c_{H_{\nu=1}}} = \frac{(1+x_{0}) c_{H_{\nu}^{1}}+ (1-x_{0}) c_{H_{\nu}^{2}}}{(1+x_{0}) c_{H_{\nu=1}^{1}}+ (1-x_{0}) c_{H_{\nu=1}^{2}}}
\end{equation*}

\subsection{SIS SPDE}\label{app:SIS}
We first transform the SIS SPDE in~\eqref{eq:SIS} into an SPDE with homogeneous Dirichlet boundary conditions. To this end, if we let 
\begin{equation*}
v(t) = u(t) - 1/2,\ \forall t \in [0,T],
\end{equation*}
then $v(t) \in [-1/2,1/2]$,for $t \in [0,T]$, and $v$ satisfies the SPDE
\begin{align*}
\diff v(t) &= \Delta v(t) \diff t + \left( 1/2 + v(t) \right) \left( 1/2 - v(t) \right) \diff t \\ &+ \left( 1/2 + v(t) \right) \left( 1/2 - v(t) \right) \diff W(t).
\end{align*}
Second, if we also let $z(t) = 2 v(t),\ \forall t \in [0,T]$, then $z(t) \in [-1,1],\ \forall t \in [0,T]$, and $z$ satisfies the SPDE
\begin{equation*}
\diff z(t) = \Delta z(t) \diff t + \frac{1}{2} \left( 1 + z(t) \right) \left( 1 - z(t) \right) \diff t + \frac{1}{2} \left( 1 + z(t) \right) \left( 1 - z(t) \right) \diff W(t)
\end{equation*}
with initial value $z(0) = 2 v(0) = 2 \left( u(0) - 1/2 \right)$. Moreover, $z$ satisfies homogeneous Dirichlet boundary conditions. Hence, $z$ satisfies the SPDE in~\eqref{intro:SPDE} with
\begin{equation*}
f(r) = \frac{1}{2} (1+r) (1-r)
\end{equation*}
and
\begin{equation*}
g(r) = \frac{1}{2} (1+r)(1-r)
\end{equation*}
subject to homogeneous Dirichlet boundary conditions. We apply the $\LTE$ scheme to approximate $z$ and the transform back to $u$.

We now provide the remaining details for the exact simulation of the SIS SDE given by
\begin{equation*}
\left\lbrace
\begin{aligned}
& \diff X(t) = \frac{1}{2} \left( 1 + X(t) \right) \left( 1 - X(t) \right) + \frac{\lambda}{2} \left( 1 + X(t) \right) \left( 1 - X(t) \right) \diff B(t),\ t \in (0,T] \\ 
& X(0) = x_{0} \in (-1,1),
\end{aligned}
\right.
\end{equation*}
that is needed for the $\LTE$ scheme applied to the SIS SPDE in~\eqref{eq:SIS}. As usual, we scale the noise with a parameter $\lambda>0$ and we are interested in the case $\lambda = \sqrt{N}$ where $N$ is the finite difference discretisation parameter. We first apply a deterministic time-change (see Section~\ref{sec:app-TC}) to transfer the constant $\widetilde{\lambda} = \frac{\lambda}{2}$ into the drift. The process $\widetilde{X}(t) = X(\widetilde{\lambda}^{-2} t)$ then satisfies (up to changing the driving noise $B$)
\begin{equation}\label{eq:SDEsdeTC}
\left\lbrace
\begin{aligned}
& \diff \widetilde{X}(t) = \frac{1}{2} \widetilde{\lambda}^{-2} \left( 1 + \widetilde{X}(t) \right) \left( 1 - \widetilde{X}(t) \right) \diff t + \left( 1 + \widetilde{X}(t) \right) \left( 1 - \widetilde{X}(t) \right) \diff B(t),\ t \in \left(0,T_{\widetilde{\lambda}} \right], \\ 
& \widetilde{X}(0) = x_{0} \in (-1,1),
\end{aligned}
\right.
\end{equation}
in the weak sense, where $T_{\widetilde{\lambda}} = \widetilde{\lambda}^{2} T$. We recall that sampling $X(\Delta t)$ starting from $X(0) = x_{0}$ is equivalent to sampling $\widetilde{X}(\widetilde{\lambda}^{2} \Delta t)$ starting from $\widetilde{X}(0) = x_{0}$ and that we are interested in the case $T = \Delta t$ and $T_{\widetilde{\lambda}} = \widetilde{\lambda}^{2} \Delta t = \frac{\lambda^{2}}{4} \Delta t$. To the end of applying exact simulation, we compute the Lamperti transform of~\eqref{eq:SDEsdeTC} (see equation~\eqref{eq:LampTrans})
\begin{equation*}
\Phi(r) = \int_{x_{0}}^{r} \frac{1}{1-w^2} \diff w = \frac{1}{2} \left( \log(1+r) - \log(1-r) - \log(1+x_{0}) + \log(1-x_{0}) \right),\ r \in \D,
\end{equation*}
and the inverse of the Lamperti transform
\begin{equation*}
\Phi^{-1}(r) = \frac{(1+x_{0})e^{2r} - (1-x_{0})}{(1+x_{0})e^{2r} + (1-x_{0})},\ r \in \mathbb{R}.
\end{equation*}
The transformed process $Y(t) = \Phi (\widetilde{X}(t))$ then satisfies
\begin{equation*}
\left\lbrace
\begin{aligned}
& \diff Y(t) = \alpha(Y(t)) \diff t + \diff B(t),\ t \in \left(0,T_{\widetilde{\lambda}} \right], \\ 
& Y(0) = \Phi(x_{0}) \in \mathbb{R},
\end{aligned}
\right.
\end{equation*}
with drift coefficient
\begin{equation*}
\alpha(r) = 2 \lambda^{-2} + \Phi^{-1}(r),\ \forall r \in \mathbb{R}.
\end{equation*}
We can compute $\alpha'$ by direct differentiation
\begin{equation*}
\alpha'(r) = \frac{\diff}{\diff r} \Phi^{-1}(r) = g \left( \Phi^{-1}(r) \right) = \frac{4 (1-x_{0})(1+x_{0}) e^{2r}}{\left( (1+x_{0})e^{2r} + (1-x_{0}) \right)^{2}},\ r \in \mathbb{R},
\end{equation*}
where we also used~\eqref{eq:diffPhiinv}, and we recall the definition of $\phi$
\begin{equation*}
\phi(r) = \frac{1}{2} \alpha'(r) + \frac{1}{2} \alpha(r)^{2} - k_{1},\ r \in \mathbb{R},
\end{equation*}
In this case, we have that $k_{1}=0$ and that
\begin{equation*}
k_{2} = \frac{1}{2} \left( 1 + \frac{2}{\lambda^{2}} \right)^{2} + \frac{1}{2}.
\end{equation*}
The above follows from the estimates
\begin{equation}\label{eq:SIS-alphaalphap}
0 \leq \alpha'(r) \leq 1,\ 0 \leq \alpha^{2}(r) \leq \left( 1 + \frac{2}{\lambda^{2}} \right)^{2},
\end{equation}
both for every $r \in \mathbb{R}$. The first inequality in~\eqref{eq:SIS-alphaalphap} follows from
\begin{equation*}
0 \leq \alpha'(r) = g(\Phi^{-1}(r)) \leq \max_{w \in [-1,1]} g(w) \leq 1,
\end{equation*}
for every $r \in \mathbb{R}$, and the second inequality in~\eqref{eq:SIS-alphaalphap} follows from
\begin{equation*}
0 \leq \alpha^{2}(r) = \left( 2 \lambda^{-2} + \Phi^{-1}(r) \right)^{2} \leq \left( 1 + \frac{2}{\lambda^{2}} \right)^{2},
\end{equation*}
for every $r \in \mathbb{R}$. We now compute $A$ in~\eqref{eq:Adef}
\begin{equation*}
A(r) = 2 \lambda^{-2} r + \int_{0}^{r} \Phi^{-1}(w) \diff w = 2 \lambda^{-2} e + \log \left( (1+x_{0})e^{2e} + (1-x_{0}) \right) - e - \log(2),
\end{equation*}
for $r \in \mathbb{R}$, and then use that the target density $h$ (see~\eqref{eq:hdef}) is proportional to 
\begin{equation}
h(r) \propto \exp \left( A(r) - \frac{r^{2}}{2 T_{\widetilde{\lambda}}} \right) \propto \left( (1+x_{0}) e^{r} + (1-x_{0})e^{-r} \right) e^{2 \lambda^{-2} r} e^{-\frac{r^{2}}{2 T_{\widetilde{\lambda}}}},
\end{equation}
for $r \in \mathbb{R}$. In this case, we can sample from $h$ (without applying an accept-reject strategy). We decompose $h$ as
\begin{equation}\label{eq:SIShexpr}
h(r) = \frac{1+x_{0}}{2} q_{1}(r) + \frac{1-x_{0}}{2} q_{2}(r),\ r \in \mathbb{R},
\end{equation}
where $q_{1}$ and $q_{2}$ are normal densities with parameters
\begin{equation*}
q_{1} \sim N \left(\mu = \frac{T_{\lambda}}{4} \left( 1 + \frac{2}{\lambda^{2}} \right),\ \sigma^{2} = \frac{T_{\lambda}}{4} \right)
\end{equation*}
\begin{equation*}
q_{2} \sim N \left(\mu = - \frac{T_{\lambda}}{4} \left( 1 - \frac{2}{\lambda^{2}} \right),\ \sigma^{2} = \frac{T_{\lambda}}{4} \right).
\end{equation*}
Note that the decomposition in~\eqref{eq:SIShexpr} follows from completion of squares.  Thus, we can sample from $h$ by sampling from $q_{1}$ with probability $\frac{1+x_{0}}{2}$ and from $q_{2}$ with probability $\frac{1-x_{0}}{2}$.
\end{appendix}

\section*{Acknowledgements}
This work is partially supported by the Swedish Research Council (VR) (David Cohen's project nr. $2018-04443$). The computations were enabled by resources provided by Chalmers e-Commons at Chalmers and by resources provided by the National Academic Infrastructure for Supercomputing in Sweden (NAISS), partially funded by the Swedish Research Council through grant agreement no. 2022-06725.

\bibliographystyle{abbrv}
\bibliography{Boundary-preserving_weak_approximation_for_some_semilinear_stochastic_partial_differential_equations.bib}

\end{sloppypar}
\end{document}